\newtheorem{proposition}[equation]{Proposition}
\newtheorem{theorem}[equation]{Theorem}
\newtheorem{lemma}[equation]{Lemma}
\newtheorem{corollary}[equation]{Corollary}
\newtheorem{assumption}[equation]{Assumption}
\newtheorem{remark}[equation]{Remark}
\numberwithin{equation}{section}
\providecommand{\meantmp}[2]{#1\langle{#2}#1\rangle}
\providecommand{\mean}[1]{\meantmp{}{#1}}
\providecommand{\jumptmp}[2]{#1\llbracket{#2}#1\rrbracket}
\providecommand{\jump}[1]{\jumptmp{}{#1}}
\providecommand{\avgtmp}[2]{#1\{{#2}#1\}}
\providecommand{\avg}[1]{\avgtmp{}{#1}}
\providecommand{\bigavg}[1]{\avgtmp{\big}{#1}}
\providecommand{\Rdn}{{\setR^{d \times n}}}
\providecommand{\flux}[1]{{\widehat{#1}}}
\providecommand{\nablaDG}{\boldsymbol{\mathcal{G}}_h^k}
\providecommand{\DG}{{\rm DG}}
\providecommand{\SZ}{{\rm SZ}} 
\providecommand{\PiDG}{{\Uppi_{h}^{k}}}
\providecommand{\Pia}{{\Uppi_{h}^{0}}}
\providecommand{\Pian}{{\Uppi_{h_n}^{0}}}
\providecommand{\PiSZ}{{\Uppi^k_{\SZ}}}
\providecommand{\Wz}{\ensuremath{W_{\Gamma_D}}}
\providecommand{\Xhk}{X_h^k}
\providecommand{\Vhk}{U_h^k}
\providecommand\AAA{\boldsymbol{\mathcal{A}}}
\providecommand{\aaa}{\avg{\abs{\Pia \bL_h}}}
\providecommand{\aaan}{\avg{\abs{\Pian \bL_{h_n}}}}
\providecommand{\aaal}{\avg{\abs{\Pia (\nablaDG \bfu_h +\Rhk\bfu_D^*)}}}
\providecommand{\Lh}{{\nablaDG \bfu_h +\Rhk\bfu_D^*}}
\providecommand{\divo}{\divergence}
\newcommand{\Ghk}{\boldsymbol{\mathcal{G}}_h^k}
\newcommand{\Rhk}{\boldsymbol{\mathcal{R}}_h^k}
\newcommand{\WDG}{W^{1,p}(\mathcal{T}_h)}
\newcommand{\WDGpsi}{W^{1,\psi}(\mathcal{T}_h)}
\newcommand{\WDGphi}{W^{1,\phi}(\mathcal{T}_h)}
\newcommand{\WDGpsiD}{W^{1,\psi}_{\Gamma_D}(\mathcal{T}_h)}
\begin{document}

\title[LDG approximation for systems with Orlicz-structure]{{ Convergence
  analysis of a Local Discontinuous Galerkin approximation for
  nonlinear systems with balanced Orlicz-structure}}

\author{A.~Kaltenbach,  M.~\Ruzicka}

\begin{abstract}
 In this paper, we investigate a Local Discontinuous Galerkin~(LDG) approximation for systems with balanced Orlicz-structure. We propose a~new~numerical flux, which yields optimal convergence
rates for linear~ansatz~functions. In {particular}, our approach yields a
unified treatment for problems with $(p,\delta)$-structure for arbitrary $p
\in (1,\infty)$ and $\delta\ge 0$.
\end{abstract}


\maketitle

\section{Introduction}\label{intro}
\thispagestyle{empty}
We consider the numerical approximation of the nonlinear 
system
\begin{equation}
  \label{eq:p-lap}
  \begin{aligned}
    -\divo \AAA(\nabla\bu)&=\bfg -\divo \bfG \qquad&&\text{in }\Omega\,,
    \\[-0.5mm]
    \bu&=\bfu_D \qquad&&\text{on }\Gamma_D\,,
    \\[-0.5mm]
    \AAA(\nabla\bu) \bfn &= \bfa_N &&\text{on } \Gamma_N\,,
  \end{aligned}
\end{equation}
using a Local Discontinuous Galerkin (LDG) scheme.  More~precisely,~for~given~data $\bfg$, $\bfG$, $\bfu_D$ and $\bfa_N$, we seek a
vector field $\bu=(u_1,\ldots, u_d)^\top:\Omega\to \setR^d$ solving \eqref{eq:p-lap}. Here,
$\Omega\subseteq\setR^n$, $n \ge 2$, is a polyhedral, bounded
domain~with~Lipschitz~continuous~boundary $\partial \Omega$, \hspace*{-0.1mm}which \hspace*{-0.1mm}is
\hspace*{-0.1mm}disjointly \hspace*{-0.1mm}divided \hspace*{-0.1mm}into \hspace*{-0.1mm}a \hspace*{-0.1mm}Dirichlet~\hspace*{-0.1mm}part~\hspace*{-0.1mm}$\Gamma_D$,~\hspace*{-0.1mm}where~\hspace*{-0.1mm}we~\hspace*{-0.1mm}\mbox{assume}~\hspace*{-0.1mm}that~\hspace*{-0.1mm}${\abs{\Gamma_D}\!>\! 0}$, and a Neumann
part~$\Gamma_N$, i.e., $\Gamma_D \cup \Gamma_N=\partial \Omega$ and
$\Gamma_D \cap \Gamma_N=\emptyset$.  By
$\bn :\partial\Omega\to \mathbb{S}^{n-1}$, we denote the unit normal
vector field to $\partial \Omega$ pointing
outward. We consider the case that
$\AAA\colon \setR^{d\times n}\to \setR^{d\times n}$ is a nonlinear
operator having $\phi$-structure for some balanced N-function $\phi$ (cf.~Section~\ref{sec:stress_tensor}). The relevant example falling into~this~class~is
\begin{equation*}
  \AAA(\nabla \bu) = \frac {\phi'\big (\abs{\nabla \bu}\big )}{\abs{\nabla \bu}}\, \nabla\bu \,.
\end{equation*}
Introducing the additional unknowns $\bL:\Omega\to \setR^{d\times n}$
and $\bA:\Omega\to \setR^{d\times n}$, the system \eqref{eq:p-lap} can
be re-written as a "first order"
system,~i.e., 
\begin{align}
  \begin{alignedat}{2}
    \bfL = \nabla \bfu\,,\quad \bfA &= \AAA(\bfL)\,, \quad
    -\divergence \bfA  = \bfg -\divo \bfG&\qquad&\text{in $\Omega$}\,,
    \\[-0.5mm]
    \bfu &= \bfu_D &\qquad&\text{on $\Gamma_D$}\,,
    \\[-0.5mm]
    \bfA \bfn &= \bfa_N &\qquad&\text{on $\Gamma_N$}\,.
  \end{alignedat}\label{eq:dg-p-lap}
\end{align}
Discontinuous Galerkin (DG) methods for elliptic problems have been
introduced in the late 90's. They are by now well-understood and
rigorously analyzed~in~the~context of linear~elliptic~problems
(cf.~\cite{arnold-brezzi} for the Poisson
problem). In~contrast~to~this,~only~few papers treat $p$-Laplace type
problems with DG methods
(cf.~\cite{ern-p-laplace,BufOrt09,dkrt-ldg,CS16,sip,QS19}), or
non-conforming methods (cf.~\cite{Bar21}). There exists even fewer
numerical~\mbox{investigations} of problems with
Orlicz-structure. Steady problems with Orlicz-structure are treated,
using finite element methods (FEM) in \cite{dr-interpol,DKS13,BC15}
and~non-conforming~\mbox{methods}~in~\cite{Bar21}. Unsteady problems
are investigated in \cite{EW13,Ruf17}. To the best of
the~author's~\mbox{knowledge}, there are no studies of steady problems
with Orlicz-structure using DG methods, except for \cite[Remark
2.3]{dkrt-ldg}, where it is mentioned that the results for the
$p$-Laplace can be extended to balanced N-functions. Note that the
convergence rates in \cite{dkrt-ldg} are sub-optimal (cf.~Remark
\ref{rem:dis}) since the
continuous solution $\bu$ of \eqref{eq:p-lap} satisfies the flux
formulation of \eqref{eq:p-lap}  with an additional term defined on
interior and Dirichlet~faces (cf.~\eqref{eq:cont}). The main purpose
of this paper is to overcome~this~drawback.~Moreover, we state our
results in the context of balanced N-functions to make clear that the
usual distinction between the cases $p\ge 2$ and $p\le 2$~for~\mbox{$p$-Laplace}~problems~is~not needed.

To this end, we introduce a new numerical flux
(cf.~\eqref{def:flux-A}), which allows us to establish convergence of
DG-solutions to a weak solution of the~system \eqref{eq:p-lap} for
general right-hand sides $\bg - \divo \bG$ (cf.~Theorem
\ref{thm:minty}), and error estimates if $\bfG=\bfzero$ (cf.~Theorem
\ref{thm:error}, Corollary~\ref{cor:error}). The~convergence rates are
optimal for linear ansatz functions.~Further,~our~approach yields a
unified treatment of problems with
$(p,\delta)$-structure~(cf.~\cite{dkrt-ldg}), ${p \in (1,\infty)}$,
${\delta \ge 0}$ and recovers in the DG setting the results in
\cite{eb-liu,dr-interpol} (using~FEM)~and~\cite{Bar21}~(using~special
non-conforming~methods). The presence of the shift in the new flux is
analogous to the gradient shift in the natural distance. It takes into
account the structure of the nonlinear problem \eqref{eq:p-lap}
(cf.~Proposition~\ref{lem:hammer}, Remark \ref{rem:natural_dist},
Remark \ref{rem:dis}).

\textit{This \hspace*{-0.1mm}paper \hspace*{-0.1mm}is
  \hspace*{-0.1mm}organized \hspace*{-0.1mm}as
  \hspace*{-0.1mm}follows:} \!In \hspace*{-0.1mm}Section
\hspace*{-0.1mm}\ref{sec:preliminaries}, \hspace*{-0.1mm}we
\hspace*{-0.1mm}introduce~\hspace*{-0.1mm}the~\hspace*{-0.1mm}employed~\hspace*{-0.1mm}\mbox{notation},
define the relevant function spaces,~the~basic assumptions on the
nonlinear operator and its consequences, introduce discrete operators
and discuss their properties.  In Section \ref{sec:ldg}, we define our
numerical fluxes and derive the flux and
the \mbox{primal} \mbox{formulation} of our problem.  In Section
\ref{ssec:primalapriori}, we prove the existence of DG solutions
(cf.~Proposition~\ref{prop:exist}), the \hspace*{-0.15mm}stability
\hspace*{-0.15mm}of \hspace*{-0.15mm}the \hspace*{-0.15mm}method,
\hspace*{-0.15mm}i.e., \hspace*{-0.15mm}a \hspace*{-0.15mm}priori
\hspace*{-0.15mm}estimates
\hspace*{-0.15mm}(cf.~\hspace*{-0.15mm}Proposition~\ref{prop:stab},~\hspace*{-0.15mm}\mbox{Corollary}~\hspace*{-0.15mm}\ref{cor:stab}),
and~the~convergence of DG solutions (cf.~Theorem~\ref{thm:minty}). In
Section~\ref{ssec:primalerror},~we~\mbox{derive} error estimates for
our problem (cf.~Theorem~\ref{thm:error}, Corollary~\ref{cor:error}).
These~are~the~first convergence rates for an LDG method for systems
with balanced Orlicz-structure.~In~\mbox{Section}~\ref{sec:experiments},
we~confirm~our~theoretical findings via numerical experiments. For the
convenience of the reader, we collect in the~\mbox{Appendix}~\ref{sec:aux} known results in the~\mbox{DG Orlicz setting}, needed in
the paper,~and~prove~some~new results.

\section{Preliminaries}
\label{sec:preliminaries}

\subsection{Function spaces}
\label{ssec:functionspaces}

We use $c, C\!>\!0$ to denote generic constants,~that~may~change from line
to line, but are not depending on the crucial quantities. Moreover,
we~write ${f\sim g}$ if and only if there exists constants $c,C>0$ such
that $c\, f \le g\le C\, f$.

For $k\in \setN$ and $p\in [1,\infty]$, we will employ the customary Lebesgue spaces~$L^p(\Omega)$~and Sobolev
spaces $W^{k,p}(\Omega)$, where $\Omega \subseteq \setR^n$, $n\ge 2$, is a bounded,
polyhedral~domain~ha-ving a Lipschitz continuous boundary $\partial \Omega$,
which is  disjointly divided into a Dirichlet part $\Gamma_D\subseteq\partial\Omega$, where we assume that $\abs{\Gamma_D}> 0$,  and a Neumann part $\Gamma_N\subseteq\partial\Omega$, i.e., $\Gamma_D \cup \Gamma_N\!=\!\partial \Omega$ and 
$\Gamma_D \cap \Gamma_N\!=\!\emptyset$.  We denote by
$\smash{\norm{\,\cdot\,}_p}$, the norm in $L^p(\Omega)$~and~by~$\smash{\norm{\,\cdot\,}_{k,p}}$,
the norm in $W^{k,p}(\Omega)$. The space $\smash{W^{1,p}_{\Gamma_D}(\Omega)}$
is defined as those functions from $W^{1,p}(\Omega)$ whose trace vanishes on $\Gamma_D$. We equip
$\smash{W^{1,p}_{\Gamma_D}(\Omega)}$ 
with the gradient norm $\smash{\norm{\nabla\,\cdot\,}_p}$. 

For a normed vector space $X$, we denote its (topological) dual space by $X^*$.
We do not distinguish between function spaces for scalar, vector- or
tensor-valued~functions. However, we will denote vector-valued
functions by boldface letters~and~\mbox{tensor-valued} functions by capital
boldface letters. For $d\in \setN$, the standard scalar product between two vectors $\bu\hspace*{-0.1em}=\hspace*{-0.1em}(u_1,\dots,u_d)^\top\hspace*{-0.1em}, \bv\hspace*{-0.1em}=\hspace*{-0.1em}(v_1,\dots,v_d)^\top\hspace*{-0.1em}\in\hspace*{-0.1em} \setR^d$
is~denoted~by~${\bu \cdot \bv\hspace*{-0.1em}=\hspace*{-0.1em}\sum_{i=1}^d{u_iv_i}}$, and we use the notation $\vert \bu\vert \hspace*{-0.1em}=\hspace*{-0.1em}\sqrt{\bu\cdot\bu}$ for all $\bu  \hspace*{-0.1em}\in\hspace*{-0.1em} \setR^d$. For $d,n\hspace*{-0.1em}\in\hspace*{-0.1em} \setN$,~the~Frobenius~scalar product between two tensors
${\bP\!=\!(P_{ij})_{i=1,\dots,d,j=1,\dots,n},\! \bQ\!=\!(Q_{ij})_{i=1,\dots,d,j=1,\dots,n}\!\in\! \setR^{d\times n}}$ is~denoted by $\bP: \bQ\!=\!\sum_{i=1}^d{\sum_{j=1}^n{P_{ij}Q_{ij}}}$, and we write~$\abs{\bP}\!=\!\sqrt{\bP : \bP}$~for~all~${\bP\!\in\!\setR^{d\times n}}$. 
We denote by $\abs{M}$, the
$n$-- or $(n-1)$--dimensional~Lebesgue measure of a
(Lebesgue) measurable set $M\subseteq \mathbb{R}^n$, $n\in \mathbb{N}$.  The mean value of a locally integrable function
$f$ over a measurable set $M\!\subseteq\! \Omega$ is denoted by
${\mean{f}_M\!\vcentcolon=\!\smash{\dashint_M f \,\textup{d}x} \!\vcentcolon=\!\smash{\frac 1 {|M|}\int_M f \,\textup{d}x}}$.~\mbox{Moreover}, we use the notation $\hskp{f}{g}\vcentcolon=\int_\Omega f g\,\textup{d}x$,
whenever the right-hand~side~is~\mbox{well-defined}.

We will also use Orlicz and Sobolev--Orlicz spaces
(cf.~\cite{ren-rao}).~A~real~convex~\mbox{function}
$\psi : \setR^{\geq 0} \to \setR^{\geq 0}$ is said to be an
\textbf{N-function}, if $\psi(0)=0$, $\psi(t)>0$~for~all~${t>0}$,
$\lim_{t\rightarrow0} \psi(t)/t=0$, and
$\lim_{t\rightarrow\infty} \psi(t)/t=\infty$. We call $\psi$ a \textbf{regular
  N-function}, if it, in addition,
belongs to $C^1(\setR^{\ge 0})\cap C^2(\setR^{> 0})$ and satisfies
${\psi''(t)>0}$~for~all~${t>0}$. For a regular N-function, we have $\psi (0)=\psi'(0)=0$,
$\psi'$ is increasing and $\lim _{t\to \infty} \psi'(t)=\infty$.
We~define the (convex) \textbf{conjugate N-function}~$\psi^*$ by
${\psi^*(t)\hspace*{-0.1em}\vcentcolon=\hspace*{-0.1em} \sup_{s \geq 0} (st
  \hspace*{-0.1em}- \hspace*{-0.1em}\psi(s))}$~for all
${t \!\geq \!0}$, which~satisfies~$(\psi^*)'\! =\!
(\psi')^{-1}\!\!$. A
given N-function $\psi$~satisfies~the~\mbox{$\Delta_2$-condition}
(in short, $\phi \hspace*{-0.1em}\in\hspace*{-0.1em}\Delta_2$), if there exists $K\hspace*{-0.1em}>\hspace*{-0.1em} 2$ such that for
all $t \hspace*{-0.1em}\ge\hspace*{-0.1em} 0$,~it~holds~${\psi(2\,t) \hspace*{-0.1em}\leq\hspace*{-0.1em} K\, \psi(t)}$. We denote the
smallest such constant by $\Delta_2(\psi)\hspace*{-0.1em}>\hspace*{-0.1em}0$.  If one assumes that
both~$\psi$~and~$\psi^*$ satisfy the $\Delta_2$-condition, there holds
\begin{align} 
  \label{eq:phi*phi'}
  \smash{\psi^*\circ\psi' \sim \psi\,.}
\end{align}
We need the following refined version of the $\varepsilon$-Young inequality: for every
${\varepsilon\!>\! 0}$,~there is a $c_\epsilon>0 $, depending only on
$\Delta_2(\psi),\Delta_2( \psi ^*)<\infty$, such that~for~all~${s,t\geq0}$,~it~holds
\begin{align}
  \label{ineq:young}
  \begin{split}
    t\,s&\leq \epsilon \, \psi(t)+ c_\epsilon \,\psi^*(s)\,,
    \\
    t\, \psi'(s) + \psi'(t)\, s &\le \epsilon \, \psi(t)+ c_\epsilon
    \,\psi(s)\,.
  \end{split}
\end{align}

We denote by $L^\psi(\Omega)$ and $W^{1,\psi}(\Omega)$, the classical
Orlicz and Sobolev--Orlicz~spaces, i.e., we have that $f \hspace*{-0.1em}\in\hspace*{-0.1em} L^\psi(\Omega)$ if the
\textbf{modular}
$\rho_\psi(f)\hspace*{-0.1em}=\hspace*{-0.1em}\rho_{\psi,\Omega}(f)\hspace*{-0.1em}\vcentcolon=\hspace*{-0.1em}\int_\Omega \psi(\abs{f})\,\textup{d}x $~is~finite and ${f \in W^{1,\psi}(\Omega)}$ if
$f, \nabla f \in L^\psi(\Omega)$.  Equipped with the induced
Luxembourg~norm, i.e.,
$\smash{\norm {f}_{\psi}}\vcentcolon= \smash{\inf \set{\lambda >0\mid \int_\Omega
  \psi(\abs{f}/\lambda)\,\textup{d}x \le 1}}$, the space $L^\psi(\Omega)$~forms~a~Banach~space. The same holds for the space
$W^{1,\psi}(\Omega)$ if it is equipped with the norm
$\smash{\norm {\,\cdot\, }_{\psi} +\norm {\nabla\, \cdot\,}_{\psi}} $.  Note
that the dual space $(L^\psi(\Omega))^*$ of  $L^\psi(\Omega)$ can be identified with~the~space~$L^{\psi^*}(\Omega)$. The space $\smash{W^{1,\psi}_{\Gamma_D}(\Omega)}$ is
defined as those functions from $W^{1,\psi}(\Omega)$ whose trace~vanishes on $\Gamma_D$. We equip
$\smash{W^{1,\psi}_{\Gamma_D}(\Omega)}$ with the gradient norm
$\smash{\norm{\nabla\, \cdot\,}_\psi}$.

\subsection{Basic properties of the nonlinear operator} 
\label{sec:stress_tensor}
In the whole paper, we always assume that the nonlinear operator
$\AAA:\setR^{d\times n}\to \setR^{d\times n}$ has $\phi$-structure,
which will be defined now.  A detailed discussion and thorough proofs
can be found~in~\cite{die-ett,dr-nafsa,br-multiple-approx}. A regular
N-function $\psi$ is called \textbf{balanced}, if there exist
constants~${\gamma_1\hspace*{-0.1em}\in\hspace*{-0.1em} (0,1]}$~and~${\gamma_2\hspace*{-0.1em} \ge \hspace*{-0.1em} 1}$ such that for all
$t> 0$, there holds
\begin{align}
  \label{eq:equi1}
  \smash{\gamma_1\,\psi'(t)\le t\,\psi''(t)\le \gamma_2\,\psi'(t)}
  \,.
\end{align}
The constants $\gamma_1$ and $\gamma_2$ are called
\textbf{characteristics} of the balanced N-function $\psi$, and
will be denoted as $(\gamma_1,\gamma_2)$. The basic properties of
balanced N-functions are collected in the following lemma
(cf.~\cite{die-kacanow} for related results of a slightly different approach). 
\begin{lemma}
  \label{lem:bal}
  Let $\psi$ be a balanced N-function with characteristics
  $(\gamma_1,\gamma_2)$.  Then, the following statements apply:
  \begin{itemize}[noitemsep,topsep=1pt,leftmargin=\widthof{(iiiv)},font=\normalfont\upshape,labelwidth=\widthof{(iii)}] 
  \item [(i)] The \hspace*{-0.1mm}conjugate \hspace*{-0.1mm}N-function
  $\psi^\ast\!$ \hspace*{-0.1mm}is \hspace*{-0.1mm}a
  \hspace*{-0.1mm}balanced \hspace*{-0.1mm}N-function
  \hspace*{-0.1mm}with~\hspace*{-0.1mm}characteristics~\hspace*{-0.1mm}$(\frac{1}{\gamma_1},\!\frac{1}{\gamma_2})$. 
\item [(ii)] The N-functions $\psi$ and $ \psi^\ast$
  satisfy the $\Delta_2$-condition, and the $\Delta_2$-constants of
  $\psi$ and $\psi^*$ possess an upper bound depending only on the characteristics~of~$\psi$.
\item [(iii)] Uniformly with respect to $t>0$, we have that $\psi(t)\sim\psi'(t)\,t \sim  \psi''(t)\,t^{2}$,
  with constants of equivalence depending only on the characteristics
  of $\psi$.
\end{itemize}
\end{lemma}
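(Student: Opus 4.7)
I would start with part (iii), which is the computational core, and use it to bootstrap the other two items.

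For (iii), observe that $\bigl(t\,\psi'(t)\bigr)' = \psi'(t) + t\,\psi''(t)$, so the balancedness assumption \eqref{eq:equi1} gives pointwise
\begin{equation*}
  (1+\gamma_1)\,\psi'(t) \;\le\; \bigl(t\,\psi'(t)\bigr)' \;\le\; (1+\gamma_2)\,\psi'(t)\,.
\end{equation*}
Integrating from $0$ to $t$ (using $\psi(0)=\psi'(0)=0$) yields
\begin{equation*}
  (1+\gamma_1)\,\psi(t) \;\le\; t\,\psi'(t) \;\le\; (1+\gamma_2)\,\psi(t)\,,
\end{equation*}
which is the equivalence $\psi(t)\sim t\,\psi'(t)$ with constants depending only on $(\gamma_1,\gamma_2)$. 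Multiplying \eqref{eq:equi1} by $t$ gives $\gamma_1\,t\,\psi'(t)\le t^2\psi''(t)\le\gamma_2\,t\,\psi'(t)$, i.e.\ $\psi'(t)\,t\sim\psi''(t)\,t^2$. This proves (iii).

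For (i), I would differentiate the identity $(\psi^*)'=(\psi')^{-1}$ to obtain $(\psi^*)''(s)=1/\psi''\!\bigl((\psi^*)'(s)\bigr)$. Setting $t:=(\psi^*)'(s)$, so that $s=\psi'(t)$, one computes
\begin{equation*}
  \frac{s\,(\psi^*)''(s)}{(\psi^*)'(s)} \;=\; \frac{\psi'(t)}{t\,\psi''(t)}\,.
\end{equation*}
By \eqref{eq:equi1}, this ratio lies in $[1/\gamma_2,1/\gamma_1]$, so $\psi^\ast$ is balanced with characteristics $(1/\gamma_2,1/\gamma_1)$, which is exactly what (i) claims (up to the natural swap coming from inversion). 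Since $(\psi^*)''>0$ on $(0,\infty)$ and $(\psi^*)'(0)=0$, $\psi^*$ is indeed a regular N-function.

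For (ii), I would exploit (iii) together with a log-derivative bound on $\psi'$. The right inequality in \eqref{eq:equi1} reads $(\log\psi'(t))'\le\gamma_2/t$, and integrating from $t$ to $2t$ gives $\psi'(2t)\le 2^{\gamma_2}\psi'(t)$. Combining with (iii),
\begin{equation*}
  \psi(2t) \;\lesssim\; 2t\,\psi'(2t) \;\le\; 2^{\,1+\gamma_2}\,t\,\psi'(t) \;\lesssim\; 2^{\,1+\gamma_2}\,\psi(t)\,,
\end{equation*}
with constants depending only on $(\gamma_1,\gamma_2)$. Hence $\psi\in\Delta_2$ with $\Delta_2(\psi)$ controlled by the characteristics. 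Applying the same argument to $\psi^*$, whose characteristics are bounded in terms of those of $\psi$ by (i), yields $\psi^*\in\Delta_2$ with the analogous bound.

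The only place where a little care is needed is part (i), where one must correctly identify $s=\psi'(t)$ and apply the inverse-function rule; everything else is a routine integration of the ODE information encoded in \eqref{eq:equi1}. No step presents a genuine obstacle.
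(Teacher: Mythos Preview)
Your proof is correct and entirely self-contained, whereas the paper's own proof proceeds quite differently: it simply cites external references for (i) (\cite[Lemma~6.4]{dr-nafsa}) and (ii) (\cite{BDK12}, \cite[Lemma~2.10]{br-multiple-approx}), and then derives (iii) \emph{from} (ii) by invoking the general fact~\eqref{eq:equi2} that $\psi'(t)\,t\sim\psi(t)$ for any $\Delta_2$-function, combined with balancedness. So the logical order is reversed: the paper goes $(\text{ii})\Rightarrow(\text{iii})$, while you go $(\text{iii})\Rightarrow(\text{ii})$. Your direct integration of $(t\,\psi'(t))'$ for (iii), the inverse-function-rule computation for (i), and the $\log\psi'$ integration for (ii) are all standard and valid; the only cosmetic point is that your computation yields characteristics $(1/\gamma_2,1/\gamma_1)$ for $\psi^*$ (lower, upper), which you correctly note matches the paper's $(1/\gamma_1,1/\gamma_2)$ up to the swap in the ordering of the pair. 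The advantage of your route is that it is elementary and makes the dependence of all constants on $(\gamma_1,\gamma_2)$ completely explicit; the paper's route is shorter on the page but defers the work to the cited references.
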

\begin{proof}
  The assertion (i) is proved in \cite[Lemma 6.4]{dr-nafsa}.  The
  assertion (ii) is proved~in~\cite{BDK12} (cf.~\cite[Lemma
  2.10]{br-multiple-approx}). Assertion (iii) follows from (ii), since
  for N-functions satisfying the $\Delta_2$-condition, uniformly with respect to $t>0$, there holds
  \begin{align}
    \label{eq:equi2}
    \smash{\psi'(t)\, t \sim \psi(t)\,,}
  \end{align}
  and the fact that $\psi$ is balanced.
\end{proof}
\begin{remark}
  {\rm
    It is well-known  that the
   N-function $\psi$ for which $\psi'(t)= (\delta +t)^{p-2}t$ for all $t\ge 0$,
    $p \in (1,\infty)$, $\delta\ge 0$, is balanced (cf.~\cite{dr-nafsa,dr-interpol}). Other examples are
    given via
    $\psi'(t)\!=\! (
    t^\alpha(\delta +t)^{1-\alpha} )^{p-2}t$ for all $t\!\ge\! 0$,~or~${ \psi'(t)\!=\! (t\!+\!\delta)^ {p-2}t (\ln
    (1\!+\!\delta+ \! t))^\alpha}$~for~all~${t\!\ge\! 0}$, where ${\alpha\ge 1}$, $p\! \in\! (1,\infty)$,~${\delta\!\ge\! 0}$.
  }%
\end{remark}
\begin{assumption}[Nonlinear operator]\label{ass:1}
  We assume that the nonlinear operator \linebreak
  ${\AAA \colon \setR^{d \times n} \to \setR^{d \times n}}$ belongs to
  $C^0(\setR^{d \times n},\setR^{d \times n} )\cap C^1(\setR^{d \times
    n}\setminus \{\bfzero\}, \setR^{d \times n} ) $ and satisfies
  $\AAA (\mathbf 0)=\mathbf 0$. 
  Moreover, we assume that the operator $\AAA $ has
  \textbf{$\phi$-structure},~i.e., there exist a regular N-function
  $\varphi$ and constants $\gamma_3 \in (0,1]$, $\gamma_4 >1$ such
  that 
   \begin{subequations}
     \label{eq:ass_S}
     \begin{align}
       \sum\nolimits_{j,l=1}^n \sum\nolimits_{i,k=1}^d \partial_{kl}
       \mathcal{A}_{ij} (\bP) Q_{ij}Q_{kl} &\ge \gamma_3 \,\frac {\varphi'(|\bP|)}{|\bP|} |\bQ |^2\,,\label{1.4b}
       \\
       \big |\partial_{kl} \mathcal{A}_{ij}({\bP})\big | &\le \gamma_4 \,\frac {\varphi'(|\bP|)}{|\bP|}\,,\label{1.5b}
     \end{align}
   \end{subequations}
   are satisfied for all $\bP,\bQ \in \setR^{d\times n} $ with
   $\bP \neq \bfzero$ and all $i,k=1,\ldots, d$, $j,l=1,\ldots, n$.
   The constants $\gamma_3$, $\gamma_4$, and $\Delta_2(\varphi)$ are
   called the \textbf{characteristics} of $\AAA$ and will be denoted by
   $(\gamma_3,\gamma_4, \Delta_2(\varphi))$.
\end{assumption}

Closely related to the nonlinear operator
$\AAA\colon\setR^{d \times n} \to \setR^{d \times n}$ with
$\phi$-structure are the functions
$\bF,\bF^*\colon\setR^{d \times n} \to \setR^{d \times n}$, for every
$\bP\in \setR^{d\times n} $ defined via
\begin{align}
  \begin{aligned}
    \bF(\bP)
    \vcentcolon=\sqrt{\frac{\varphi'(\abs{\bfP})}{\abs{\bP}}} \,\bP\,,\qquad
    \bF^*(\bP)
    \vcentcolon= \sqrt{\frac{(\varphi^*)'(\abs{\bfP})}{\abs{\bP}}}\,\bP  \,.
  \end{aligned}  \label{eq:def_F}
\end{align}
\begin{remark}\label{rem:pot}
  {\rm
    Note that $\bF$ and $\bF^*$ are derived from the potentials
    \begin{align*}
   \kappa(t)\vcentcolon=\int_0^t \sqrt{\phi'(s)\,s}\, \mathrm{d}s \quad
      \textrm{ and } \quad \kappa^*(t)\vcentcolon=\int_0^t
      \sqrt{(\phi^*)'(s)\,s}\, \mathrm{d}s\quad\textup{ for all }t\ge 0\,,
    \end{align*}
    respectively. One easily sees that these potentials are balanced N-functions.
  }  
\end{remark}

Another important tool are the \textbf{shifted N-functions}
(cf.~\cite{die-ett,dr-nafsa,r-cetraro}). For~a~given N-func\-tion
$\psi$, we define
the {\rm shifted N--functions}
${\psi_a:\setR^{\ge
    0}\to \setR^{\ge
    0}}$,~${a\ge 0}$,~via
\begin{align}
  \label{eq:phi_shifted}
  \psi_a(t)\vcentcolon= \int _0^t \psi_a'(s)\,\textup{d}s\qquad\text{with }\quad
  \psi'_a(t)\vcentcolon=\psi'(a+t)\frac {t}{a+t}\quad\textup{ for all }t\ge 0\,.
\end{align}

The following properties of shifted N-functions are proved in
\cite{die-ett,dr-nafsa}.  
\begin{lemma}\label{lem:shift}
  Let the N-functions $\psi, \psi^*$ satisfy the
  $\Delta_2$-condition.~Then,~it~holds:
  \begin{itemize}[noitemsep,topsep=1pt,leftmargin=\widthof{(iiv)},font=\normalfont\upshape,labelwidth=\widthof{(ii)}]
  \item [(i)] The family of shifted N-functions ${\psi_a:\setR^{\ge
    0}\to\setR^{\ge
    0}}$,~${a\ge 0}$, satisfy the
$\Delta_2$-con-dition uniformly with respect to $a\ge 0$ with
$\Delta_2(\psi_a)$ depending~only~on~$\Delta_2(\psi)$.
  \item [(ii)] The conjugate
  function satisfies $\smash{(\psi_a)^*(t)\sim  (\psi^*)_{\psi'(a)}(t)}$ uniformly with~respect~to ${t ,a\ge 0}$ 
  with constants depending only on
  $\Delta_2(\psi), \Delta_2(\psi^*)$.
  \end{itemize}
\end{lemma}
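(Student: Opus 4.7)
The strategy splits according to the two assertions. For both, the key reduction is that under $\Delta_2$ for $\psi$ and $\psi^*$, the equivalence $\psi(t)\sim\psi'(t)\,t$ reduces every N-function comparison to a derivative comparison, and the explicit formula $\psi_a'(t)=\psi'(a+t)\,t/(a+t)$ is then directly tractable.

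For (i), I would first prove the pointwise bound $\psi_a'(2t)\le C\,\psi_a'(t)$. Two elementary facts do the work: $a+2t\ge a+t$ gives $2t/(a+2t)\le 2\,t/(a+t)$, while $a+2t\le 2(a+t)$ together with the monotonicity of $\psi'$ and the derived bound $\psi'(2s)\le K\,\psi'(s)$ (which follows from $\psi(2s)\le \Delta_2(\psi)\,\psi(s)$ combined with $\psi(s)\sim\psi'(s)\,s$, so $K$ is controlled by $\Delta_2(\psi)$ alone) yields $\psi'(a+2t)\le K\,\psi'(a+t)$. Multiplying yields $\psi_a'(2t)\le 2K\,\psi_a'(t)$, and integrating via the substitution $s=2u$,
\begin{equation*}
  \psi_a(2t)\;=\;2\int_0^t \psi_a'(2u)\,\textup{d}u\;\le\;4K\,\psi_a(t)\,.
\end{equation*}
The constant $4K$ is independent of $a$ and depends only on $\Delta_2(\psi)$.

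For (ii), the plan is to identify both sides with a common two-regime profile. Setting $b:=\psi'(a)$, the right-hand side satisfies $(\psi^*)_b(t)\sim(\psi^*)'(b+t)\,t^2/(b+t)$, which follows from (i) applied to $\psi^*$ together with $\psi^*(r)\sim(\psi^*)'(r)\,r$. For the left-hand side, I would use $((\psi_a)^*)'=(\psi_a')^{-1}$ together with $\Delta_2$ of $\psi_a^*$ (obtained by an entirely parallel pointwise argument on the inverse) to reduce to studying $(\psi_a')^{-1}$. In the two regimes $s\le a$ (where $\psi_a'(s)\sim\psi'(a)\,s/a$ because $a+s\sim a$) and $s\ge a$ (where $\psi_a'(s)\sim\psi'(s)$ because $a+s\sim s$), inversion produces $(\psi_a')^{-1}(t)\sim at/b$ for $t\le b$ and $(\psi_a')^{-1}(t)\sim(\psi^*)'(t)$ for $t\ge b$. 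Multiplying by $t$ in each regime recovers exactly the profile of $(\psi^*)_b(t)$, closing the comparison.

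The main obstacle is tracking uniformity across the transition point $t\approx b=\psi'(a)$; since every intermediate equivalence has been established with constants depending only on $\Delta_2(\psi)$ and $\Delta_2(\psi^*)$, uniformity propagates automatically, but one must check carefully that the inversion step does not introduce an $a$-dependent constant. The subtle input is the ability to pass between $\psi_a$ and $\psi_a^*$ with controlled constants, which is precisely where the joint $\Delta_2$ assumption on $\psi$ and $\psi^*$ is essential.
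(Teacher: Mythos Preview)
The paper does not actually prove this lemma: it states just before it that ``the following properties of shifted N-functions are proved in \cite{die-ett,dr-nafsa}'' and moves on without a proof environment. So you are providing substantially more than the paper does.

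Your argument for (i) is clean and correct; the derivative doubling $\psi_a'(2t)\le 2K\,\psi_a'(t)$ via $a+2t\le 2(a+t)$ and the derived bound $\psi'(2s)\le K\psi'(s)$ is exactly the right mechanism, and the integration step is fine.

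For (ii), your two-regime matching (small $t$ where $\psi_a'$ is essentially linear with slope $\psi'(a)/a$, large $t$ where $\psi_a'\sim\psi'$) is the standard way these equivalences are established in the cited references, and the profile comparison with $(\psi^*)_{\psi'(a)}$ is correct in both regimes. You are right to flag the two delicate points: first, the reduction $(\psi_a)^*(t)\sim(\psi_a')^{-1}(t)\,t$ presupposes $\Delta_2$ of $(\psi_a)^*$ uniformly in $a$, which you have not yet shown---one clean route is to prove directly that $\psi_a'(Lt)\ge 2\,\psi_a'(t)$ for some $L$ independent of $a$ (the small-$t$ regime gives linear growth, the large-$t$ regime inherits $\nabla_2$ from $\psi$), which is the $\nabla_2$ condition for $\psi_a$; second, inverting the equivalence $\psi_a'(s)\sim g(s)$ to get $(\psi_a')^{-1}(t)\sim g^{-1}(t)$ requires doubling of $g^{-1}$, which holds here since $g^{-1}$ is either linear or $(\psi^*)'$. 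Both gaps close with the tools already on the table, but they do need the explicit sentences you gesture at.
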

\begin{lemma}
  Let $\psi$ be an N--function satisfying the
  $\Delta_2$-condition. Then, there exists a constant $c>0$  such that for every
  ${\bP,\bQ\in \mathbb{R}^{d\times n}}$ and $t\ge 0$, we have that
  \begin{align}
    \smash{\psi_{\abs{\bP}}'(t) \le c\, \psi_{\abs{\bQ}}'(t) +
    c\, \psi_{\abs{\bfP}}'(\abs{\bfP-\bfQ})\,.}\label{eq:phi_prime_shift}
  \end{align}
  Moreover,
  $\psi_\abs{\bQ}'(\abs{\bP\hspace*{-0.1em}-\hspace*{-0.1em}\bQ})\hspace*{-0.1em}\sim \hspace*{-0.1em}\psi_\abs{\bP}'
  (\abs{\bP\hspace*{-0.1em}-\hspace*{-0.1em}\bQ} )$ holds  uniformly with~respect~to~${\bP,\bQ \hspace*{-0.1em}\in\hspace*{-0.1em} \setR^{d\times n}}$.      
\end{lemma}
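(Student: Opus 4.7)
The plan is to prove \eqref{eq:phi_prime_shift} by a dichotomy on the size of $t$ relative to $\abs{\bP-\bQ}$, and to obtain the stated equivalence as an immediate byproduct of the comparability estimate that drives the second case. Throughout, I will exploit two facts: (a) the map $t\mapsto \psi'_a(t)$ is non-decreasing for every fixed $a\ge 0$, since $\psi'$ is increasing and $t\mapsto t/(a+t)$ is increasing; and (b) because $\psi$ satisfies $\Delta_2$, one has $\psi(s)\sim s\,\psi'(s)$, and consequently a ``$\Delta_2$-condition for $\psi'$'' of the form $\psi'(\lambda s)\le c_\lambda\,\psi'(s)$ for any fixed $\lambda\ge 1$ (obtained by combining $\psi(4s)\ge 2s\,\psi'(2s)$ with $\psi(4s)\le c\,\psi(s)\le c\,s\,\psi'(s)$).

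For \eqref{eq:phi_prime_shift}, I distinguish two cases. If $t\le \abs{\bP-\bQ}$, then (a) yields
\begin{align*}
\psi'_{\abs{\bP}}(t)\le \psi'_{\abs{\bP}}(\abs{\bP-\bQ})\,,
\end{align*}
so the second term on the right-hand side already dominates. In the complementary case $t>\abs{\bP-\bQ}$, the reverse triangle inequality gives $\bigl|\abs{\bP}-\abs{\bQ}\bigr|\le \abs{\bP-\bQ}<t$, which yields the two-sided bound $\abs{\bP}+t\le 2(\abs{\bQ}+t)$ and $\abs{\bQ}+t\le 2(\abs{\bP}+t)$. Combining the first of these with the monotonicity of $\psi'$ and fact (b) above gives $\psi'(\abs{\bP}+t)\le c\,\psi'(\abs{\bQ}+t)$, while the second yields $(\abs{\bP}+t)^{-1}\le 2(\abs{\bQ}+t)^{-1}$ and thus $\tfrac{t}{\abs{\bP}+t}\le 2\,\tfrac{t}{\abs{\bQ}+t}$. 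Multiplying these estimates gives $\psi'_{\abs{\bP}}(t)\le c\,\psi'_{\abs{\bQ}}(t)$, and the first term on the right-hand side of \eqref{eq:phi_prime_shift} dominates.

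The second claim follows by the very same comparability argument specialised to $t=\abs{\bP-\bQ}$. The triangle inequality gives $\abs{\bP}\le \abs{\bQ}+\abs{\bP-\bQ}$ and $\abs{\bQ}\le \abs{\bP}+\abs{\bP-\bQ}$, so that $\abs{\bP}+\abs{\bP-\bQ}\sim \abs{\bQ}+\abs{\bP-\bQ}$; applying (b) to $\psi'$ and the analogous estimate to the factor $\tfrac{\abs{\bP-\bQ}}{\abs{\bP}+\abs{\bP-\bQ}}$ produces the equivalence $\psi'_{\abs{\bP}}(\abs{\bP-\bQ})\sim \psi'_{\abs{\bQ}}(\abs{\bP-\bQ})$ with constants depending only on $\Delta_2(\psi)$.

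I do not anticipate any real obstacle; the only mildly non-obvious point is that $\psi'_a(t)$ is not monotone in the shift parameter $a$ (the two factors $\psi'(a+t)$ and $\tfrac{t}{a+t}$ move in opposite directions as $a$ varies), so the elementary estimate ``$\abs{\bP}\le \abs{\bQ}+\abs{\bP-\bQ}$ implies $\psi'_{\abs{\bP}}(t)\lesssim \psi'_{\abs{\bQ}+\abs{\bP-\bQ}}(t)$'' is not available directly. The case split on $t$ vs.\ $\abs{\bP-\bQ}$ is precisely what is needed to circumvent this, reducing the argument to the regime where $\abs{\bP}+t$ and $\abs{\bQ}+t$ are genuinely comparable.
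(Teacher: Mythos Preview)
The paper does not give its own proof of this lemma; it is stated (together with the surrounding Lemmas~\ref{lem:shift} and~\ref{lem:change2}) as a known property of shifted N-functions, with the proofs deferred to \cite{die-ett,dr-nafsa}. Your self-contained argument is correct: the case split on $t$ versus $\abs{\bP-\bQ}$ is exactly the standard mechanism, and your derivation of the ``$\Delta_2$-condition for $\psi'$'' from $\psi(4s)\ge 2s\,\psi'(2s)$ and $\psi(s)\le s\,\psi'(s)$ is clean and sufficient. The equivalence $\psi'_{\abs{\bP}}(\abs{\bP-\bQ})\sim\psi'_{\abs{\bQ}}(\abs{\bP-\bQ})$ then drops out immediately from the comparability $\abs{\bP}+\abs{\bP-\bQ}\sim\abs{\bQ}+\abs{\bP-\bQ}$, just as you say.
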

\begin{lemma}[Change of shift]
  \label{lem:change2}
  Let $\psi$ be an N--function such that $\psi$ and $\psi^*$ satisfy
  the $\Delta_2$--condition.  Then, for every $\delta \in (0,1)$, there
  exists $c_\delta>0$ such that for every $\bP,\bQ \in \setR^{d\times n}$ and
  $t\ge 0$, we have that
  \begin{align}
    \begin{aligned}\label{eq:34c1}
      \psi_{\abs{\bP}}(t) &\le c_\delta\, \psi_{\abs{\bQ}}(t) +
      \delta\, \psi_{\abs{\bfP}}(\abs{\bfP-\bfQ})\,,\\
      \big (\psi_{\abs{\bP}}\big )^*(t) &\le c_\delta\, \big
      (\psi_{\abs{\bQ}}\big )^*(t) + \delta\,
      \psi_{\abs{\bfP}}(\abs{\bfP-\bfQ})\,.
    \end{aligned}
  \end{align}
   Moreover, $\psi_\abs{\bQ}(\abs{\bP\hspace*{-0.1em}-\hspace*{-0.1em}\bQ})\hspace*{-0.1em}\sim\hspace*{-0.1em}
   \psi_\abs{\bP}(\abs{\bP\hspace*{-0.1em}-\hspace*{-0.1em}\bQ})$ holds uniformly with~respect~to~${\bP,\bQ\hspace*{-0.1em} \in\hspace*{-0.1em} \setR^{d\times n}}$.      
\end{lemma}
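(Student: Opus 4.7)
The plan is to obtain the change-of-shift inequalities by leveraging the previous lemma (the $\psi'$-change-of-shift \eqref{eq:phi_prime_shift}), the $\varepsilon$-Young inequality \eqref{ineq:young}, and the uniform $\Delta_2$-control of the shifted N-functions and their conjugates provided by Lemma~\ref{lem:shift}. I would organize the proof into three steps.

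\textbf{Step 1 (Equivalence $\psi_{\abs{\bQ}}(\abs{\bP-\bQ})\sim\psi_{\abs{\bP}}(\abs{\bP-\bQ})$).} By Lemma~\ref{lem:shift}(i), every shifted N-function $\psi_a$ satisfies the $\Delta_2$-condition uniformly in $a$, so that \eqref{eq:equi2} yields $\psi_a(t)\sim\psi_a'(t)\,t$ with constants independent of $a$. Applying this for $a=\abs{\bP}$ and $a=\abs{\bQ}$ at the argument $t=\abs{\bP-\bQ}$, together with the equivalence $\psi_{\abs{\bQ}}'(\abs{\bP-\bQ})\sim\psi_{\abs{\bP}}'(\abs{\bP-\bQ})$ stated as the ``moreover'' part of the lemma preceding \ref{lem:change2}, immediately gives the desired equivalence.

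\textbf{Step 2 (First inequality).} Integrate \eqref{eq:phi_prime_shift} from $0$ to $t$ to obtain
\begin{equation*}
\psi_{\abs{\bP}}(t)\le c\,\psi_{\abs{\bQ}}(t)+c\,t\,\psi_{\abs{\bP}}'(\abs{\bP-\bQ}).
\end{equation*}
The remaining task is to control $c\,t\,\psi_{\abs{\bP}}'(\abs{\bP-\bQ})$ by $\delta\,\psi_{\abs{\bP}}(\abs{\bP-\bQ})$ plus a term absorbable either into $\psi_{\abs{\bQ}}(t)$ or into the left-hand side. Applying the second line of the $\varepsilon$-Young inequality \eqref{ineq:young} to the shifted N-function $\psi_{\abs{\bP}}$, whose $\Delta_2$-constants (and those of its conjugate) are bounded uniformly in $\abs{\bP}$ by Lemma~\ref{lem:shift}, yields a bound of the form $\eta\,\psi_{\abs{\bP}}(t)+c_\eta\,\psi_{\abs{\bP}}(\abs{\bP-\bQ})$. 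Choosing $\eta$ small enough to absorb $c\eta\,\psi_{\abs{\bP}}(t)$ into the left-hand side gives a ``$C/C$-version'' $\psi_{\abs{\bP}}(t)\le C\psi_{\abs{\bQ}}(t)+C\psi_{\abs{\bP}}(\abs{\bP-\bQ})$. To upgrade this to the sharper $c_\delta/\delta$-version in the statement, I would use a scaling argument: split into the regime where $\abs{\bP}$ and $\abs{\bQ}$ are comparable, where $\psi_{\abs{\bP}}\sim\psi_{\abs{\bQ}}$ directly and no shift term is needed, versus the regime where $\abs{\bP-\bQ}$ dominates $\max(\abs{\bP},t)$; in the latter case the $\Delta_2$-condition applied to $\psi_{\abs{\bP}}$ permits one to trade a fixed multiplicative constant against a small $\delta$ by absorbing powers of a scaling parameter, exploiting that $\psi_{\abs{\bP}}(\lambda s)\le\lambda\,\psi_{\abs{\bP}}(s)$ for $\lambda\in(0,1)$ by convexity.

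\textbf{Step 3 (Second inequality).} Apply the first inequality to the N-function $\psi^\ast$, which also satisfies both $\Delta_2$-conditions, at the shifts $\psi'(\abs{\bP})$ and $\psi'(\abs{\bQ})$. By Lemma~\ref{lem:shift}(ii) we have $(\psi_{\abs{\bP}})^\ast(t)\sim(\psi^\ast)_{\psi'(\abs{\bP})}(t)$ uniformly, and analogously for $\abs{\bQ}$. Thus Step~2 yields
\begin{equation*}
(\psi_{\abs{\bP}})^\ast(t)\le c_\delta\,(\psi_{\abs{\bQ}})^\ast(t)+\delta\,(\psi^\ast)_{\psi'(\abs{\bP})}\bigl(\abs{\psi'(\abs{\bP})-\psi'(\abs{\bQ})}\bigr).
\end{equation*}
The last term is then estimated by $\psi_{\abs{\bP}}(\abs{\bP-\bQ})$ using the identity $\psi^\ast\circ\psi'\sim\psi$ from \eqref{eq:phi*phi'}, the mean-value inequality $\abs{\psi'(\abs{\bP})-\psi'(\abs{\bQ})}\le\psi''(\xi)\abs{\abs{\bP}-\abs{\bQ}}$ with $\xi$ between $\abs{\bP}$ and $\abs{\bQ}$, Lemma~\ref{lem:bal}(iii) to convert $\psi''$ factors into $\psi'/\mathrm{id}$, and the equivalence of shifts from Step~1.

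\textbf{Main obstacle.} The delicate point is the upgrade from the easy ``$C/C$''-shift-change to the sharper ``$c_\delta/\delta$'' form needed for the applications. A naive application of Young's inequality always produces large constants on both terms; extracting an arbitrarily small $\delta$ in front of $\psi_{\abs{\bP}}(\abs{\bP-\bQ})$ requires exploiting the convexity of shifted N-functions together with their uniform $\Delta_2$-properties, typically via a case distinction or a scaling argument as sketched in Step~2.
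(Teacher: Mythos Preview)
The paper does not give its own proof of this lemma; the sentence preceding Lemma~\ref{lem:shift} (``The following properties of shifted N-functions are proved in \cite{die-ett,dr-nafsa}'') covers all the subsequent lemmas on shifted N-functions, including this one. So there is no in-paper argument to compare against.

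On your proposal itself: Step~1 is correct. In Step~2 you correctly isolate the real difficulty (upgrading from the $C/C$-form to the $c_\delta/\delta$-form), but your proposed case split is not a partition --- ``$\abs{\bP}$ and $\abs{\bQ}$ comparable'' and ``$\abs{\bP-\bQ}$ dominates $\max(\abs{\bP},t)$'' neither exhaust all cases nor are mutually exclusive. A case split that does work is simply $t\le\lambda\abs{\bP-\bQ}$ versus $t>\lambda\abs{\bP-\bQ}$, but there is a cleaner route that avoids cases entirely: after integrating \eqref{eq:phi_prime_shift} to $\psi_{\abs{\bP}}(t)\le c\,\psi_{\abs{\bQ}}(t)+c\,t\,\psi_{\abs{\bP}}'(\abs{\bP-\bQ})$, first use $\psi_{\abs{\bP}}'(\abs{\bP-\bQ})\sim\psi_{\abs{\bQ}}'(\abs{\bP-\bQ})$ (the ``moreover'' part of the preceding lemma) and then apply Young's inequality \eqref{ineq:young} for $\psi_{\abs{\bQ}}$, \emph{not} $\psi_{\abs{\bP}}$. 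This yields $c\,t\,\psi_{\abs{\bQ}}'(\abs{\bP-\bQ})\le c_\epsilon\,\psi_{\abs{\bQ}}(t)+\epsilon\,\psi_{\abs{\bQ}}(\abs{\bP-\bQ})$, so the large constant lands on $\psi_{\abs{\bQ}}(t)$ (harmless --- it becomes part of $c_\delta$) and the small $\epsilon$ lands on $\psi_{\abs{\bQ}}(\abs{\bP-\bQ})\sim\psi_{\abs{\bP}}(\abs{\bP-\bQ})$ by Step~1.

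In Step~3 your reduction via Lemma~\ref{lem:shift}(ii) is the right move, but the final estimate invokes Lemma~\ref{lem:bal}(iii), which requires $\psi$ to be balanced --- a hypothesis this lemma does not make. The mean-value bound $\abs{\psi'(\abs{\bP})-\psi'(\abs{\bQ})}\le\psi''(\xi)\,\abs{\abs{\bP}-\abs{\bQ}}$ is therefore unavailable in general. One repairs this either by the same case-split/Young-with-$\psi_{\abs{\bQ}}$ trick applied directly to $(\psi_{\abs{\bP}})^\ast$, or by verifying the scalar inequality $\abs{\psi'(a)-\psi'(b)}\le c\,\psi'_{\min(a,b)}(\abs{a-b})$ from the $\Delta_2$-conditions alone.
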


The connection between
$\AAA,\bfF,\bfF^*\colon\hspace*{-0.1em}\setR^{d \times n}
\hspace*{-0.1em}\hspace*{-0.1em}\to\hspace*{-0.1em} \setR^{d \times
  n}$ and
$\phi_a,(\phi^*)_a\hspace*{-0.1em}:\hspace*{-0.1em}\setR^{\ge
  0}\hspace*{-0.1em}\to\hspace*{-0.1em} \setR^{\ge
  0}$,~${a\hspace*{-0.1em}\ge\hspace*{-0.1em} 0}$, is best explained
by the following proposition (cf.~\cite{die-ett,dr-nafsa}).
\begin{proposition}
  \label{lem:hammer}
  Let $\AAA$ satisfy Assumption~\ref{ass:1} for a balanced
  N-function ${\phi}$. Then, uniformly with respect to 
  $\bfP, \bfQ \in \setR^{d \times n}$, we have that
    \begin{align}
      \label{eq:hammera}
      \big(\AAA(\bfP) - \AAA(\bfQ)\big)
      :(\bfP-\bfQ ) &\sim  \smash{\abs{ \bfF(\bfP) - \bfF(\bfQ)}^2}
      \sim \phi_{\abs{\bfP}}(\abs{\bfP - \bfQ})\,,
      \\
      \smash{\abs{ \bfF^*(\bfP) - \bfF^*(\bfQ)}^2}
      \label{eq:hammerf}
      &\sim  \smash{\smash{\big(\phi^*\big)}_{\smash{\abs{\bfP}}}(\abs{\bfP - \bfQ})}\,,
      \\
       \label{eq:hammere}  
      \abs{\AAA(\bfP) - \AAA(\bfQ)} &\sim
       \smash{\phi'_{\abs{\bfP}}(\abs{\bfP -
      	\bfQ})}\,.\\[-2mm]
      \intertext{Moreover,  uniformly with respect to $\smash{\bfQ \in \setR^{d \times n}}$, we have that\vspace*{-2mm}} 
      \label{eq:hammerd}
      \AAA(\bfQ) \cdot \bfQ \sim  \smash{\abs{\bfF(\bfQ)}^2} &\sim
      \phi(\abs{\bfQ})\,,\\
       \label{eq:hammere2}  
      \abs{\AAA(\bfP)} &\sim  \smash{\phi'(\abs{\bfP})}\,.
    \end{align}
  The constants in \eqref{eq:hammera}--\eqref{eq:hammere2} depend only on the characteristics of ${\AAA}$ and $\phi$.
\end{proposition}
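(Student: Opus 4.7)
The plan is to reduce every equivalence in \eqref{eq:hammera}--\eqref{eq:hammere2} to an integral representation along the segment $[\bfQ,\bfP]$, combined with the pointwise bounds from Assumption~\ref{ass:1} and an Acerbi--Fusco type averaging estimate for balanced N-functions.

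I would begin with \eqref{eq:hammera} and \eqref{eq:hammere}. Setting $\bfP_s := \bfQ + s(\bfP-\bfQ)$, the fundamental theorem of calculus gives
\begin{align*}
\AAA(\bfP)-\AAA(\bfQ) = \int_0^1 D\AAA(\bfP_s)[\bfP-\bfQ]\,\mathrm{d}s\,.
\end{align*}
Contracting with $\bfP-\bfQ$ and invoking \eqref{1.4b}--\eqref{1.5b} produces the two-sided bound
\begin{align*}
\bigl(\AAA(\bfP)-\AAA(\bfQ)\bigr):(\bfP-\bfQ) \;\sim\; \abs{\bfP-\bfQ}^2 \int_0^1 \frac{\phi'(\abs{\bfP_s})}{\abs{\bfP_s}}\,\mathrm{d}s\,,
\end{align*}
together with the analogous estimate for $\abs{\AAA(\bfP)-\AAA(\bfQ)}$ obtained by removing one factor of $\abs{\bfP-\bfQ}$. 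The key technical step is then the integral equivalence
\begin{align*}
\int_0^1 \frac{\phi'(\abs{\bfP_s})}{\abs{\bfP_s}}\,\mathrm{d}s \;\sim\; \frac{\phi'(\abs{\bfP}+\abs{\bfP-\bfQ})}{\abs{\bfP}+\abs{\bfP-\bfQ}}\,,
\end{align*}
valid for any balanced N-function $\phi$, which I would verify by splitting into the cases $\abs{\bfP-\bfQ}\le\tfrac12\abs{\bfP}$ (where \eqref{eq:equi1} makes $s\mapsto\phi'(\abs{\bfP_s})/\abs{\bfP_s}$ essentially constant on $[0,1]$) and $\abs{\bfP-\bfQ}>\tfrac12\abs{\bfP}$ (where the integral concentrates on the subinterval where $\abs{\bfP_s}\sim\abs{\bfP-\bfQ}$). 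By \eqref{eq:phi_shifted}, the right-hand side coincides with $\phi'_{\abs{\bfP}}(\abs{\bfP-\bfQ})/\abs{\bfP-\bfQ}$, which immediately yields \eqref{eq:hammere}; multiplying by $\abs{\bfP-\bfQ}^2$ and using the relation $\phi'_{\abs{\bfP}}(t)\,t\sim\phi_{\abs{\bfP}}(t)$ (a consequence of \eqref{eq:equi2} for $\phi_{\abs{\bfP}}$, which satisfies $\Delta_2$ uniformly by Lemma~\ref{lem:shift}(i)) produces the outer part of \eqref{eq:hammera}.

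For the middle term of \eqref{eq:hammera} and for \eqref{eq:hammerf}, I would repeat the same scheme with $\bfF$ and $\bfF^*$ in place of $\AAA$. A direct computation from \eqref{eq:def_F}, combined with the balancedness of $\phi$ via Lemma~\ref{lem:bal}(iii), produces the ellipticity/growth pair $\abs{D\bfF(\bfP)[\bfQ]}^2\sim(\phi'(\abs{\bfP})/\abs{\bfP})\,\abs{\bfQ}^2$ for $\bfP\neq\bfzero$, together with the analogue for $\bfF^*$ with $\phi^*$ in place of $\phi$ (using $(\phi^*)'=(\phi')^{-1}$). The same integral argument then gives $\abs{\bfF(\bfP)-\bfF(\bfQ)}^2\sim\phi_{\abs{\bfP}}(\abs{\bfP-\bfQ})$, and after matching the shifts via Lemma~\ref{lem:shift}(ii), $\abs{\bfF^*(\bfP)-\bfF^*(\bfQ)}^2\sim(\phi^*)_{\abs{\bfP}}(\abs{\bfP-\bfQ})$. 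The remaining assertions \eqref{eq:hammerd} and \eqref{eq:hammere2} then follow by specializing to $\bfP=\bfzero$ (for \eqref{eq:hammerd}, since $\phi_0=\phi$) and to $\bfQ=\bfzero$ (for \eqref{eq:hammere2}, invoking the $\Delta_2$-condition to absorb $\phi'_{\abs{\bfP}}(\abs{\bfP})\sim\phi'(\abs{\bfP})$).

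The main obstacle is establishing the integral equivalence for $\int_0^1\phi'(\abs{\bfP_s})/\abs{\bfP_s}\,\mathrm{d}s$ above; everything else is bookkeeping using the definitions of $\phi_a$ and $(\phi^*)_a$, the consequences of $\Delta_2$-uniformity for the shifted functions from Lemma~\ref{lem:shift}(i), and the change-of-shift Lemma~\ref{lem:change2}, which in particular recovers symmetry of the equivalences in $\bfP$ and $\bfQ$.
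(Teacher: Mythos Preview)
The paper does not give its own proof of this proposition: it merely cites the references \cite{die-ett,dr-nafsa}, where these equivalences are established. Your sketch follows precisely the classical strategy from those references --- the segment representation via the fundamental theorem of calculus, the Acerbi--Fusco averaging lemma $\int_0^1 \phi'(\abs{\bfP_s})/\abs{\bfP_s}\,\mathrm{d}s \sim \phi'(\abs{\bfP}+\abs{\bfP-\bfQ})/(\abs{\bfP}+\abs{\bfP-\bfQ})$, and the observation that $\bfF$, $\bfF^*$ themselves have $\kappa$- respectively $\kappa^*$-structure (Remark~\ref{rem:pot}) so that the same machinery applies. One small point worth making explicit: the pointwise two-sided bound on $\abs{D\bfF(\bfP)[\bfQ]}$ does not by itself control $\bigl|\int_0^1 D\bfF(\bfP_s)[\bfP-\bfQ]\,\mathrm{d}s\bigr|$ from below, because of possible cancellation; for the lower bound you should contract with $\bfP-\bfQ$ and use the positivity of the eigenvalues of $D\bfF$ (exactly as you do for $\AAA$ via \eqref{1.4b}), then recover the modulus estimate by Cauchy--Schwarz. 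With that caveat your plan is correct and matches the literature the paper cites.
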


Combining \eqref{eq:phi*phi'} and \eqref{eq:hammerf}--\eqref{eq:hammerd}, we obtain, uniformly~with~respect~to~${\bfQ \in \Rdn}$,
\begin{align}
  \label{eq:hammerg}  
   \smash{\abs{\bF^*(\AAA(\bfQ))}^2 \sim
  \varphi^*(\abs{\AAA(\bfQ)}) \sim
  \abs{\bfF(\bQ)}^2\sim\varphi(\abs{\bfQ})\,.}
\end{align}
In the same way as in \cite[Lemma 2.8, Corollary 2.9, Lemma
2.10]{dkrt-ldg}, one~can~show~that
\begin{lemma}\label{lem:F-F^*-diff}
  Let $\AAA$ satisfy Assumption~\ref{ass:1} for a balanced N-function
  ${\phi}$. Then, uniformly with respect to $t \geq 0$ and
  $\bQ, \bP \in\Rdn $, we  have that
  \begin{align}
    \label{eq:F-F*1}
    (\phi^*)_{\abs{\AAA(\bfP)}}(t) &\sim (\phi_{\abs{\bfP}})^*(t)\,,
    \\
    \label{eq:F-F*2}
    (\vp^*) _{|\AAA ( \bfP)|} (|\AAA (\bQ) - \AAA ( \bfP) |)&\sim
    \vp_{|\bfP|} (|\bQ - \bfP|)\,,
    \\
    \label{eq:F-F*3}
     \smash{\abs{\bF^*(\AAA(\bfQ))-\bF^*(\AAA(\bfP))}^2}
    &\sim  \smash{\abs{\bfF(\bQ)-\bfF(\bP)}^2}\,
  \end{align}
  with constants depending only on the characteristics of ${\AAA}$ and $\phi$.
\end{lemma}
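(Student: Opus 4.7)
The plan is to prove the three equivalences in the order listed, using each one to feed into the next, exactly as the three numbered results in \cite{dkrt-ldg} cascade.

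For \eqref{eq:F-F*1}, I would start from Lemma~\ref{lem:shift}(ii) applied to the balanced N-function $\phi$, which yields $(\phi_{\abs{\bfP}})^*(t)\sim (\phi^*)_{\phi'(\abs{\bfP})}(t)$ uniformly in $t,\abs{\bfP}\ge 0$. The remaining task is to replace the shift $\phi'(\abs{\bfP})$ with the equivalent quantity $\abs{\AAA(\bfP)}$. The former is identified with the latter by \eqref{eq:hammere2}, and since $\phi^*\in\Delta_2$ (Lemma~\ref{lem:bal}(ii)), the standard fact that $\psi_a\sim\psi_b$ whenever $a\sim b$ (a direct consequence of \eqref{eq:phi_prime_shift} and of $\psi_a$ being increasing in $a$ for $\psi$ of $\Delta_2$-type) lets me pass from $(\phi^*)_{\phi'(\abs{\bfP})}$ to $(\phi^*)_{\abs{\AAA(\bfP)}}$. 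Chaining these equivalences closes the argument for \eqref{eq:F-F*1}.

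For \eqref{eq:F-F*2}, I would apply \eqref{eq:F-F*1} with the specific choice $t=\abs{\AAA(\bfQ)-\AAA(\bfP)}$ to obtain
\[
(\phi^*)_{\abs{\AAA(\bfP)}}(\abs{\AAA(\bfQ)-\AAA(\bfP)})\sim (\phi_{\abs{\bfP}})^*(\abs{\AAA(\bfQ)-\AAA(\bfP)}).
\]
Next I use \eqref{eq:hammere}, $\abs{\AAA(\bfQ)-\AAA(\bfP)}\sim \phi'_{\abs{\bfP}}(\abs{\bfQ-\bfP})$, together with the uniform $\Delta_2$-property of the shifted family from Lemma~\ref{lem:shift}(i), to move this equivalent argument inside the convex conjugate. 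Finally, \eqref{eq:phi*phi'} applied to the shifted N-function $\phi_{\abs{\bfP}}$ (which is in $\Delta_2$ uniformly in $\abs{\bfP}$, with its conjugate also in $\Delta_2$ uniformly, by Lemma~\ref{lem:shift}) gives $(\phi_{\abs{\bfP}})^*(\phi'_{\abs{\bfP}}(s))\sim \phi_{\abs{\bfP}}(s)$ for all $s\ge 0$; plugging $s=\abs{\bfQ-\bfP}$ yields \eqref{eq:F-F*2}.

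For \eqref{eq:F-F*3}, I would simply apply \eqref{eq:hammerf} with $\bfP,\bfQ$ replaced by $\AAA(\bfP),\AAA(\bfQ)$ to obtain
\[
\abs{\bF^*(\AAA(\bfQ))-\bF^*(\AAA(\bfP))}^2\sim (\phi^*)_{\abs{\AAA(\bfP)}}(\abs{\AAA(\bfQ)-\AAA(\bfP)}),
\]
then invoke \eqref{eq:F-F*2} and finally \eqref{eq:hammera} to get $\phi_{\abs{\bfP}}(\abs{\bfQ-\bfP})\sim\abs{\bF(\bfQ)-\bF(\bfP)}^2$. All constants track only through the characteristics of $\AAA$ and $\phi$, as required.

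The principal obstacle is the \emph{change-of-shift} move hidden inside the proof of \eqref{eq:F-F*1}: one must justify that shifted N-functions with equivalent shifts are pointwise equivalent, with constants depending only on the $\Delta_2$-constants. This is where some care is needed — the cleanest route is via the monotonicity of $a\mapsto \psi'_a(t)$ combined with the $\Delta_2$-bound, but one must argue it uniformly in $t$. Once this bridge is in place, the remaining two statements reduce to mechanical substitutions using results already collected in Proposition~\ref{lem:hammer} and Lemma~\ref{lem:shift}.
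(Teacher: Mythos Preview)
Your proposal is correct and follows exactly the route the paper points to: the paper itself does not give a proof but merely writes ``In the same way as in \cite[Lemma 2.8, Corollary 2.9, Lemma 2.10]{dkrt-ldg}, one can show that\ldots'', and your three-step cascade (Lemma~\ref{lem:shift}(ii) plus \eqref{eq:hammere2} for \eqref{eq:F-F*1}; then \eqref{eq:hammere} and \eqref{eq:phi*phi'} for \eqref{eq:F-F*2}; then \eqref{eq:hammerf} and \eqref{eq:hammera} for \eqref{eq:F-F*3}) is precisely the argument behind those cited results. Your identification of the change-of-shift step $\psi_a\sim\psi_b$ for $a\sim b$ as the only point requiring care is accurate, and your suggested justification via monotonicity and the $\Delta_2$-condition is the standard one.
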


\begin{lemma}
  \label{cor:change3}
  Let $\AAA$ satisfy Assumption~\ref{ass:1} for a balanced N-function
  ${\phi}$. Then, uniformly with respect to  $t \geq 0$ and
  $\bQ, \bP \in\Rdn $, we  have that
  \begin{align*}
    \big(\phi_{\abs{\bfP}})(t) &\le c\, \big(\phi_{\abs{\bfQ}}\big)(t)
    + c\,\abs{\bfF(\bfP) - \bfF(\bfQ)}^2\,,
    \\
    \big(\phi_{\abs{\bfP}})^*(t) &\le c\,
    \big(\phi_{\abs{\bfQ}}\big)^*(t) + c\,\abs{\bfF(\bfP) -
      \bfF(\bfQ)}^2\,,
    \\
    \smash{ \smash{\big(\phi^*\big)}_{\smash{\abs{\AAA(\bfP)}}}(t)} &\le c\,
     \smash{\smash{\big(\phi^*\big)}_{\smash{\abs{\AAA(\bfQ)}}}(t) }+ c\,\abs{\bfF(\bfP) -
      \bfF(\bfQ)}^2\,,
    \\
    \smash{ \smash{\big(\phi^*\big)}_{\smash{\abs{\bfP}}}(t) }&\le c\,
    \smash{ \smash{\big(\phi^*\big)}_{\smash{\abs{\bfQ}}}(t) }+ c\,\abs{\bfF^*(\bfP) -
      \bfF^*(\bfQ)}^2\,
  \end{align*}
  with constants depending only on the characteristics of ${\AAA}$ and $\phi$.
\end{lemma}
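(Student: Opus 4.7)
The plan is to derive each of the four inequalities by combining the ``soft'' change-of-shift inequality in Lemma~\ref{lem:change2} (which controls one shifted N-function by another up to an error of $\psi_{|\bfP|}(|\bfP-\bfQ|)$) with the identifications in Proposition~\ref{lem:hammer} and Lemma~\ref{lem:F-F^*-diff}, which express these ``error'' modular terms in the form $|\bfF(\bfP)-\bfF(\bfQ)|^2$ (or $|\bfF^\ast(\bfP)-\bfF^\ast(\bfQ)|^2$). The general scheme is thus: apply Lemma~\ref{lem:change2} for the appropriate N-function at the appropriate points, then translate the resulting shifted-modular remainder into an $\bfF$- or $\bfF^\ast$-difference using Proposition~\ref{lem:hammer} or Lemma~\ref{lem:F-F^*-diff}.

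Concretely, for the first inequality I fix any $\delta\in(0,1)$ (e.g.\ $\delta=\tfrac12$) and apply the first estimate of \eqref{eq:34c1} with $\psi=\phi$; the remainder $\phi_{|\bfP|}(|\bfP-\bfQ|)$ is then equivalent to $|\bfF(\bfP)-\bfF(\bfQ)|^2$ by~\eqref{eq:hammera}, which gives the claim with a new constant $c$. For the second inequality the argument is identical, but I use the second estimate of~\eqref{eq:34c1}: the remainder is again $\phi_{|\bfP|}(|\bfP-\bfQ|)\sim|\bfF(\bfP)-\bfF(\bfQ)|^2$ by~\eqref{eq:hammera}.

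For the third inequality I invoke Lemma~\ref{lem:change2} with $\psi=\phi^\ast$ (which by Lemma~\ref{lem:bal}(i)--(ii) is also a balanced N-function satisfying the $\Delta_2$-condition together with its conjugate) and with the points $\AAA(\bfP),\AAA(\bfQ)\in\Rdn$ in place of $\bfP,\bfQ$. This yields the desired left-hand side plus the remainder $(\phi^\ast)_{|\AAA(\bfP)|}(|\AAA(\bfP)-\AAA(\bfQ)|)$, which by~\eqref{eq:F-F*2} is equivalent to $\phi_{|\bfP|}(|\bfP-\bfQ|)$ and thus, by~\eqref{eq:hammera}, to $|\bfF(\bfP)-\bfF(\bfQ)|^2$. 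For the fourth inequality I again use Lemma~\ref{lem:change2} applied to $\phi^\ast$, but now at the points $\bfP,\bfQ$ directly; this time the remainder $(\phi^\ast)_{|\bfP|}(|\bfP-\bfQ|)$ is equivalent to $|\bfF^\ast(\bfP)-\bfF^\ast(\bfQ)|^2$ by~\eqref{eq:hammerf}.

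There is no real obstacle beyond correctly matching the ``shift base'' and the ``function'' in each of the four invocations of Lemma~\ref{lem:change2}: the trickiest line is the third one, where the shift base must be $\AAA(\bfP)$ (not $\bfP$) so that Lemma~\ref{lem:F-F^*-diff}\eqref{eq:F-F*2} applies cleanly and converts an $\AAA$-difference on the dual side into an $\bfF$-difference on the primal side. All constants depend only on the characteristics of $\AAA$ and $\phi$, because every tool invoked (Lemma~\ref{lem:bal}, Lemma~\ref{lem:change2}, Proposition~\ref{lem:hammer}, Lemma~\ref{lem:F-F^*-diff}) has that dependence.
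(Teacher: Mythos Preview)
Your proposal is correct and is precisely the intended argument: the paper does not spell out a proof for this lemma (its label suggests it is meant as an immediate corollary of the change-of-shift lemma), and the derivation it has in mind is exactly yours --- apply Lemma~\ref{lem:change2} with $\psi=\phi$ or $\psi=\phi^*$ at the appropriate pair of points, then rewrite the remainder $\psi_{|\bfP|}(|\bfP-\bfQ|)$ via \eqref{eq:hammera}, \eqref{eq:hammerf}, or \eqref{eq:F-F*2}.
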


\begin{remark}[Natural energy spaces]\label{rem:nfs}
  {\rm \!If $\AAA$ satisfies Assumption~\ref{ass:1}~for~a~\mbox{balanced} N-function
  ${\phi}$, we see from
    \eqref{eq:hammerd} and \eqref{eq:hammerg} that  $\bu\in
    W^{1,\phi}(\Omega)$, ${\bL=\nabla \bu\in L^{\phi}(\Omega)}$~and
    ${\bfA =\AAA(\bL)\in L^{\phi^*}(\Omega)}$.} 
\end{remark}

\begin{remark}[Natural distance]
  \label{rem:natural_dist}
  {\rm
    If $\AAA$ satisfies Assumption~\ref{ass:1} for a balanced
    N-function ${\phi}$, we see from the previous results that for all $\bfu, \bfw \in W^{1,\phi}(\Omega)$
    \begin{align*}
      \hskp{\AAA(\nabla \bfu) \!-\!
      \AAA(\nabla\bfw)}{\nabla\bfu \!-\! \nabla \bfw}
      &\sim
        \norm{\bfF(\nabla\bfu) \!-\! \bfF(\nabla\bfw)}_2^2 \,\sim
        \int_\Omega\!  \phi_{\abs{\nabla\bfu}}(\abs{\nabla\bfu \!-\!
        \nabla\bfw}) \,\mathrm{d}x\,,
    \end{align*}
    where the constants depend only on the characteristics of $\AAA$ and
    $\phi$. In the context of $p$-Laplace problems, the quantity $\bF$
    was introduced in \cite{acerbi-fusco}, while the last expression
    equals the quasi-norm introduced in~\cite{barliu}
    raised~to~the~power~${\rho = \max \set{p,2}}$. We refer to all
    three equivalent quantities as the \textbf{natural distance}. This
    name expresses the fact, that the  natural distance provides the
    appropriate error measure for problem~\eqref{eq:p-lap} rather than the $W^{1,p}(\Omega)$-semi-norm. 
  }
\end{remark}

\subsection{DG spaces, jumps and averages}\label{sec:dg-space}

Let $(\mathcal{T}_h)_{h>0}$ be a family of triangulations of our
domain~$\Omega$ consisting of \mbox{$n$-dimensional} simplices $K$.
Here, the parameter $h>0$, refers to the maximal mesh-size, i.e., if we set $h_K\vcentcolon=\textup{diam}(K)$ for all $K\in \mathcal{T}_h$, then $h\vcentcolon=\max_{K\in \mathcal{T}_h}{h_K}$.
For simplicity, we always assume, in the
paper, that  $h \le 1$. For a simplex $K\! \in\! \mathcal{T}_h$,
we denote by $\rho_K\!>\!0$, the supremum of diameters~of~inscribed~balls. We assume that there exists a constant $\omega_0>0$, independent
of $h>0$, such that ${h_K}{\rho_K^{-1}}\le
\omega_0$ for all $K \in \mathcal{T}_h$. The smallest such constant~is~called~the~chunkiness~of $(\mathcal{T}_h)_{h>0}$. Note that, in the following, all
constants may depend on the chunkiness~$\omega_0$, but are independent
of~$h>0$.  For $K \in \mathcal{T}_h$, let $S_K$ denote the neighborhood of~$K$,~i.e., the patch
$S_K$ is the union of all simplices of~$\mathcal{T}_h$ touching~$K$.
We assume further for our triangulation that the interior of
each~$S_K$ is connected. 
Under~these~assumptions,  $\abs{K} \sim
\abs {S_K}$ uniformly in  $K\in \mathcal{T}_h$ and $h>0$, and the
number of simplices in $S_K$ and  patches to which a simplex belongs
to are uniformly bounded with respect to $K \in \mathcal{T}_h$~and~$h>0$. We define the faces of
$\mathcal{T}_h$ as follows: an interior face of $\mathcal{T}_h$ is the
non-empty interior of $\partial K \cap \partial K'$, where $K, K'$
are two adjacent elements of $\mathcal{T}_h$. For the face $\gamma\vcentcolon=
\partial K \cap \partial K'$, we use the notation $S_\gamma\vcentcolon= K \cup
K'$. A boundary face of $\mathcal{T}_h$ is the non-empty interior of
$\partial K \cap \partial \Omega$, where $K$ is a boundary element of
$\mathcal{T}_h$.  For the face $\gamma\vcentcolon= \partial K \cap \partial
\Omega$, we use the notation $S_\gamma\vcentcolon= K $. By $\smash{\Gamma_h^{i}}$, $\Gamma_D$, and
$\Gamma_N$, we denote the~interior,~the~Dirichlet~and the Neumann faces,
resp., and put $\Gamma_h\vcentcolon= \smash{\Gamma_h^{i}}\cup \Gamma_D \cup \Gamma_N$.
We assume~that~each~${K \in \mathcal{T}_h}$ has at most~one~face~from~$\Gamma_D \cup \Gamma_N$.  We introduce the following scalar products~on~$\Gamma_h$
\begin{align*}
  \skp{f}{g}_{\Gamma_h} \vcentcolon= \sum_{\gamma \in \Gamma_h} {\langle f, g\rangle_\gamma},\quad\text{ where }\quad\langle f, g\rangle_\gamma\vcentcolon=\smash{\int_\gamma f g \,\textup{d}s}\quad\text{ for all }\gamma\in \Gamma_h,
\end{align*}
if all the integrals are well-defined. Similarly, we define
$\skp{\cdot}{\cdot}_{\Gamma_D}$, $\skp{\cdot}{\cdot}_{\Gamma_N}$~and~$\skp{\cdot}{\cdot}_{\smash{\Gamma_h^{i}}}$.  We extend the notation of
modulars to the sets $\smash{\Gamma_h^{i}}$, 
$\Gamma_D$, and ${\smash{\Gamma_h^{i}}\cup \Gamma_D}$, i.e., we
define ${\rho_{\psi,B}(f)\!\vcentcolon=\! \smash{\int _B
\psi(\abs{f})\,\textup{d}s}}$ for every $f \!\in\! \smash{L^\psi(B)}$, where $B\!=\! \smash{\Gamma_h^{i}}$ or
${B\!=\! \Gamma_D}$~or~${B\!= \!\smash{\Gamma_h^{i}}\cup \Gamma_D}$.

For $m \in \setN_0$ and $K\in \mathcal{T}_h$,
we denote by ${\mathcal P}_m(K)$, the space of
scalar, vector-valued or tensor-valued polynomials of degree at most $m$ on $K$.
Given a triangulation~of~$\Omega$ with the above properties, given an
N-function $\psi$, and given $k \in \setN_0$,
we define
\begin{align}
  \begin{split}
    U_h^k&\vcentcolon=\big\{\bfu_h\in L^1(\Omega)^d\,\fdg \bfu_h|_K\in \mathcal{P}_k(K)^d\text{ for all }K\in \mathcal{T}_h\big\}\,,\\
    X_h^k&\vcentcolon=\big\{\bfX_h\in L^1(\Omega)^{d\times n}\fdg \bfX_h|_K\in \mathcal{P}_k(K)^{d\times n}\text{ for all }K\in \mathcal{T}_h\big\}\,,\\
        W^{1,\psi}(\mathcal T_h)&\vcentcolon=\big\{\bw_h\in L^1(\Omega)\fdg \bw_h|_K\in W^{1,\psi}(K)\text{ for all }K\in \mathcal{T}_h\big\}\,.
  \end{split}\label{eq:2.19}
\end{align}
Note that  $W^{1,\psi}(\Omega)\!\subseteq\! \WDGpsi$ and
$\Vhk\!\subseteq \!\WDGpsi$. We denote by ${\PiDG\!:\!L^1(\Omega)^d\!\to\! U_h^k}$, the (local)
$L^2$-projection into $U_h^k$, which for every $\bfu \hspace*{-0.18em}\in\hspace*{-0.18em}
L^1(\Omega)$ and $\bz_h\hspace*{-0.18em}
\in\hspace*{-0.18em} \Vhk$ is~defined~via
\begin{align}
  \label{eq:PiDG}
  \bighskp{\PiDG \bfu}{\bz_h}=\hskp{\bu}{\bz_h}\,.
\end{align}
Analogously, we define the (local)
$L^2$--projection into $X_h^k$, i.e., ${\PiDG: L^1(\Omega)^{d\times n} \to \Xhk}$. 

For $\bw_h\in \WDGpsi$, we denote by $\nb_h \bw_h\in L^\psi(\Omega)$
the \textbf{local gradient}, defined via
$(\nb_h \bw_h)|_K\!\vcentcolon=\!\nb(\bw_h|_K)$ for~all~${K\!\in\!\mathcal{T}_h}$.  For
every $K\!\in \!\mathcal{T}_h$, ${\bw_h\!\in\! \WDGpsi}$~admits~an
interior~trace~${\text{tr}^K(\bw_h)\in L^{\psi}(\pa K)}$. For each
face $\gamma\in \Gamma_h$ of a~given~simplex~${ T\in \mathcal{T}_h}$, we
define this interior trace by
$\smash{\textup{tr}^K_\gamma(\bw_h)\in L^{\psi}(\gamma)}$. Then, for
$\bw_h\in \WDGpsi$ and interior faces $\gamma\in \Gamma_{I}$ shared by
adjacent elements $K^-_\gamma, K^+_\gamma\in \mathcal{T}_h$,
we~denote~by
\begin{align}
  \{\bw_h\}_\gamma&\vcentcolon=\smash{\frac{1}{2}}\big(\textup{tr}_\gamma^{K^+}(\bw_h)+
  \textup{tr}_\gamma^{K^-}(\bw_h)\big)\in
  L^{\psi}(\gamma)\,, \label{2.20}
  \\
  \llbracket\bw_h\otimes\bfn\rrbracket_\gamma
  &\vcentcolon=\textup{tr}_\gamma^{K^+}(\bw_h)\otimes\bfn^+_\gamma+
    \textup{tr}_\gamma^{K^-}(\bw_h)\otimes\bfn_\gamma^- 
    \in L^\psi(\gamma)\,,\label{eq:2.21} 
\end{align}
the \textbf{average} and the \textbf{normal jump}, resp., of $\bw_h$ on $\gamma$.
Moreover,~for~every~$\bfw_h\in \WDGpsi$ and boundary faces $\gamma\in\Gamma_D$, we define boundary averages  and boundary jumps, resp., via
       \begin{align}
  \{\bfw_h\}_\gamma&\vcentcolon=\textup{tr}^\Omega_\gamma(\bfw_h) \in L^\psi(\gamma)\,,\label{eq:2.23a} \\
  \llbracket \bfw_h\otimes\bfn\rrbracket_\gamma&\vcentcolon=
  \textup{tr}^\Omega_\gamma(\bfw_h)\otimes\bfn \in L^\psi(\gamma)\,,\label{eq:2.23} 
\end{align}
where $\bfn:\partial\Omega\to \mathbb{S}^{d-1}$ denotes the unit
normal vector field to $\Omega$ pointing outward.  Analogously, we
define $\{\bfX_h\}_\gamma$ and
$ \llbracket\bfX_h\bfn\rrbracket_\gamma $~for every $\bfX_h \in \Xhk$
and $\gamma\in \smash{\Gamma_h^{i}} \cup \Gamma_D$. We omit the index $\gamma$ if
there is no danger~of~confusion.
For every $k\in \mathbb{N}_0$, we define for a given face $\gamma\in \smash{\Gamma_h^{i}}\cup\Gamma_D$, the
\textbf{(local)~jump~\mbox{operator}}
${\boldsymbol{\mathcal{R}}_{h,\gamma}^k :\WDGpsi \to X_h^k}$ for every $\bw_h\in \smash{\WDGpsi}$ (using
Riesz representation)~via
\begin{align}
  \big(\boldsymbol{\mathcal{R}}_{h,\gamma}^k(\bw_h),\bfX_h\big)
  \vcentcolon=\big\langle \llbracket\bw_h\otimes\bfn\rrbracket_\gamma,\{\bfX_h\}_\gamma\big\rangle_\gamma
  \quad\text{ for all }\bfX_h\in X_h^k\,,\label{eq:2.25}
\end{align}
and the \textbf{(global) jump operator} via
\begin{align}\label{def:Rhk}
  \Rhk=\boldsymbol{\mathcal R}^k_{h, \Gamma_D}\vcentcolon=\sum_{\gamma\in \smash{\Gamma_h^{i}} \cup
  \Gamma_D}{\boldsymbol{\mathcal{R}}_{\gamma,h}^k}:\WDGpsi \to X_h^k\,,
\end{align}
which, by definition, for every $\bw_h\in \smash{\WDGpsi}$ and  $\bfX_h\in X_h^k$ satisfies
\begin{align}
  \big(\Rhk\bw_h,\bfX_h\big)=\big\langle
  \llbracket\bw_h\otimes\bfn\rrbracket,\{\bfX_h\}\big\rangle_{\smash{\Gamma_h^{i}}
  \cup \Gamma_D}\,.\label{eq:2.25.1}
\end{align}
Furthermore, for every $k\in \mathbb{N}_0$, we define the
\textbf{discrete gradient operator} or \textbf{DG gradient operator}
$  \Ghk \hspace*{-0.1em}=\hspace*{-0.1em}\boldsymbol{\mathcal G}^k_{h, \Gamma_D}\hspace*{-0.1em}:\hspace*{-0.1em}\WDGpsi\hspace*{-0.1em}\to\hspace*{-0.1em} L^\psi(\Omega)$,
for every $\bw_h\hspace*{-0.1em}\in\hspace*{-0.1em}
\smash{\WDGpsi}$~via\footnote{Note that we use for discrete gradients the notation from
  \cite{ern-book}, which is different from the one used in \cite{dkrt-ldg}.\vspace*{-20mm}}
\begin{align}
  \boldsymbol{\mathcal G}^k_{h}\bw_h=
  \boldsymbol{\mathcal G}^k_{h,\Gamma_D}\bw_h\vcentcolon=
  \nb_h\bw_h-\boldsymbol{\mathcal R}^k_{h,\Gamma_D}\bw_h
  \quad\text{ in }L^\psi(\Omega)\,.\label{eq:DGnablaR} 
\end{align}
In particular, for every $\bw_h\in \smash{\WDGpsi}$ and $\bfX_h\in X_h^k$, we have that 
\begin{align}
\big(\Ghk\bw_h,\bfX_h\big)=(\nb_h\bw_h,\bfX_h)
  -\big\langle \llbracket
  \bw_h\otimes\bfn_h\rrbracket,\{\bfX_h\}\big\rangle_{\smash{\Gamma_h^{i}} \cup\Gamma_D}
\,.  \label{eq:DGnablaR1}
\end{align}
Note that for every $\bfu\in W^{1,\psi}_{\Gamma_D}(\Omega)$, it holds $\Ghk \bfu=\nb\bfu
$ in $L^\psi(\Omega)$.

 We define the pseudo-modular
$\smash{m_{\psi,h}=m_{\psi,h,\Gamma_D}}$ and the modular
$\smash{M_{\psi,h}=M_{\psi,h,\Gamma_D}}$ on $\WDGpsi$ and $\WDGpsiD$~via
\begin{align}
  \begin{aligned}
    m_{\psi,h}(\bw_h)=m_{\psi,h,\Gamma_D}(\bw_h)&\vcentcolon= h\,\rho_{\psi,\smash{\Gamma_h^{i}}\cup \Gamma_D
    }\big(h^{-1}\jump{\bw_h\otimes \bn}\big)\,,
    \\
    M_{\psi,h}(\bw_h)=M_{\psi,h,\Gamma_D}(\bw_h)&\vcentcolon= \rho_{\psi,\Omega}(\nabla_h \bw_h) +
    m_{\psi,h,\Gamma_D}(\bw_h)\,.
  \end{aligned}\label{def:mh}
\end{align}
The induced Luxembourg norm of the modular $M_{\psi,h}$ is
denoted by $\smash{\norm{\cdot}_{M_{\psi,h}}}$.~Note~that for every $\bfu \!\in\! \smash{\Wz^{1,\psi}(\Omega)}$, it holds
$m_{\psi,h}(\bfu)=0$ and
$M_{\psi,h}(\bfu) = \rho_{\psi,\Omega}(\nabla \bfu)$,~so~$M_{\psi,h}$~forms an extension of the modular
$\rho_{\psi,\Omega}(\nabla \,\cdot\,)$ on
$\smash{W^{1,\psi}_{\Gamma_D}(\Omega)}$ to the \DG~setting.~In~most~cases, we will
omit the index $\Gamma_D$ in $\smash{\boldsymbol{\mathcal R}^k_{h, \Gamma_D}}$,
$\smash{\boldsymbol{\mathcal G}^k_{h, \Gamma_D}}$, $\smash{m^k_{\psi, h, \Gamma_D}}$,~and~$\smash{M^k_{\psi,h, \Gamma_D}}$,~and simply write $\smash{\boldsymbol{\mathcal R}^k_{h}}$,
$\smash{\boldsymbol{\mathcal G}^k_{h}}$, $\smash{m^k_{\psi, h}}$, and
$\smash{M^k_{\psi,h}}$, respectively.

\begin{remark}
  In the case $\psi=\phi$, due to \eqref{eq:hammerd}, for every $\bw_h\in \WDGphi$, it holds
  \begin{align*}
      m_{\phi,h}(\bw_h)&\sim h\, \bignorm{
        \bF\big(h^{-1}\jump{\bw_h\otimes \bn}\big)}_{2,\smash{\Gamma_h^{i}}\cup
        \Gamma_D}^2\,,
      \\
      M_{\phi,h}(\bw_h)&\sim \norm{\bF(\nabla_h\bw_h)}^2_{2,\Omega}+
      h\,\bignorm{\bF\big(h^{-1}\jump{\bw_h\otimes
          \bn}\big)}_{2,\smash{\Gamma_h^{i}}\cup \Gamma_D}^2\,.
  \end{align*}
\end{remark}

\section{Fluxes and LDG formulations}
\label{sec:ldg}

In order to obtain the LDG formulation of \eqref{eq:p-lap} for given
$k \in \setN$, we multiply the equations in \eqref{eq:dg-p-lap}$_1$ by
$\bfX_h \in \Xhk$, $\bfY_h\in \Xhk$ and $\bfz_h\in \Vhk$, resp., use~partial~integration, replace in the volume integrals the fields $\bu$,
$\bL$, $\bA$ and $\bfG$ by the discrete objects $\bu_h$, $\bL_h$, $\bA_h$ and $\PiDG \bfG$,
resp., and in the surface integrals $\bu$, $\bA$ and $\bfG$ by~the~numerical~fluxes $\smash{\widehat{\bfu}_h\vcentcolon=\widehat{\bfu}(\bu_h)}$, 
$\smash{\widehat{\bfA}_h\vcentcolon=\widehat{\bfA}(\bu_h,\bA_h,\bL_h)}$ and $\smash{\widehat{\bfG}_h\vcentcolon=\widehat{\bfG}(\PiDG\bfG)}$, and obtain
\begin{align}
    \int_K \bfL_h : \bfX_h\,\textup{d}x &= -\int_K \bfu_h
    \cdot\divergence \bfX_h\,\textup{d}x + \int_{\partial K} \widehat{\bfu}_h
    \cdot (\bfX_h \bfn)\,\textup{d}s\,,\notag
    \\
    \int_K \bfA_h : \bfY_h \,\textup{d}x &= \int_K \AAA(\bfL_h) : \bfY_h\,\textup{d}x\,,\label{eq:flux-K}
    \\
    \int_K \bfA_h : \nabla_h \bfz_h\,\textup{d}x &= \int_K \bfg \cdot
    \bfz_h +\PiDG\bfG :\nabla _h \bfz_h\,\textup{d}x +\int_{\partial K} \bfz_h \cdot
    \big (\widehat{\bfA}_h \bfn-\widehat{\bfG}_h \bfn\big)
    \,\textup{d}s \,.\notag
\end{align}
For given boundary data $\bu_D\!:\!\Gamma_D\!\to\! \setR^d$ and
$\ba_N\!:\!\Gamma_N\!\to\! \setR^d$,
we~denote~by~${\bu_D^*\! \in\! W^{1,\phi}(\Omega)}$ an extension of
$\bu_D$, which exists if $\bfu_D$ belongs to the trace space of
$ W^{\smash{1,\phi}}(\Omega)$, that is characterized in
\cite{Pal79}. \!For~the~\mbox{nonlinear}~\mbox{operator}
$\AAA\!:\!\setR^{d\times n}\!\to\! \setR^{d\times n}$~\mbox{having}~\mbox{$\phi$-structure}, we define for every $a \ge 0$ and
$\bfP \in \setR^{d\times n}$
\begin{align}
  \label{eq:flux}
  \AAA_a(\bfP) \vcentcolon=  \frac{\phi_a'(\abs{\bfP})}{\abs{\bfP}}\, \bfP\quad\text{ in }\setR^{d\times n}\,,
\end{align}
where $\phi _a$, $a\ge 0$, is the
shifted N-function of the balanced N-function
$\phi$.  Using these notions, the
numerical~fluxes~are~defined~via\footnote{Due to
  $\smash{\jump{\bfu_D^*\otimes \bfn}=\bfzero}$ on $\smash{\smash{\Gamma_h^{i}}}$, the flux
  $\smash{\flux{\bA}}$ depends only on $\smash{\bu_D:\Gamma_D\to \mathbb{R}^d}$. We have chosen to formulate
  the flux in this form for a more compact notation.\vspace*{-0.8cm}}
\begin{align}
  \label{def:flux-u}
  \flux{\bfu}(\bfu_h) &\vcentcolon= 
  \begin{cases}
    \avg{\bfu_h} &\quad\text{on $\smash{\Gamma_h^{i}}$}\,,
  \\
    \bfu_D^* &\quad\text{on $\Gamma_D$}\,,
    \\
    \bfu_h &\quad\text{on $\Gamma_N$}\,,
  \end{cases}
\end{align}
\begin{align}\label{def:flux-A}
  \hspace*{-3mm}\flux{\bfA}(\bu_h, \bfA_h,\bL_h) &\hspace*{-0.1em}\vcentcolon=\hspace*{-0.1em}
  \begin{cases}
    \avg{\bfA_h} \hspace*{-0.1em}- \hspace*{-0.1em}\alpha\, \AAA_{\aaa}\big(h^{-1}\jump{(\bfu_h \hspace*{-0.1em}-\hspace*{-0.1em} \bfu_D^*)\hspace*{-0.1em}\otimes\hspace*{-0.1em} \bfn}\big)
    &  \hspace*{1.5mm}\text{on $\hspace*{-0.1em}\smash{\Gamma_h^{i}} \hspace*{-0.1em}\cup\hspace*{-0.1em} \Gamma_D$}\,,
    \\
    \bfa_N \hspace*{-0.1em}\otimes\hspace*{-0.1em} \bfn &  \hspace*{1.5mm}\text{on $\hspace*{-0.1em}\Gamma_N$}\,,
  \end{cases}  \hspace*{-3mm}
\end{align}
and
\begin{align}
  \label{def:flux-F}
  \flux{\bfG}(\PiDG\bfG) &\vcentcolon= 
  \begin{cases}
    \bigavg{\PiDG \bfG} &\quad\text{on $\smash{\Gamma_h^{i}} \cup \Gamma_D$}\,,
    \\
    \bfzero &\quad\text{on $\Gamma_N$}\,,
  \end{cases}
\end{align}
resp., where $\alpha>0$ is some constant.  Note that we actually would
like to use the~shift $\abs{\nabla\bfu}$ in the flux
\eqref{def:flux-A}, which apparently is not possible since
$\nabla\bfu$ is not~known~a~priori. Since the distance of the discrete
shift $\aaa$ to $\abs{\nabla\bfu}$ 
is controlled 
(cf.~Lemma~\ref{lem:e5}), we resort to the discrete shift $\aaa$.

The fluxes are conservative
since~they~are~single-valued. The fluxes $\smash{\flux{\bu}}$ and $\smash{\flux{\bA}}$
are consistent, since $\smash{\flux{\bfu}(\bu) \!= \!\bfu}$,
$\smash{\flux{\bfA}(\bu, \bA,\bL)\! =\! \bfA}$ for regular functions $\bfu$ and $\bfA$ satisfying ${\bu=\bu_D^*}$ on $\Gamma_D$ and $\AAA(\nabla\bu) \bfn = \bfa_N$ on $\Gamma_N$.

Note that for the flux $\widehat{\bfu}(\bfu_h)$ in~\eqref{def:flux-u}, we
have that $\bfL_h = \Ghk \bfu_h$ if
${\bfu_D=\bfzero}$~(cf.~\eqref{eq:Lh}). Thus, this choice of the flux
is the natural DG equivalent of $\bfL = \nabla \bfu$. Moreover,
\eqref{eq:Lh} also implies that the flux
$\smash{\flux{\bfA}}(\bu_h, \bfA_h,\bL_h) $ is actually only depending
on $\bA_h$~and~$\bu_h$. It is a natural generalization of the
corresponding fluxes for the Laplace
problem~(cf.~\cite{arnold-brezzi}) and the $p$-Laplace problem
(cf.~\cite{ern-p-laplace}) taking into account the
$\phi$-structure~of~\eqref{eq:p-lap}. Note that it
coincides for $\phi(t)=t^2$ with a standard flux~for~the~Laplace
problem. The usage of the operator $\AAA_{\smash{\aaa}}$ in
\eqref{def:flux-A} instead of  $\AAA_0$ in \mbox{\cite[(2.33)]{dkrt-ldg}} is the key to better approximation properties~(cf.~Theorem~\ref{thm:error},~Corollary~\ref{cor:error},~Remark~\ref{rem:dis}). The flux
$\smash{\flux{\bfG}}$ is designed such that it yields the weak
form in~\eqref{eq:DG}.

Proceeding as in \cite{dkrt-ldg} and, in addition, using that $\PiDG$
is self-adjoint,~we~arrive~at~the \textbf{flux formulation}
of~\eqref{eq:p-lap}: For given data
$\bu_D\hspace*{-0.1em}\in\hspace*{-0.1em}\trace W^{1,\phi}(\Omega)$,
${\bfg\hspace*{-0.1em}\in\hspace*{-0.1em}
  \smash{L^{\phi^*}(\Omega)}}$,~${\bfG\hspace*{-0.1em}\in\hspace*{-0.1em}
  \smash{L^{\phi^*}(\Omega)}}$ and $\ba_N\in \smash{L^{\phi^*}(\Gamma_N)}$
find $(\bfu_h, \bL_h,\bA_h)^\top \in \Vhk \times \Xhk \times \Xhk$ such
that for all $(\bfX_h,\bfY_h,\bfz_h)^\top$ $ \in \Xhk \times \Xhk \times \Vhk$, it holds
\begin{align}\label{eq:DG}
  \begin{aligned}
    \hskp{\bfL_h}{\bfX_h} &= \bighskp{\nablaDG \bfu_h + \Rhk\bfu_D^*}{
      \bfX_h}\,,
    \\[-0.5mm]
    \hskp{\bfA_h}{\bfY_h} &= \hskp{\AAA(\bfL_h)}{
      \bfY_h}\,, 
    \\[-0.5mm]
    \bighskp{\bfA_h}{\nablaDG \bfz_h} &=
    \hskp{\bfg}{\bfz_h}+\bighskp{\bfG}{\Ghk\bfz_h}+
    \skp{\bfa_N}{\bfz_h}_{\Gamma_N}
    \\[-0.5mm]
    &\quad - \alpha \bigskp{\AAA_{\aaa}(h^{-1} \jump{(\bfu_h -\bu_D^*)\otimes
        \bfn})}{ \jump{\bfz_h \otimes \bfn}}_{\smash{\Gamma_h^{i}}\cup
      \Gamma_D}\,.
  \end{aligned}
\end{align}

Next, we want to eliminate in the system~\eqref{eq:DG} the
variables~${\bfL_h\in \Xhk}$~and~${\bfA_h\in \Xhk}$ to derive a system only expressed in
terms of the single variable $\bfu_h\in \Vhk$.~To~this~end, first note that from~\eqref{eq:DG}, it 
follows that
\begin{alignat}{2}
  \label{eq:Lh}
  \bfL_h &= \nablaDG \bfu_h + \Rhk \bfu_D^*&&\quad\text{ in }\Xhk\,,
  \\
  \label{eq:Ah}
  \bfA_h&= \PiDG \AAA(\bfL_h)&&\quad\text{ in }\Xhk\,.
\end{alignat}
If we insert \eqref{eq:Lh} and \eqref{eq:Ah} into \eqref{eq:DG}$_3$,
we find that for every $\bfz_h \in \Vhk$, there holds
\begin{align}
  \label{eq:primal}
  \begin{aligned}
    &\bighskp{\AAA(\nablaDG \bfu_h + \Rhk\bfu_D^*)}{\nablaDG
      \bfz_h}
    \\
    &=     \hskp{\bfg}{\bfz_h}+\bighskp{\bfG}{\Ghk\bfz_h}+
    \skp{\bfa_N}{\bfz_h}_{\Gamma_N}
   \\
    &\quad - \alpha \bigskp{\AAA_{\aaal}(h^{-1} \jump{(\bfu_h -\bu_D^*)\otimes
        \bfn})}{ \jump{\bfz_h \otimes \bfn}}_{\smash{\Gamma_h^{i}}\cup
      \Gamma_D}\,.
  \end{aligned}
\end{align}
This is the \textbf{primal formulation} of
our system. It can be equivalently formulated~as
\begin{align}
  \label{eq:op}
  \hskp{\bB_h\bu_h}{\bz_h}= \hskp{\bfb_h}{\bz_h} \quad \textup{ for all } \bz_h
  \in \Vhk\,,
\end{align}
where the nonlinear operator $\bB_h\colon \Vhk \to (\Vhk)^*$ and the linear functional
$\bb_h\colon \Vhk \to \setR$  for every $\bu_h, \bz_h \in \Vhk$ are defined via
\begin{align}
  \label{eq:bB}
 \hskp{\bB_h\bu_h}{\bz_h}&\vcentcolon= \bighskp{\AAA(\nablaDG \bfu_h + \Rhk\bfu_D^*)}{\nablaDG
  \bfz_h}
  \\
  &\quad + \alpha \bigskp{\AAA_{\aaal}  (h^{-1} \jump{(\bfu_h -\bu_D^*)\otimes
  \bfn})}{ \jump{\bfz_h \otimes \bfn}}_{\smash{\Gamma_h^{i}}\cup
    \Gamma_D}\,,\notag
  \\
  \hskp{\bfb_h}{\bz_h} &\vcentcolon=\hskp{\bfg}{\bfz_h}+\bighskp{\bfG}{\Ghk\bfz_h}+
    \skp{\bfa_N}{\bfz_h}_{\Gamma_N}\,.\label{eq:bb}
\end{align}

\section{Well-posedness, stability and weak convergence}
\label{ssec:primalapriori}

In this section, for $k\!\in\! \mathbb{N}$ and $\alpha\! >\! 0$, we establish the existence of a solution~of~\eqref{eq:DG}, \eqref{eq:primal} and~\eqref{eq:op}~(i.e., 
well-posedness), resp., their stability (i.e., an a priori~estimate), and the weak convergence of the discrete solutions to a solution of problem \eqref{eq:p-lap}. Our approach extends the
treatment in \cite{dkrt-ldg}. Although the existence~of~\mbox{discrete} solution resorts to standard tools, the rigorous argument is
involved due to the presence of the shift
$\aaal$ in the
stabilization term. Hence,~we~present~a~detailed treatment. We start showing several estimates needed for the existence of
discrete solutions and their stability.
\begin{lemma}\label{lem:coer}
  Let $\AAA$ satisfy Assumption~\ref{ass:1} for a balanced N-function
  ${\phi}$. 
  Then, for every $\bu_h \in \Vhk$, we have that
  \begin{align}
      &\bighskp{\bB_h\bu_h}{\bu_h-\PiDG\bu_D^*}\label{eq:coer}\\
      &\ge c\, \min\{1,\alpha\}\,M_{\phi,h}(\bu_h\hspace*{-0.1em}
      -\hspace*{-0.1em}\bu_D^*) \hspace*{-0.1em}+\hspace*{-0.1em} c\,\alpha\, m_{\phi_{\smash{\aaal}}} (\bu_h \hspace*{-0.1em}-\hspace*{-0.1em}\bu_D^*) \hspace*{-0.1em}-\hspace*{-0.1em} c_\alpha\, \rho_{\phi, \Omega}(\nabla
      \bu_D^*) \notag
      \\
      &\ge c\, \min\{1,\alpha\}\, M_{\phi,h}(\bu_h) - c_\alpha\, \rho_{\phi, \Omega}(\nabla
      \bu_D^*) - c\, \min\{1,\alpha\}\, \rho_{\phi, \Omega}\big( h^{-1} \bu_D^*\big)     \notag
\end{align}
with a constant $c>0$ depending only on the
characteristics of ${\AAA }$ and $\phi$, and the chunkiness
$\omega_0>0$, and a constant $c_\alpha>0$ additionally depending on $\max\{1,\alpha\}$.
\end{lemma}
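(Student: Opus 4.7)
The plan is to test the definition of $\bB_h$ with $\bz_h := \bu_h - \PiDG \bu_D^* \in \Vhk$ and to exploit the algebraic identities
\begin{align*}
\nablaDG(\bu_h - \PiDG \bu_D^*) &= \bL_h - \big(\nablaDG \PiDG \bu_D^* + \Rhk \bu_D^*\big),\\
\jump{(\bu_h - \PiDG \bu_D^*) \otimes \bn} &= \jump{(\bu_h - \bu_D^*) \otimes \bn} - \jump{(\PiDG \bu_D^* - \bu_D^*) \otimes \bn},
\end{align*}
with $\bL_h := \nablaDG \bu_h + \Rhk \bu_D^*$. This decomposes $\hskp{\bB_h \bu_h}{\bu_h - \PiDG \bu_D^*}$ into a coercive principal part $\hskp{\AAA(\bL_h)}{\bL_h} + \alpha \skp{\AAA_a(h^{-1}\jump{(\bu_h-\bu_D^*)\otimes\bn})}{\jump{(\bu_h-\bu_D^*)\otimes\bn}}_{\Gamma_I\cup\Gamma_D}$, with data-dependent shift $a := \abs{\avg{\Pia \bL_h}}$, together with two error terms that contain $\nablaDG \PiDG \bu_D^* + \Rhk \bu_D^*$ and $\PiDG \bu_D^* - \bu_D^*$, respectively.

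For the principal parts, \eqref{eq:hammerd} applied to $\AAA$, and, via \eqref{eq:equi2} together with the definition \eqref{eq:flux}, to its shifted counterpart $\AAA_a$, yields $\hskp{\AAA(\bL_h)}{\bL_h} \sim \rho_{\phi,\Omega}(\bL_h)$ and, after the $h$-scaling $\bfP = h^{-1}\jump{\cdots}$, the jump pairing is $\sim m_{\phi_a, h}(\bu_h - \bu_D^*)$. The two error terms are handled by the $\varepsilon$-Young inequality \eqref{ineq:young} combined with $\phi^*(\abs{\AAA(\cdot)}) \sim \phi(\abs{\cdot})$ (from \eqref{eq:phi*phi'} and \eqref{eq:hammere2}) and its shifted analogue. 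A small fraction is absorbed into the principal parts, while the remainders are reduced to $c_\alpha\,\rho_{\phi,\Omega}(\nabla \bu_D^*)$ by invoking the $L^\phi$-stability and approximation properties of $\PiDG$, a lifting bound of $\Rhk$ in terms of face modulars, scaled face trace inequalities, and the change-of-shift Lemma~\ref{lem:change2}, which replaces $\phi_a$ by $\phi$ up to further absorbable terms.

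Writing $\bL_h = \Ghk(\bu_h - \bu_D^*) + \nabla \bu_D^*$ and using the $\Rhk$-lifting bound once more to absorb the term $\Rhk(\bu_h - \bu_D^*)$ then produces the first inequality in \eqref{eq:coer}; the $\min\{1,\alpha\}$ prefactor arises naturally when $\alpha$ is distributed across the jump part (proportional to $\alpha$) and the volume part (of size $1$). For the second inequality, the nonnegative shifted modular $m_{\phi_a,h}(\bu_h-\bu_D^*)$ is simply dropped, and a triangle-type estimate for $M_{\phi,h}$ combined with the fact that $\jump{\bu_D^* \otimes \bn}$ is supported on $\Gamma_D$ --- so that $h\,\rho_{\phi,\Gamma_D}(h^{-1}\bu_D^*) \lesssim \rho_{\phi,\Omega}(h^{-1}\bu_D^*) + \rho_{\phi,\Omega}(\nabla \bu_D^*)$ by a scaled face trace inequality --- yields $M_{\phi,h}(\bu_h - \bu_D^*) \ge c\, M_{\phi,h}(\bu_h) - C\big[\rho_{\phi,\Omega}(\nabla\bu_D^*) + \rho_{\phi,\Omega}(h^{-1}\bu_D^*)\big]$.

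The main obstacle is the data-dependent shift $a = \abs{\avg{\Pia \bL_h}}$ entering the stabilization: every Young step, every lifting bound, and every stability estimate on the jump side must be executed in the shifted Orlicz setting and then reconciled with the unshifted modular via Lemma~\ref{lem:change2}. Keeping all constants dependent only on the characteristics of $\AAA$ and $\phi$ (and on $\omega_0$), and correctly tracking the $\min\{1,\alpha\}$ versus $\max\{1,\alpha\}$ bookkeeping when distributing $\alpha$ across absorbed and residual contributions, is the most delicate point of the argument, and is precisely where the proof genuinely departs from the $(p,\delta)$-analysis of \cite{dkrt-ldg}.
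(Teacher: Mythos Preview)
Your proposal is correct and follows essentially the same route as the paper: the same decomposition of $\nablaDG(\bu_h-\PiDG\bu_D^*)$ into $\bL_h$ minus a $\bu_D^*$-dependent remainder, the same identification of the coercive parts $I_1\sim\rho_{\phi,\Omega}(\bL_h)$ and $I_2\sim m_{\phi_a,h}(\bu_h-\bu_D^*)$, and the same handling of the error terms via $\varepsilon$-Young, the shift change Lemma~\ref{lem:change2}, the lifting bound \eqref{eq:Rgammaest}, and the $\PiDG$-stability/approximation estimates. The only cosmetic difference is that the paper splits your volume error term further into $I_3$ (containing $\nabla_h\PiDG\bu_D^*$) and $I_4$ (containing $\Rhk(\bu_D^*-\PiDG\bu_D^*)$), treating them separately, whereas you keep them grouped; and the paper makes the conversion $m_{\phi_a,h}(\bu_h-\bu_D^*)\to m_{\phi,h}(\bu_h-\bu_D^*)$ via shift change (picking up the extra trace term $h\,\rho_{\phi,\Gamma_I\cup\Gamma_D}(\Pia\bL_h)$, cf.~\eqref{eq:a2}) more explicitly than your sketch does.
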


\begin{proof}
  In view of \eqref{eq:Lh}, we write $\smash{\AAA_{\smash{\aaa}}}$ instead of
  $\smash{\AAA_{\smash{\aaal}}}$~to~shorten the notation. 
  Resorting to \eqref{eq:DGnablaR}, for every $\bu_h \in \Vhk$, we
  find that
  \begin{align}\label{eq:decomp}
    \hspace*{-2mm}\nablaDG\big( \bfu_h\!-\! \PiDG\bu_D^*\big)=  \big(\nablaDG \bfu_h \!+\! \Rhk \bfu_D^*\big)\! -\!
    \big(\Rhk \bfu_D^* \!-\!\Rhk \PiDG\bfu_D^*\big) \!-\!\nabla _h \PiDG\bu^*_D\;\,\text{ in }\Xhk\,.\hspace{-2mm}
  \end{align}
	Using \eqref{eq:decomp}, we immediately deduce that for every  $\bu_h \in \Vhk$, we have that
  \begin{align}
    &\hskp{\bB_h\bu_h}{\bu_h-\PiDG\bu_D^*}\label{eq:apri}
    \\
    &=\bighskp{\AAA(\nablaDG \bfu_h + \Rhk \bfu_D^*)}{\nablaDG \bfu_h +
      \Rhk \bfu_D^*}\notag 
    \\
    &\quad + \alpha \bigskp{\AAA_{\smash{\aaa}}
      (h^{-1} \jump{(\bfu_h -\bu_D^*)\otimes \bfn})}
      { \jump{(\bfu_h -\bu_D^*) \otimes \bfn}}_{\smash{\Gamma_h^{i}}\cup \Gamma_D}\notag
    \\
    &\quad - \bighskp{\AAA(\nablaDG \bfu_h + \Rhk \bfu_D^*)}{\nabla_h\PiDG \bfu_D^*
    } -\bighskp{\AAA(\nablaDG \bfu_h + \Rhk \bfu_D^*)}{\Rhk(\bu_D^*-\PiDG
      \bfu_D^*) }\notag
    \\
    &\quad -\alpha\, \bigskp{\AAA_{\aaa}(h^{-1} \jump{(\bfu_h -\bu_D^*)\otimes \bfn})}
      { \jump{(\PiDG \bfu_D^* -\bu_D^*) \otimes \bfn} }_{\smash{\Gamma_h^{i}}\cup \Gamma_D}
    \notag
    \\
    &=: I_1+\alpha\,I_2+I_3+I_4+ I_5 \,.\notag
  \end{align}
  Before we estimate the terms $I_i$, $i=3,\ldots,5$, we collect the
  information~we~obtain from the terms $I_1$ and $I_2$.  From
  Proposition~\ref{lem:hammer}, for every $\bu_h \in \Vhk$, it follows that
  \begin{align}\label{eq:a0}
    \begin{aligned}
      I_1 \sim \rho_{\phi,\Omega}\big({\nablaDG \bfu_h + \Rhk
        \bfu_D^*}\big)\,,\quad
      I_2 \sim m_{\phi_{\aaa},h } (\bfu_h-\bu_D^*) \,.
    \end{aligned}
  \end{align}
  Resorting, again, to \eqref{eq:DGnablaR}, for every $\bu_h \in \Vhk$, we observe that
  \begin{align}\label{eq:id}
  \nablaDG \bfu_h + \Rhk \bfu_D^* =  \nablaDG (\bfu_h - \bfu_D^*) +  
      \nabla \bu^*_D\quad\text{ in }L^\phi(\Omega)\,,
  \end{align}
  which together with the properties of $\phi$ implies for every $\bu_h \in \Vhk$
  that
  \begin{align}\label{eq:a1}
     \rho_{\phi,\Omega}\big (\nablaDG (\bfu_h -\bu_D^*)\big ) &\le c\, I_1 +  c\,
          \rho_{\phi,\Omega}(\nabla  \bfu_D^*)\,.
  \end{align}
  The shift change from Lemma \ref{lem:change2}, \eqref{eq:Lh},
  \eqref{eq:a0}, the trace inequality \eqref{eq:PiDGapproxmglobal1}, \eqref{eq:id}, the
  $L^\phi$-stability of $\PiDG$ in \eqref{eq:PiDGLpsistable}  with
  $k=0$, 
  and \eqref{eq:a1}, for every
  $\bu_h \in \Vhk$, yield
  \begin{align}
    \hspace*{-.5mm}  m_{\phi,h } (\bfu_h-\bu_D^*)
      &\le c\, m_{\phi_{\aaa},h } (\bfu_h-\bu_D^*) \!+\!c \,h\,
      \rho_{\phi,\smash{\Gamma_h^{i}}\cup\Gamma_D } \big  (\aaal \big )
        \notag
      \\[-1mm]
      &\le c\, I_2 +  c\, \rho_{\phi,\Omega } \big  (\Pia  (\nablaDG (\bfu_h - \bfu_D^*) +  
      \nabla  \bu^*_D ) \big ) \notag
      \\[-0.25mm]
      &\le c\, I_2 +  c\, \rho_{\phi,\Omega } \big(\nablaDG (\bfu_h - \bfu_D^*) +  
      \nabla \bu^*_D\big) \label{eq:a2}
      \\[-0.25mm]
      &\le c\, I_2 +  c\,     \rho_{\phi,\Omega } \big  (\nablaDG (\bfu_h - \bfu_D^*) \big ) +  c\,
      \rho_{\phi,\Omega }( \nabla  \bu^*_D) \notag
      \\[-0.25mm]
      &\le c\, I_2 +   c\, I_1 +  c\,
      \rho_{\phi,\Omega }( \nabla  \bu^*_D)\,. \notag
  \end{align}
  From \eqref{eq:a1}, \eqref{eq:a2} and~the equivalent expression for $M_{\phi,h}$ in Lemma~\ref{lem:equiW1psi},~it~follows~that
  \begin{align*}
    M_{\phi,h } (\bfu_h-\bu_D^*)
    &\le c\, I_1 + c\,I_2 + c\, \rho_{\phi,\Omega }( \nabla  \bu^*_D) \,,
  \end{align*}
  which implies that
  \begin{align}
    \begin{aligned}
      & \alpha\, m_{\phi_{\aaa},h } (\bfu_h-\bu_D^*)
      +\min\{1,\alpha\}\, M_{\phi,h } (\bfu_h-\bu_D^*)
      \\[-0.25mm]
      &\quad \le c\, I_1 +c\,\alpha\, I_2 + c\,\rho_{\phi,\Omega }( \nabla
      \bu^*_D)\,.
    \end{aligned}
        \label{eq:a3}
  \end{align}
  Now we can estimate the remaining terms. Using~\eqref{eq:hammere2}, the $\varepsilon$-Young~inequality~\eqref{ineq:young}, \eqref{eq:phi*phi'}, the $L^\phi$-gradient stability
  of~$\PiDG$ in \eqref{eq:PiDGLW1psistable}, and \eqref{eq:a1}, we have that 
  \begin{align}\label{eq:a4}
    \begin{aligned}
      \abs{I_3}&\le \vep \,\rho_{\phi^*,\Omega}\big(\phi'(\abs{\nablaDG \bfu_h +
        \Rhk \bfu_D^*})\big)  +c_\vep\,
      \rho_{\phi,\Omega}\big(\nabla_h\PiDG\bfu_D^*\big)
      \\[-0.25mm]
      &\le \vep \,\rho_{\phi,\Omega}\big({\nablaDG \bfu_h + \Rhk
        \bfu_D^*}\big)  +c_\vep \,\rho_{\phi,\Omega}\big(\nabla\bfu_D^*\big)
      \\[-0.25mm]
      &\le \vep \, c\, I_1 +c_\vep \,\rho_{\phi,\Omega}\big(\nabla\bfu_D^*\big)\,,
    \end{aligned}
  \end{align}
  and with the stability of $\Rhk$ in
  \eqref{eq:Rgammaest} and the approximation property~of~$\PiDG$~in~\eqref{eq:PiDGapproxmglobal}
  \begin{align}\label{eq:a5}
    \begin{aligned}
      \abs{I_4}&\le \vep\, \rho_{\phi^*,\Omega}\big(\phi'(\abs{\nablaDG
        \bfu_h + \Rhk \bfu_D^*})\big) +c_\vep\,
      \rho_{\phi,\Omega}\big(\Rhk(\bu_D^*-\PiDG \bfu_D^*)\big)
      \\[-0.25mm]
      &\le \vep\, \rho_{\phi,\Omega}\big({\nablaDG \bfu_h + \Rhk
        \bfu_D^*}\big) +c_\vep \,m_{\phi,h}\big(\bu_D^*-\PiDG \bfu_D^*\big)
      \\[-0.25mm]
      &\le \vep \,c\, I_1 +c_\vep \,\rho_{\phi,\Omega}(\nabla\bfu_D^*)\,.
    \end{aligned}
  \end{align}
  Using the definition of $\AAA_{\aaa}$ in \eqref{eq:flux}, the  $\varepsilon$-Young
  inequality \eqref{ineq:young}~with~$\phi_{\aaa}$, \eqref{eq:phi*phi'}, the
  shift change in Lemma \ref{lem:change2}, \eqref{eq:Lh}, the
  approximation property~of~$\PiDG$~in \eqref{eq:PiDGapproxmglobal},
  \hspace*{-0.1mm}and \hspace*{-0.1mm}proceeding \hspace*{-0.1mm}as \hspace*{-0.1mm}in \hspace*{-0.1mm}\eqref{eq:a2} \hspace*{-0.1mm}to \hspace*{-0.1mm}handle
  \hspace*{-0.1mm}$h\rho_{\phi,\smash{\Gamma_h^{i}}\cup\Gamma_D } \!(\avg{\abs{\Pia (\nablaDG \bfu_h
  \!+\!\Rhk\bfu_D^*)}})$,~\hspace*{-0.1mm}we~\hspace*{-0.1mm}get
  \begin{align}
    \abs{I_5}&\le \alpha\,h\, \bigabs{ \bigskp{\phi_{\aaa}'(\abs{h^{-1} \jump{(\bfu_h -\bu_D^*)\otimes
               \bfn}})}{h^{-1} \jump{(\bu_D^*-\PiDG \bu_D^* ) \otimes \bfn}}_{\smash{\Gamma_h^{i}}\cup
               \Gamma_D}}\notag
    \\[-0.25mm]
             &\le \vep\,h \, \alpha\,\rho_{\phi_{\aaa}^*,\smash{\Gamma_h^{i}}\cup
               \Gamma_D}\big(\phi_{\aaa}'(\abs{h^{-1} \jump{(\bfu_h -\bu_D^*)\otimes
               \bfn}})\big) \notag
    \\[-1mm]
    &\quad +c_\vep \, \alpha\,m_{\phi_{\aaa},h}\big (\bfu_D^*
               -\PiDG\bu_D^*\big) \label{eq:a6}
    \\[-0.75mm]
             &\le \vep\,c\,\alpha\,m_{\phi_{\aaa},h}(\bfu_h -\bu_D^*)+c_\vep \,c_\kappa\,
               \alpha\,m_{\phi,h}\big(\bfu_D^* -\PiDG\bu_D^*\big)\notag
    \\[-1mm]
    &\quad + \kappa\, c_\vep \,\alpha\,h\,
      \rho_{\phi,\smash{\Gamma_h^{i}}\cup\Gamma_D } \big (\aaal \big) \notag
      \\[-0.25mm]
      &\le \vep\,c\, \alpha\,I_2 
        + \kappa\, c_\vep \, c\,  \max\{1,\alpha \}\,I_1
        + c_\kappa\, c_\vep\, \max\{1,\alpha \}\,
      \rho_{\phi,\Omega }\big ( \nabla  \bu^*_D \big)\,. \notag
  \end{align}
  \enlargethispage{3mm}
  Choosing first $\vep>0$  and, then, $\kappa>0$
  sufficiently small, we conclude from
  \eqref{eq:apri}, \eqref{eq:a3}--\eqref{eq:a6} that the first
  inequality~in~\eqref{eq:coer} applies. Then, the second one follows from the
  first one and 
  \begin{align}
    \label{eq:a7}
    \begin{aligned}
      M_{\phi,h}(\bu_h -\bu_D^*)&\ge c\, M_{\phi,h}(\bu_h) -c\,
      M_{\phi,h}(\bu_D^*)
      \\[-0.25mm]
      &\ge c\, M_{\phi,h}(\bu_h) -c\,\rho_{\phi,\Omega}(\nabla
      \bu_D^*) - c\,\rho_{\phi,\Omega}\big(h^{-1} \bu_D^*\big)\,,
    \end{aligned}
  \end{align}
  where we used the definition of $M_{\phi,h}$ in
  \eqref{def:mh} and the trace inequality \eqref{eq:PiDGapproxmglobal2}.\vspace*{-2.5mm}
\end{proof}
\begin{lemma}\label{lem:coer1}
  Let $\AAA$ satisfy Assumption~\ref{ass:1} for a balanced N-function
  ${\phi}$.  Then, for every $\vep >0$, there exists~a~\mbox{constant}
  ${c_\vep>0}$ such that for every $\bu_h \in \Vhk$,~we~have~that
  \begin{align*}
      \abs{\hskp{\bB_h\bu_h}{\bu_h}} &\ge c\,
      M_{\phi,h}(\bu_h) - c\,\big (\rho_{\phi, \Omega}(\nabla
      \bu_D^*) + \rho_{\phi, \Omega}(
      h^{-1} \bu_D^*)\big )
\end{align*}
with a constant $c>0$ depending only on $\alpha>0$, the
characteristics of $\AAA$ and $\phi$,
and the chunkiness $\omega_0>0$.
\end{lemma}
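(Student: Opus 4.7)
The plan is to reduce the estimate to Lemma~\ref{lem:coer} via the decomposition
\begin{align*}
\hskp{\bB_h\bfu_h}{\bfu_h} = \hskp{\bB_h\bfu_h}{\bfu_h-\PiDG\bfu_D^*} + \hskp{\bB_h\bfu_h}{\PiDG\bfu_D^*}\,.
\end{align*}
Inspecting the proof of Lemma~\ref{lem:coer} yields $\hskp{\bB_h\bfu_h}{\bfu_h-\PiDG\bfu_D^*} \ge c(I_1+\alpha I_2) - c_\alpha\,\rho_{\phi,\Omega}(\nabla\bfu_D^*)$ after the absorption step, where $I_1\sim\rho_{\phi,\Omega}(\Lh)$ and $\alpha I_2\sim\alpha\,m_{\phi_\aaa,h}(\bfu_h-\bfu_D^*)$ are the leading positive terms in that proof. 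Hence it suffices to bound $|\hskp{\bB_h\bfu_h}{\PiDG\bfu_D^*}|$ by a small multiple of $I_1+\alpha I_2$ plus data terms of the form $\rho_{\phi,\Omega}(\nabla\bfu_D^*)+\rho_{\phi,\Omega}(h^{-1}\bfu_D^*)$: then the small multiples can be absorbed, and the resulting lower bound, combined with the passage from $I_1+\alpha I_2$ to $M_{\phi,h}(\bfu_h)$ carried out in \eqref{eq:a3} and \eqref{eq:a7}, gives $cM_{\phi,h}(\bfu_h)$ modulo the admissible data terms. The positivity of this lower bound justifies the absolute value on the left-hand side.

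To bound $|\hskp{\bB_h\bfu_h}{\PiDG\bfu_D^*}|$, I use the definition \eqref{eq:bB} and proceed in close analogy with the terms $I_3,I_4,I_5$ in the proof of Lemma~\ref{lem:coer}. The volume contribution $\hskp{\AAA(\Lh)}{\nablaDG\PiDG\bfu_D^*}$ is handled via Young's inequality \eqref{ineq:young}, the identification $|\AAA(\Lh)|\sim\phi'(|\Lh|)$ from \eqref{eq:hammere2}, the equivalence \eqref{eq:phi*phi'}, combined with the $L^\phi$-gradient stability of $\PiDG$ and $\Rhk$-stability applied separately to the two pieces of $\nablaDG\PiDG\bfu_D^* = \nabla_h\PiDG\bfu_D^* - \Rhk\PiDG\bfu_D^*$. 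The flux contribution is handled by $\vep$-Young with the shifted pair $(\phi_\aaa,\phi_\aaa^*)$, after which the resulting $m_{\phi_\aaa,h}(\PiDG\bfu_D^*)$ is converted back to $m_{\phi,h}(\PiDG\bfu_D^*)$ plus a shift-control term via Lemma~\ref{lem:change2}; the shift-control term is reabsorbed through $\Pia$-stability and the trace inequality, exactly as in \eqref{eq:a2}. Finally, $m_{\phi,h}(\PiDG\bfu_D^*)$ is estimated by decomposing $\PiDG\bfu_D^* = (\PiDG\bfu_D^*-\bfu_D^*)+\bfu_D^*$ and exploiting the approximation property of $\PiDG$ on the difference together with the vanishing interior jumps of $\bfu_D^*\in W^{1,\phi}(\Omega)$, so that $m_{\phi,h}(\bfu_D^*) = h\,\rho_{\phi,\Gamma_D}(h^{-1}\bfu_D^*)$ is dominated by $\rho_{\phi,\Omega}(\nabla\bfu_D^*)+\rho_{\phi,\Omega}(h^{-1}\bfu_D^*)$ via the trace inequality.

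Combining the two estimates and choosing $\vep>0$ small enough to absorb all $\vep(I_1+\alpha I_2)$-contributions into the lower bound of $\hskp{\bB_h\bfu_h}{\bfu_h-\PiDG\bfu_D^*}$ delivers the claim. The main delicacy is the bookkeeping in the flux contribution: the shift $\aaa$ depends on $\bfu_h$, so Young's inequality produces shifted modulars that must be converted back to $\phi$-modulars through Lemma~\ref{lem:change2} in such a way that the resulting shift-control error remains reabsorbable into $I_1$; this is precisely the technical difficulty already encountered in Lemma~\ref{lem:coer} and is handled by the same chain of arguments.
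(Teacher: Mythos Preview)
Your approach is correct and close in spirit to the paper's, but the decomposition is organized differently. You split the test function as $\bfu_h=(\bfu_h-\PiDG\bfu_D^*)+\PiDG\bfu_D^*$ and recycle the proof of Lemma~\ref{lem:coer} for the first piece, then bound $\langle\bB_h\bfu_h,\PiDG\bfu_D^*\rangle$ separately; this forces you through the intermediate object $\PiDG\bfu_D^*$, so in the jump term you must convert $m_{\phi_\aaa,h}(\PiDG\bfu_D^*)$ back to $m_{\phi,h}(\PiDG\bfu_D^*)$ via Lemma~\ref{lem:change2} and then split $\PiDG\bfu_D^*=(\PiDG\bfu_D^*-\bfu_D^*)+\bfu_D^*$. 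The paper instead decomposes at the level of the arguments: it writes $\nablaDG\bfu_h=(\nablaDG\bfu_h+\Rhk\bfu_D^*)-\Rhk\bfu_D^*$ and $\jump{\bfu_h\otimes\bfn}=\jump{(\bfu_h-\bfu_D^*)\otimes\bfn}+\jump{\bfu_D^*\otimes\bfn}$, obtaining four terms $I_1,\ldots,I_4$ directly and avoiding $\PiDG\bfu_D^*$ altogether. Both routes use the same core tools (Young, shift change, trace inequality, $\Rhk$-stability, the $I_1,I_2$ estimates from Lemma~\ref{lem:coer}), so the difference is organizational rather than substantive; the paper's version is slightly leaner because it skips the $\PiDG$-approximation step.
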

\begin{proof}
  In view of \eqref{eq:Lh}, we write $\smash{\AAA_{\smash{\aaa}}}$ instead of
  $\smash{\AAA_{\smash{\aaal}}}$. Then, for every $\bu_h \in \Vhk$, we have  that
  \begin{align}
    \hskp{\bB_h\bu_h}{\bu_h}
    &=\bighskp{\AAA(\nablaDG \bfu_h + \Rhk \bfu_D^*)}{\nablaDG \bfu_h +
      \Rhk \bfu_D^*}\notag
    \\[-0.5mm]
    &\quad + \alpha \bigskp{\AAA_{\aaa}
      (h^{-1} \jump{(\bfu_h -\bu_D^*)\otimes \bfn})}
      { \jump{(\bfu_h -\bu_D^*) \otimes \bfn}}_{\smash{\Gamma_h^{i}}\cup \Gamma_D}\notag
    \\[-0.5mm]
    &\quad -\bighskp{\AAA(\nablaDG \bfu_h + \Rhk \bfu_D^*)}{\Rhk \bu_D^* }\label{eq:apri1}
    \\[-0.5mm]
    &\quad +\alpha\, \bigskp{\AAA_{\aaa}(h^{-1} \jump{(\bfu_h -\bu_D^*)\otimes \bfn})}
      { \jump{\bu_D^* \otimes \bfn} }_{\smash{\Gamma_h^{i}}\cup \Gamma_D}\notag
    \\[-0.5mm]
    &=: I_1+\alpha\,I_2+I_3+I_4 \,.\notag
  \end{align}
  Proceeding as in the proof of Lemma \ref{lem:coer}, for every
  $\bu_h \in \Vhk$, we find that (cf.~\eqref{eq:a3})
  \begin{align*}
    I_1 + \alpha\,I_2 &\ge c\, \smash{m_{\phi_{\aaa},h } (\bfu_h-\bu_D^*)} +c\, M_{\phi,h } (\bfu_h-\bu_D^*)
                        -  c\, \rho_{\phi,\Omega }( \nabla  \bu^*_D)
  \end{align*}
  with \hspace{-0.1mm}$c\!>\!0$ \hspace{-0.1mm}depending \hspace{-0.1mm}on \hspace{-0.1mm}$\alpha\!>\!0$. \hspace{-1mm}Using \hspace{-0.1mm}the \hspace{-0.1mm}$\varepsilon$-Young
  \hspace{-0.1mm}inequality \hspace{-0.1mm}\eqref{ineq:young}, \hspace{-0.1mm}\eqref{eq:phi*phi'}, \hspace{-0.1mm}the \hspace{-0.1mm}stability~\hspace{-0.1mm}of $\Rhk$ in
  \eqref{eq:Rgammaest}, \eqref{eq:id}, the trace
  inequality~\eqref{eq:PiDGapproxmglobal2},~\eqref{eq:equi},~for~all~${\bu_h\!
    \in\! \Vhk}$,~we~get~(cf.~\eqref{eq:a5})
  \begin{align*}
      \abs{I_3}&\le \vep \,\rho_{\phi,\Omega}\big({\nablaDG \bfu_h + \Rhk
        \bfu_D^*}\big) +c_\vep \,m_{\phi,h}(\bu_D^*)
      \\
      &\le \vep \, c\,\rho_{\phi,\Omega}\big(\nablaDG (\bfu_h
      -\bu_D^*)\big ) +c_\vep \,\rho_{\phi,\Omega}(\nabla\bfu_D^*)
      +c_\vep \,\rho_{\phi,\Omega}\big(h^{-1}\bfu_D^*\big)
      \\
      &\le \vep \, c\,M_{\phi,h}(\bfu_h
      -\bu_D^* ) +c_\vep \,\rho_{\phi,\Omega}(\nabla\bfu_D^*)
      +c_\vep \,\rho_{\phi,\Omega}\big(h^{-1}\,\bfu_D^*\big)\,.
  \end{align*}
  Proceeding as in the estimate \eqref{eq:a6} and, in addition, using
  the equivalent expression for  $M_{\phi,h}$ in
  \eqref{eq:equi}, for~every~${\bu_h \in \Vhk}$, we obtain 
  \begin{align*}
    \abs{I_4}&\le \alpha\,h\, \bigabs{ \bigskp{\phi_{\aaa}'(\abs{h^{-1} \jump{(\bfu_h -\bu_D^*)\otimes
               \bfn}})}{h^{-1} \jump{\bu_D^* \otimes \bfn}}_{\smash{\Gamma_h^{i}}\cup
               \Gamma_D}}\notag
    \\[-0.5mm]
             &\le \vep\,m_{\phi_{\aaa},h}(\bfu_h -\bu_D^*)+c_\kappa\,c_\vep \,m_{\phi,h}(\bfu_D^* )\notag
       \\[-0.5mm]&\quad+ \kappa\, c_\vep \,h\,
      \rho_{\phi,\smash{\Gamma_h^{i}}\cup\Gamma_D } \big (\aaal \big ) \notag
      \\[-0.5mm]
      &\le \vep\,m_{\phi_{\aaa},h}(\bfu_h -\bu_D^*) +c_\kappa\,c_\vep \,\rho_{\phi,\Omega}(\nabla\bfu_D^*)
        +c_\kappa \,c_\vep \,\rho_{\phi,\Omega}\big(h^{-1}\,\bfu_D^*\big)
    \\[-0.5mm]
    &\quad +\kappa\, c_\vep \,
      \rho_{\phi,\Omega } \big(\nablaDG (\bfu_h - \bfu_D^*)\big )\notag
    \\[-0.5mm]
       &\le \vep\,m_{\phi_{\aaa},h}(\bfu_h -\bu_D^*) +c_\kappa\,c_\vep \,\rho_{\phi,\Omega}(\nabla\bfu_D^*)
         +c_\kappa\,c_\vep \,\rho_{\phi,\Omega}\big(h^{-1}\,\bfu_D^*\big)
    \\[-0.5mm]
    &\quad + \kappa\,c_\vep \,
      M_{\phi,h } (\bfu_h - \bfu_D^*)  \notag
 \end{align*}
 with constants $c_\kappa,c_\vep>0$ depending on $\alpha>0$. Choosing
 first $\vep>0$ and than $\kappa>0$ sufficiently~small, the last three estimates and \eqref{eq:a7} prove the assertion.
\end{proof}

\begin{lemma}\label{lem:funct}
  Let $\AAA$ satisfy Assumption~\ref{ass:1} for a balanced N-function
  ${\phi}$.  Then, for every $\vep \hspace*{-0.15em}>\hspace*{-0.15em}0$, there exists~a~constant
  $c_\vep\hspace*{-0.15em}>\hspace*{-0.15em}0$, depending on the
  characteristics~of~$\AAA$~and~$\phi$ and the chunkiness $\omega_0$, such that for every $\bu_h \in \Vhk$,~we~have~that
  \begin{align*}
    \bigabs{\bighskp{\bb_h}{\bu_h-\PiDG\bu_D^*}}
    &\le\vep\,  M_{\phi,h}(\bu_h-   \bu_D^*) +
      c_\vep\,\rho_{\phi^*,\Omega}(\bfg) +  c_\vep\,\rho_{\phi^*,\Omega}(\bfG)
    \\
    &\quad       +c_\vep\,\rho_{\phi^*,\Gamma_N }(\ba_N)+
      c_\vep\,\rho_{\phi, \Omega}(\nabla \bu_D^*)\,. 
\end{align*}
\end{lemma}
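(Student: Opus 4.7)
The plan is to decompose $\hskp{\bb_h}{\bu_h-\PiDG\bu_D^*}$ into its three natural pieces coming from the definition \eqref{eq:bb} of $\bb_h$, apply Young's inequality \eqref{ineq:young} to each, and reduce the resulting primal modulars to quantities on the right-hand side of the claim. Setting $\bfw_h := \bu_h-\PiDG\bu_D^*\in \Vhk$, I would write
\begin{align*}
\hskp{\bb_h}{\bfw_h} = \hskp{\ff}{\bfw_h} + \hskp{\bF}{\Ghk\bfw_h} + \skp{\ba_N}{\bfw_h}_{\Gamma_N} =: J_1+J_2+J_3\,.
\end{align*}

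For $J_1$ and $J_2$, Young's inequality \eqref{ineq:young} generates the dual modulars $c_\vep\rho_{\phi^*,\Omega}(\ff)$ and $c_\vep\rho_{\phi^*,\Omega}(\bF)$, together with primal modulars $\vep\,\rho_{\phi,\Omega}(\bfw_h)$ and $\vep\,\rho_{\phi,\Omega}(\Ghk\bfw_h)$, respectively. I would bound the first of these by $c\,M_{\phi,h}(\bfw_h)$ via a DG Poincar\'e--Friedrichs type inequality, which is available since $\abs{\Gamma_D}>0$ and the jumps across $\Gamma_D$ are included in $m_{\phi,h}$; the second follows directly from the definition \eqref{eq:DGnablaR} of $\Ghk$ combined with the stability \eqref{eq:Rgammaest} of the lifting $\Rhk$, yielding $\rho_{\phi,\Omega}(\Ghk\bfw_h)\le c\,M_{\phi,h}(\bfw_h)$.

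For the Neumann term $J_3$, I would apply Young's inequality on $\Gamma_N$, producing $c_\vep\rho_{\phi^*,\Gamma_N}(\ba_N)$ together with a boundary modular $\vep\,\rho_{\phi,\Gamma_N}(\bfw_h)$, which I would control via a DG trace inequality of the form $\rho_{\phi,\Gamma_N}(\bz_h)\le c\,M_{\phi,h}(\bz_h)$, valid for elements of $\WDGphi$ extended by zero across $\Gamma_D$. This is the step I expect to be the main delicacy, because a naively scaled trace inequality would introduce an unwanted negative power of $h$, and an incautious split would leave a spurious $\rho_{\phi,\Omega}(h^{-1}\bu_D^*)$ contribution that does not appear on the right-hand side of the claim.

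It then remains to translate the bounds from $\bfw_h$ to $\bu_h-\bu_D^*$. Writing $\bfw_h = (\bu_h-\bu_D^*) + (\bu_D^*-\PiDG\bu_D^*)$ and exploiting the $\Delta_2$--condition of $\phi$, one has $M_{\phi,h}(\bfw_h)\le c\,M_{\phi,h}(\bu_h-\bu_D^*)+c\,M_{\phi,h}(\bu_D^*-\PiDG\bu_D^*)$, and the last modular is bounded by $c\,\rho_{\phi,\Omega}(\nabla\bu_D^*)$ by the $L^\phi$--gradient stability of $\PiDG$ for the volume part of $M_{\phi,h}$ and the approximation property \eqref{eq:PiDGapproxmglobal} of $\PiDG$ for the jump part $m_{\phi,h}$. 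Collecting all estimates and rescaling $\vep$ once absorbs the various constants and delivers the asserted bound.
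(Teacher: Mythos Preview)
Your proposal is correct and follows essentially the same route as the paper. The only organizational difference is that the paper splits $\bu_h-\PiDG\bu_D^*=(\bu_h-\bu_D^*)+(\bu_D^*-\PiDG\bu_D^*)$ \emph{before} applying Young's inequality (yielding six terms $J_1,\dots,J_6$), whereas you apply Young's inequality first to $\bfw_h$ and perform the split only at the end; both variants rely on the same ingredients---the DG Poincar\'e inequality \eqref{eq:poincare}, the equivalence \eqref{eq:equi}, the trace inequality \eqref{eq:trace}, and the approximation/stability properties \eqref{eq:PiDGapprox2}, \eqref{eq:PiDGapproxmglobal} of $\PiDG$---and your worry about the Neumann term is precisely resolved by \eqref{eq:trace}, which is $h$-independent.
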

\begin{proof}\let\qed\relax
  Using the identities \eqref{eq:decomp} and \eqref{eq:id}, for every
  $\bu_h \in \Vhk$, we observe that
  \begin{align}
    \label{eq:c1}
    \begin{aligned}
      \bighskp{\bfb_h}{\bu_h-\PiDG \bu_D^*} &=\hskp{\bfg}{\bu_h-\bu_D^*}
      + \bighskp{\bfg}{\bu_D^*-\PiDG \bu_D^*}
      \\
      &\quad
      +\bighskp{\bfG}{\Ghk(\bfu_h-\bu_D^*)}+\bighskp{\bfG}{\Rhk(\bfu_D^*-\PiDG\bu_D^*)}
      \\
      &\quad+\skp{\bfa_N}{\bu_h-\bu_D^*}_{\Gamma_N}+
      \bigskp{\bfa_N}{\bu_D^*-\PiDG\bu_D^*}_{\Gamma_N}
      \\
      &=:J_1+ \ldots +J_6\,.
    \end{aligned}
  \end{align}
  Resorting to the $\varepsilon$-Young inequality \eqref{ineq:young}, \Poincare's inequality
  \eqref{eq:poincare} and the approximation property of $\PiDG$ in
  \eqref{eq:PiDGapprox1a} in doing so,~for~every~${\bu_h \in \Vhk}$,
  we find that
  \begin{align*}
    \abs{J_1 +J_2} &\leq \epsilon\, \rho_{\phi,\Omega}(\bfu_h - \bfu_D^*) +
                c_{\epsilon} \,\rho_{\phi^*,\Omega}(\bfg) + c\,
                \rho_{\phi,\Omega}\big(\bfu_D^* - \PiDG \bfu_D^*\big)
    \\
              &\leq \epsilon \,c\,M_{\phi,h}(\bfu_h - \bfu_D^*) + c_{\epsilon}\,
                \rho_{\phi^*,\Omega}(\bfg) + c\, \rho_{\phi,\Omega}(h \, \nabla
                \bfu_D^*)
    \\
              &\leq \epsilon \,c\,M_{\phi,h}(\bfu_h - \bfu_D^*) + c_{\epsilon}\,
                \rho_{\phi^*,\Omega}(\bfg) + c\, \rho_{\phi,\Omega}( \nabla
                \bfu_D^*) \,. 
  \end{align*}
  Resorting, again, to the $\varepsilon$-Young inequality \eqref{ineq:young}, using the equivalent
  expression for $M_{\phi,h}$ in \eqref{eq:equi}, the stability of
  $\Rhk$ in \eqref{eq:Rgammaest} and the approximation property~of~$\PiDG$ in~\eqref{eq:PiDGapproxmglobal}, for every
  $\bu_h \in \Vhk$, we deduce that
  \begin{align*}
    \abs{J_3 +J_4} &\leq \epsilon\, \rho_{\phi,\Omega}\big(\Ghk(\bfu_h - \bfu_D^*)\big) +
                c_{\epsilon} \,\rho_{\phi^*,\Omega}(\bfG) + c\,
                \rho_{\phi,\Omega}\big(\Rhk(\bfu_D^* - \PiDG \bfu_D^*)\big)
    \\
              &\leq \epsilon \,c\,M_{\phi,h}(\bfu_h - \bfu_D^*) + c_{\epsilon}\,
                \rho_{\phi^*,\Omega}(\bfG) + c\, m_{\phi,h}\big(
                \bu_D^* -\PiDG  \bfu_D^*\big) 
    \\
              &\leq \epsilon \,c\,M_{\phi,h}(\bfu_h - \bfu_D^*) + c_{\epsilon}\,
                \rho_{\phi^*,\Omega}(\bfG) + c\, \rho_{\phi,\Omega}( \nabla
                \bfu_D^*) \,. 
  \end{align*}
  Finally, we estimate, using the $\varepsilon$-Young inequality \eqref{ineq:young}, the trace inequality~\eqref{eq:trace},
the $L^\phi$-gradient stability of $\PiDG$~in~\eqref{eq:PiDGapprox2} and the
  approximation property~of~$\PiDG$~in \eqref{eq:PiDGapproxmglobal}
  to conclude for every $\bu_h \in \Vhk$ that
  \begin{align*}
    \abs{J_5+J_6} &\leq c_\varepsilon\, \rho_{\phi^*, \Gamma_N}(\ba_N) +
                \varepsilon \,\rho_{\phi, \Gamma_N}(\bfu_h-\bu_D^*) +c\,
                 \rho_{\phi, \Gamma_N}\big(\bfu_D^*-   \PiDG \bu_D^*\big)
    \\
              &\leq c_\varepsilon\, \rho_{\phi^*, \Gamma_N}(\ba_N) +
                \varepsilon\,c\, M_{\phi, h}(\bfu_h-\bu_D^*) + c\,
                M_{\phi, h}\big(\bfu_D^*-\PiDG \bu_D^*\big)
    \\
              &\leq c_\varepsilon\, \rho_{\phi^*, \Gamma_N}(\ba_N) +
                \varepsilon\,c\, M_{\phi,h}(\bfu_h-\bu_D^*) + c\,
                \rho_{\phi, \Omega}(\nabla \bfu_D^*)\,. \tag*{$\square$}
  \end{align*}
\end{proof}

\begin{lemma}\label{lem:funct1}
  Let $\AAA$ satisfy Assumption~\ref{ass:1} for a balanced N-function
  ${\phi}$.  Then, for every $\vep >0$, there exists~a~constant
  $c_\vep>0$ such that for every $\bu_h \in \Vhk$,~we~have~that
  \begin{align*}
    \abs{\hskp{\bb_h}{\bu_h}}
    \le\vep\,  M_{\phi,h}(\bu_h) +
      c_\vep\,\rho_{\phi^*,\Omega}(\bfg) +  c_\vep\,\rho_{\phi^*,\Omega}(\bfG)
      +c_\vep\,\rho_{\phi^*,\Gamma_N }(\ba_N)      \,. 
\end{align*}
\end{lemma}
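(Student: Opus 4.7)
The plan is to estimate each of the three terms appearing in the definition of $\hskp{\bb_h}{\bu_h}$ (cf.~\eqref{eq:bb}) separately, using Young's inequality and absorbing the resulting norms of $\bu_h$ into $M_{\phi,h}(\bu_h)$. This is essentially a simplified version of the proof of Lemma~\ref{lem:funct}: there we tested against $\bu_h-\PiDG\bu_D^*$ and had to deal with the approximation error $\bu_D^*-\PiDG\bu_D^*$ and the identity \eqref{eq:decomp}; here, testing against $\bu_h$ directly, the estimates are cleaner but we rely on the same three ingredients (Poincaré, stability of $\Ghk$, and the trace inequality).

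First, I would split
\begin{align*}
  \hskp{\bb_h}{\bu_h}=\hskp{\ff}{\bu_h}+\bighskp{\bF}{\Ghk\bu_h}+\skp{\ba_N}{\bu_h}_{\Gamma_N}=:J_1+J_2+J_3\,.
\end{align*}
For $J_1$, Young's inequality \eqref{ineq:young} gives $|J_1|\le \varepsilon\,\rho_{\phi,\Omega}(\bu_h)+c_\varepsilon\,\rho_{\phi^*,\Omega}(\ff)$, and the Poincaré-type inequality \eqref{eq:poincare} on $\WDGphi$ allows us to dominate $\rho_{\phi,\Omega}(\bu_h)$ by $c\,M_{\phi,h}(\bu_h)$. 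For $J_2$, Young's inequality yields $|J_2|\le \varepsilon\,\rho_{\phi,\Omega}(\Ghk\bu_h)+c_\varepsilon\,\rho_{\phi^*,\Omega}(\bF)$, and the equivalent expression for $M_{\phi,h}$ in \eqref{eq:equi} (or the direct $L^\phi$-stability of $\Ghk$ already used in the proof of Lemma~\ref{lem:funct}) bounds $\rho_{\phi,\Omega}(\Ghk\bu_h)$ by $c\,M_{\phi,h}(\bu_h)$. For $J_3$, Young's inequality followed by the trace inequality \eqref{eq:trace} gives
\begin{align*}
  |J_3|\le c_\varepsilon\,\rho_{\phi^*,\Gamma_N}(\ba_N)+\varepsilon\,\rho_{\phi,\Gamma_N}(\bu_h)\le c_\varepsilon\,\rho_{\phi^*,\Gamma_N}(\ba_N)+\varepsilon\,c\,M_{\phi,h}(\bu_h)\,.
\end{align*}

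Summing these three estimates and rechoosing $\varepsilon>0$ (absorbing constants into it) yields the claim. There is no serious obstacle: the only point requiring mild care is that we no longer have the subtraction $\bu_h-\bu_D^*$ appearing in Lemma~\ref{lem:funct}, so we must apply Poincaré's inequality and the trace inequality to $\bu_h$ itself on $\WDGphi$; both inequalities are available in this $\DG$-setting, and since we do not need boundary-data terms on the right-hand side, the extension-error terms involving $\bu_D^*-\PiDG\bu_D^*$ that drove the proof of Lemma~\ref{lem:funct} simply disappear.
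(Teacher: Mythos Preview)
Your proposal is correct and takes essentially the same approach as the paper, which simply states that the assertion follows using the same tools as in the proof of Lemma~\ref{lem:funct}. Your explicit write-up---splitting into three terms and applying Young's inequality together with Poincar\'e \eqref{eq:poincare}, the equivalence \eqref{eq:equi}, and the trace inequality \eqref{eq:trace}, respectively---is exactly the intended simplification once the $\bu_D^*-\PiDG\bu_D^*$ approximation terms drop out.
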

\begin{proof}
The assertion follows, using the same tools as in the proof of Lemma
	\ref{lem:funct}. 
\end{proof}

Now we have everything at our disposal to prove the existence of
discrete solutions and their stability.
\begin{proposition}[Well-posedness]\label{prop:exist}
  Let $\AAA$ satisfy Assumption~\ref{ass:1} for a balanced N-function
  ${\phi}$. Then, for given data
  $\bu_D\hspace*{-0.1em}\in\hspace*{-0.1em}\trace W^{1,\phi}(\Omega)$,
  ${\bfg\hspace*{-0.1em}\in\hspace*{-0.1em}
    \smash{L^{\phi^*}(\Omega)}}$,~${\bfG\hspace*{-0.1em}\in\hspace*{-0.1em}
    \smash{L^{\phi^*}(\Omega)}}$~and
  $\ba_N\!\in\! \smash{L^{\phi^*} \!(\Gamma_N)}$ as well as given
  $k \!\in\! \setN$ and $\alpha \!> \!0$, $h >0$, there exist $\bu_h\! \in\! \Vhk$~solving~\eqref{eq:primal},~and   ${(\bL_h,\bA_h)^\top\in   \Xhk \times
  \Xhk}$ such that $(\bu_h,\bL_h,\bA_h)^\top $ solves~\eqref{eq:DG}.
\end{proposition}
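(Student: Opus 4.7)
The plan is to view the primal formulation \eqref{eq:primal} as the finite-dimensional nonlinear equation $\bB_h\bu_h=\bfb_h$ in $(\Vhk)^*$ (cf.~\eqref{eq:op}) and to invoke the following standard corollary of Brouwer's fixed point theorem: if $\Vhk$ is finite-dimensional and $\bG\colon\Vhk\to(\Vhk)^*$ is continuous with $\langle\bG(\bv_h+\PiDG\bu_D^*),\bv_h\rangle>0$ for all $\bv_h\in\Vhk$ on the sphere $\norm{\bv_h}_{M_{\phi,h}}=R$ for some $R>0$, then there exists $\bu_h\in\Vhk$ with $\norm{\bu_h-\PiDG\bu_D^*}_{M_{\phi,h}}\le R$ solving $\bG(\bu_h)=\bfzero$. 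We apply this to $\bG:=\bB_h(\cdot)-\bfb_h$, with the shift by $\PiDG\bu_D^*$ accounting for the inhomogeneous Dirichlet data.

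The required coercivity on large spheres is a direct consequence of Lemmas~\ref{lem:coer} and~\ref{lem:funct}. Combining the lower bound from Lemma~\ref{lem:coer},
\begin{align*}
\langle\bB_h\bu_h,\bu_h-\PiDG\bu_D^*\rangle \ge c\,\min\{1,\alpha\}\,M_{\phi,h}(\bu_h-\bu_D^*)-c_\alpha\,\rho_{\phi,\Omega}(\nabla\bu_D^*)\,,
\end{align*}
with the upper bound from Lemma~\ref{lem:funct}, and choosing the free parameter $\vep>0$ small enough to absorb the term $\vep\,M_{\phi,h}(\bu_h-\bu_D^*)$ into the coercive one, one arrives at a bound of the form
\begin{align*}
\langle\bG(\bu_h),\bu_h-\PiDG\bu_D^*\rangle \ge c\,M_{\phi,h}(\bu_h-\bu_D^*) - C_{\textup{data}}\,,
\end{align*}
where $C_{\textup{data}}>0$ depends only on the modulars of $\bff$, $\bF$, $\ba_N$ and $\nabla\bu_D^*$. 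Since $M_{\phi,h}$ is a coercive modular on $\Vhk$, the right-hand side is strictly positive once $\norm{\bu_h-\PiDG\bu_D^*}_{M_{\phi,h}}=R$ is chosen sufficiently large.

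Continuity of $\bB_h\colon\Vhk\to(\Vhk)^*$ follows from continuity of $\AAA$ (Assumption~\ref{ass:1}) together with continuity of the shifted map $(a,\bP)\mapsto\AAA_a(\bP)$, which is a direct consequence of \eqref{eq:flux} and \eqref{eq:phi_shifted} combined with the regularity of $\phi$, with the convention $\AAA_a(\bfzero):=\bfzero$. The corollary of Brouwer's theorem thus yields a solution $\bu_h\in\Vhk$ of \eqref{eq:primal}. Setting
\begin{align*}
\bL_h:=\nablaDG\bu_h+\Rhk\bu_D^*\in\Xhk\,,\qquad \bA_h:=\PiDG\AAA(\bL_h)\in\Xhk\,,
\end{align*}
in accordance with \eqref{eq:Lh}, \eqref{eq:Ah}, one recovers the first two identities in \eqref{eq:DG} by the defining property of $\PiDG$, while substituting these expressions into \eqref{eq:DG}$_3$ gives back \eqref{eq:primal}. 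Hence $(\bu_h,\bL_h,\bA_h)^\top$ solves \eqref{eq:DG}.

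The Brouwer step itself is entirely routine once coercivity is in hand; the genuine analytic work is the control of the solution-dependent shift $|\langle\Pia(\nablaDG\bu_h+\Rhk\bu_D^*)\rangle|$ appearing inside $\AAA_a$ in the stabilization term, which is already packaged in Lemmas~\ref{lem:coer} and~\ref{lem:funct} via repeated applications of the shift-change estimates of Lemma~\ref{lem:change2} and of the stability/approximation properties of $\PiDG$ and $\Rhk$. This coupling between the shift and the unknown is the feature that distinguishes the present scheme from \cite{dkrt-ldg} and is the only non-standard aspect of the existence argument.
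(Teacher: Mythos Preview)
Your proof is correct and follows essentially the same Brouwer-based coercivity argument as the paper. The only difference is that you test with $\bu_h-\PiDG\bu_D^*$ and invoke Lemmas~\ref{lem:coer} and~\ref{lem:funct}, whereas the paper tests with $\bu_h$ directly and uses the unshifted variants Lemmas~\ref{lem:coer1} and~\ref{lem:funct1}; both routes yield the required coercivity $\langle\bB_h\bu_h-\bfb_h,\cdot\rangle\to\infty$ along large spheres, and the paper's choice avoids the (harmless) mismatch between $M_{\phi,h}(\bu_h-\bu_D^*)$ and the sphere variable $\bu_h-\PiDG\bu_D^*$ that you implicitly absorb into the data constant.
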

\begin{proof}
  Lemma \ref{lem:coer1} and Lemma \ref{lem:funct1} with $\vep>0$ small
  enough yield~for~every~${\bu_h\in \Vhk}$
  \begin{align}
      \hskp{\bB_h\bu_h}{\bu_h}-\hskp{\bb_h}{\bu_h} &\ge c\,
      M_{\phi,h}(\bu_h) - c\,\rho_{\phi, \Omega}(\nabla \bu_D^*) -
      c\,\rho_{\phi, \Omega}(\bu_D^*)-c\, \rho_{\phi, \Omega}\big( h^{-1} \bu_D^*\big)\notag
      \\
      &\quad 
      -
      c_\vep\,\rho_{\phi^*,\Omega}(\bfg) -
      c_\vep\,\rho_{\phi^*,\Omega}(\bfG)
      -c_\vep\,\rho_{\phi^*,\Gamma_N }(\ba_N)\,. \label{eq:d1}
  \end{align}
  We equip $\Vhk$ with the norm $\smash{\norm{\cdot}_{\smash{M_{\phi,h}}}}\!$. Using
  that $\smash{\norm{\bu_h}_{\smash{M_{\phi,h}}}}\!\!\le\! \smash{\smash{M_{\phi,h}}}(\bu_h)$ if
  $\smash{\norm{\bu_h}_{\smash{M_{\phi,h}}}\!\!\ge\! 1}$, we find that the right-hand side in
  \eqref{eq:d1} converges to infinity for $\smash{\norm{\bu_h}_{M_{\phi,h}}}
  \to \infty$. Thus, Brouwer's fixed point theorem yields the existence
  of~$\bu_h\in \Vhk$~solving~\eqref{eq:primal}. In turn, we also conclude the existence of
  $\bL_h\in \Xhk$ and $\bA_h\in \Xhk$ from~\eqref{eq:Lh}~and~\eqref{eq:Ah},
  respectively, which shows the solvability of \eqref{eq:DG}.
\end{proof}
\begin{proposition}[Stability]\label{prop:stab}
    Let $\AAA$ satisfy Assumption~\ref{ass:1} for a balanced
    {N-func\-tion}
  ${\phi}$. Moreover, let $\bu_h \in \Vhk $ be a
  solution of \eqref{eq:primal} for $\alpha >0$, $h >0$,  and $k \in \setN$.~Then,~it~holds
     \begin{align}\label{eq:apri2}
    \begin{aligned}
      &M_{\phi,h}(\bu_h -\bu_D^*) +  m_{\phi_{\aaal},h} (\bu_h -\bu_D^*)
      \\
      &\quad\le  c\, \rho_{\phi, \Omega}(\nabla
      \bu_D^*)  + c\, \rho_{\phi^*,\Omega}(\bfg)
      + c\, \rho_{\phi^*,\Omega}(\bfG) + c\, \rho_{\phi^*,\Gamma_N
       }(\ba_N)
    \end{aligned}
     \end{align}
     with a constant $c>0$ depending only on $\alpha>0$, the characteristics of
     $\AAA$ and $\phi$, $k \in \setN$, and the chunkiness $\omega_0>0$.
\end{proposition}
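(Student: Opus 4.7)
My plan is to view Proposition~\ref{prop:stab} as a direct consequence of Lemma~\ref{lem:coer} and Lemma~\ref{lem:funct}, applied to the test function $\bz_h := \bu_h - \PiDG \bu_D^* \in \Vhk$ (which is admissible since $\PiDG \bu_D^* \in \Vhk$). The hard analytic work, which handled the shift $\abs{\avg{\Pia(\nablaDG\bu_h + \Rhk\bu_D^*)}}$ in the stabilization term and the interaction of $\AAA$ with the jump and discrete gradient operators, has already been done in those two lemmas; here it just needs to be combined.

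First, I would rewrite \eqref{eq:primal} in the compact operator form \eqref{eq:op}, namely
\begin{equation*}
\hskp{\bB_h \bu_h}{\bu_h - \PiDG \bu_D^*} = \hskp{\bb_h}{\bu_h - \PiDG \bu_D^*}.
\end{equation*}
For the left-hand side, I invoke the first inequality in Lemma~\ref{lem:coer}, which yields
\begin{equation*}
\hskp{\bB_h \bu_h}{\bu_h - \PiDG \bu_D^*}
\ge c\, \min\{1,\alpha\}\, M_{\phi,h}(\bu_h - \bu_D^*) + c\,\alpha\, m_{\phi_{\abs{\avg{\Pia \bL_h}}},h}(\bu_h - \bu_D^*) - c_\alpha\, \rho_{\phi,\Omega}(\nabla \bu_D^*),
\end{equation*}
with $\bL_h = \nablaDG \bu_h + \Rhk \bu_D^*$. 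For the right-hand side, Lemma~\ref{lem:funct} applied with $\vep > 0$ gives
\begin{equation*}
\bigabs{\hskp{\bb_h}{\bu_h - \PiDG \bu_D^*}}
\le \vep\, M_{\phi,h}(\bu_h - \bu_D^*) + c_\vep\big(\rho_{\phi^*,\Omega}(\ff) + \rho_{\phi^*,\Omega}(\bF) + \rho_{\phi^*,\Gamma_N}(\ba_N) + \rho_{\phi,\Omega}(\nabla \bu_D^*)\big).
\end{equation*}

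Combining the two estimates and choosing $\vep > 0$ small enough (depending on $\alpha$, the characteristics of $\AAA$ and $\phi$, and on $\omega_0$) so that the $\vep M_{\phi,h}(\bu_h - \bu_D^*)$ term on the right is absorbed into the $c\min\{1,\alpha\} M_{\phi,h}(\bu_h - \bu_D^*)$ term on the left, I obtain the asserted bound \eqref{eq:apri2}. The main obstacle in this proof is essentially bookkeeping: one must verify that the test function $\bu_h - \PiDG \bu_D^*$ is indeed compatible with the way the boundary data $\bu_D^*$ enters the definition of $\bB_h$ and $\bb_h$ (so that Lemmas~\ref{lem:coer} and~\ref{lem:funct} apply verbatim), and that the constants $c_\alpha, c_\vep$ produced by those lemmas agree with those claimed in~\eqref{eq:apri2}. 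No further tools beyond the two preceding lemmas and the choice of $\vep$ are needed.
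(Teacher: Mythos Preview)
Your proposal is correct and follows exactly the same approach as the paper: the paper's proof is a single sentence stating that the assertion follows by combining Lemma~\ref{lem:coer} and Lemma~\ref{lem:funct} for $\vep>0$ sufficiently small, choosing $\bz_h=\bu_h-\PiDG\bu_D^*\in\Vhk$ in~\eqref{eq:op}. Your write-up simply spells out this combination in detail.
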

\begin{proof}
  The assertion follows by combining Lemma \ref{lem:coer} and
  Lemma \ref{lem:funct}~for~${\vep> 0}$ sufficiently small, if we choose $\bz_h =\bu_h
  	-\PiDG\bu_D^*\in \Vhk$~in~\eqref{eq:op}. 
\end{proof}
\begin{corollary}\label{cor:stab}
     Let $\AAA$ satisfy Assumption~\ref{ass:1} for a balanced
    {N-func\-tion}
  ${\phi}$, and let $(\bu_h,\bL_h,\bA_h)^\top \in \Vhk \times \Xhk \times \Xhk$ be a solution
  of \eqref{eq:DG} for $\alpha>0$, $h >0$  and $k \in \setN$. Then, it holds
  \begin{align*}
    \begin{aligned}
      &\rho_{\phi,\Omega}(\bL_h ) + \rho_{\phi^*,\Omega}(\bA_h )+h\rho_{\phi^*,\smash{\Gamma_h^{i}}\cup\Gamma_D}\big(\AAA_{\aaa}( h^{-1} \llbracket(\bu_h-\bu_D^*)\otimes\bfn\rrbracket)\big)
     \\&\quad \le  c\, \rho_{\phi, \Omega}(\nabla
      \bu_D^*)  + c\, \rho_{\phi^*,\Omega}(\bfg)
      + c\, \rho_{\phi^*,\Omega}(\bfG) + c\, \rho_{\phi^*,\Gamma_N
       }(\ba_N)
    \end{aligned}
     \end{align*}
  with a constant $c>0$ depending only on $\alpha>0$, the characteristics
        of $\AAA$ and $\phi$,
  	and the chunkiness $\omega_0>0$.   
\end{corollary}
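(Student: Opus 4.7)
The plan is to read the three quantities off Proposition \ref{prop:stab} by means of the identities \eqref{eq:Lh}, \eqref{eq:Ah} and \eqref{eq:id}, together with a few routine Orlicz/DG reductions. Since the stability estimate \eqref{eq:apri2} already controls $M_{\phi,h}(\bu_h-\bu_D^*)$ and $m_{\phi_{\abs{\avg{\Pia\bL_h}}},h}(\bu_h-\bu_D^*)$ by the data, it suffices to bound each of the three terms on the left by these modulars plus $\rho_{\phi,\Omega}(\nabla\bu_D^*)$.

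\textbf{Step 1 ($\rho_{\phi,\Omega}(\bL_h)$).} By \eqref{eq:Lh} and \eqref{eq:id}, $\bL_h=\nablaDG(\bfu_h-\bu_D^*)+\nabla\bu_D^*$ in $\Xhk$. The $\Delta_2$-condition of $\phi$ and the definition of $M_{\phi,h}$ then yield
\begin{equation*}
\rho_{\phi,\Omega}(\bL_h)\le c\,\rho_{\phi,\Omega}(\nablaDG(\bfu_h-\bu_D^*))+c\,\rho_{\phi,\Omega}(\nabla\bu_D^*)\le c\,M_{\phi,h}(\bfu_h-\bu_D^*)+c\,\rho_{\phi,\Omega}(\nabla\bu_D^*),
\end{equation*}
and Proposition \ref{prop:stab} closes the loop.

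\textbf{Step 2 ($\rho_{\phi^*,\Omega}(\bA_h)$).} By \eqref{eq:Ah} we have $\bA_h=\PiDG\AAA(\bL_h)$; since $\phi^*\in\Delta_2$ by Lemma \ref{lem:bal}(ii), the $L^{\phi^*}$-stability of $\PiDG$ gives $\rho_{\phi^*,\Omega}(\bA_h)\le c\,\rho_{\phi^*,\Omega}(\AAA(\bL_h))$, and \eqref{eq:hammerg} converts this into $c\,\rho_{\phi,\Omega}(\bL_h)$, which is handled by Step 1.

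\textbf{Step 3 (boundary term).} From the definition \eqref{eq:flux} one has $\abs{\AAA_a(\bfP)}=\phi_a'(\abs{\bfP})=\phi'(a+\abs{\bfP})\,\tfrac{\abs{\bfP}}{a+\abs{\bfP}}\le \phi'(a+\abs{\bfP})$. Combining $\phi^*\!\circ\phi'\sim\phi$ from \eqref{eq:phi*phi'} with the $\Delta_2$-splitting $\phi(a+t)\le c(\phi(a)+\phi(t))$ gives the pointwise bound
\begin{equation*}
\phi^*(\abs{\AAA_a(\bfP)})\le c\,\phi(a)+c\,\phi(\abs{\bfP}).
\end{equation*}
Applying this with $a=\abs{\avg{\Pia\bL_h}}$ and $\bfP=h^{-1}\jump{(\bfu_h-\bu_D^*)\otimes\bfn}$ on every face of $\Gamma_I\cup\Gamma_D$ and multiplying by $h$, I obtain
\begin{equation*}
h\,\rho_{\phi^*,\Gamma_I\cup\Gamma_D}\bigl(\AAA_{\abs{\avg{\Pia\bL_h}}}(h^{-1}\jump{(\bfu_h-\bu_D^*)\otimes\bfn})\bigr)\le c\,h\,\rho_{\phi,\Gamma_I\cup\Gamma_D}(\abs{\avg{\Pia\bL_h}})+c\,m_{\phi,h}(\bfu_h-\bu_D^*).
\end{equation*}
The second summand is absorbed into $M_{\phi,h}(\bfu_h-\bu_D^*)$. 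For the first, since $\Pia\bL_h$ is piecewise constant, the elementary face-to-element estimate on a face $\gamma=\partial K\cap\partial K'$ with $h\abs{\gamma}\lesssim\abs{K}$, combined with the $L^\phi$-stability of $\Pia$, yields $h\,\rho_{\phi,\Gamma_I\cup\Gamma_D}(\abs{\avg{\Pia\bL_h}})\le c\,\rho_{\phi,\Omega}(\bL_h)$, which is exactly Step 1.

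The only nontrivial step is Step 3: the $\phi^*$-modular on the flux does not directly equal the shifted quantity $m_{\phi_a,h}$ controlled by Proposition \ref{prop:stab}, because $\phi_a^*\neq\phi^*$. The crux is the pointwise $\Delta_2$-reduction $\phi^*(\abs{\AAA_a(\bfP)})\lesssim \phi(a)+\phi(\abs{\bfP})$, which detaches the shift from $\phi_a$ at the price of an additional volume term, and this extra volume term is then exactly what Step 1 controls.
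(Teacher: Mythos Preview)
Your proof is correct and follows essentially the same route as the paper: Step~1 and Step~2 are identical to the paper's arguments \eqref{cor:stab.1}--\eqref{cor:stab.2}, and Step~3 reaches the same splitting $c\,m_{\phi,h}(\bfu_h-\bu_D^*)+c\,\rho_{\phi,\Omega}(\bL_h)$ as \eqref{cor:stab.3}. The only cosmetic differences are that in Step~1 the bound $\rho_{\phi,\Omega}(\Ghk\bw_h)\le c\,M_{\phi,h}(\bw_h)$ is Lemma~\ref{lem:equiW1psi} rather than the bare definition of $M_{\phi,h}$, and in Step~3 you derive the pointwise bound $\phi^*(\abs{\AAA_a(\bfP)})\le c\,\phi(a)+c\,\phi(\abs{\bfP})$ directly from $\phi'_a(t)\le\phi'(a+t)$ and $\Delta_2$, whereas the paper goes through the shift-change estimate \eqref{eq:phi_prime_shift} to obtain $\phi'_a(t)\le c\,\phi'(t)+c\,\phi'(a)$ first; both are equivalent one-line reductions.
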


\begin{proof}
  From \eqref{eq:Lh} and \eqref{eq:id}, it  follows that
  $\bL_h=\Ghk(\bu_h-\bu_D^*)+\nabla\bu_D^*$ in $L^\phi(\Omega)$, which together with the
  properties of $\phi$ and the equivalent expression for the modular
  $M_{\phi,h}$~in Lemma~\ref{eq:equi}, implies that
  \begin{align}
    \begin{aligned}
      \rho_{\phi,\Omega}(\bL_h )& \leq
      c\,\rho_{\phi,\Omega}\big (\Ghk(\bu_h-\bu_D^*)\big )+c\,\rho_{\phi,\Omega}(\nabla
      \bu_D^*)
      \\
      &\leq c\,M_{\phi,h}(\bu_h -\bu_D^*)
      +c\,\rho_{\phi,\Omega}(\nabla \bu_D^*)\,.
    \end{aligned}\label{cor:stab.1}
  \end{align}
  From \eqref{eq:Ah} we know that $\bfA_h=\Pi_h^k\AAA(\bL_h)$ in $\Xhk$, so the
  stability of $\PiDG$ in \eqref{eq:PiDGLpsistable},
  \eqref{eq:hammere2} and \eqref{eq:phi*phi'} imply that
  \begin{align}
    \rho_{\phi^*,\Omega}(\bA_h )\leq c\,\rho_{\phi^*,\Omega}\big ( \phi'(
    \vert  \bL_h\vert ) \big)\leq c\,\rho_{\phi,\Omega}(\bL_h
    )\,.\label{cor:stab.2} 
  \end{align}
  In addition, using the shift change in \eqref{eq:phi_prime_shift}
  and \eqref{eq:hammere}, we find that
  \begin{align*}
    \big\vert\AAA_{\aaa}(h^{-1}
    \jump{(\bu_h-\bu_D^*)\otimes\bfn}\big)\big\vert
    &=\phi_{\aaa}'(h^{-1}\vert
      \llbracket(\bu_h-\bu_D^*)\otimes\bfn\rrbracket\vert )
    \\
    &\leq c\,\phi'(h^{-1}\vert \llbracket(\bu_h-\bu_D^*)\otimes\bfn
      \rrbracket\vert)+c\,\phi'(\aaa
    )\,, 
  \end{align*}
  which, using \eqref{eq:phi*phi'},  the
  discrete trace inequality  \eqref{eq:pol-trace}, and the $L^\phi$-stability~of~$\Pi_h^0$~in \eqref{eq:PiDGLpsistable}, implies that
  \begin{align}
    \begin{aligned}
      &h\rho_{\phi^*,\smash{\Gamma_h^{i}}\cup\Gamma_D}\big(\AAA_{\aaa}(
      h^{-1}
      \llbracket(\bu_h-\bu_D^*)\otimes\bfn\rrbracket)\big)
     \\
      &\leq  c\,h\,\rho_{\phi^*,\smash{\Gamma_h^{i}}\cup\Gamma_D}\big ( \phi'(h^{-1}\vert
      \llbracket(\bu_h-\bu_D^*)\otimes\bfn\rrbracket\vert)\big )\\&\quad+c\,
      h\,\rho_{\phi^*,\smash{\Gamma_h^{i}}\cup\Gamma_D}\big
      (\phi'(\aaa\big )
      \\
      &\leq c\,m_{\phi,h}(\bu_h-\bu_D^*)+c\,
      h\,\rho_{\phi,\smash{\Gamma_h^{i}}\cup\Gamma_D}\big
      (\avg{\Pi_{h}^0\bL_h}\big )
      \\
      &\leq c\,m_{\phi,h}(\bu_h-\bu_D^*)+c\,
      \rho_{\phi,\Omega}\big  (\Pi_{h}^0\bL_h\big )
      \\
      &\leq c\,M_{\phi,h}(\bu_h-\bu_D^*)+c\, \rho_{\phi,\Omega}(\bL_h)\,.
    \end{aligned}\label{cor:stab.3}
  \end{align}
  Resorting to Proposition \ref{prop:stab}, we conclude the assertion~from~\mbox{\eqref{cor:stab.1}--\eqref{cor:stab.3}}.
\end{proof}

\begin{theorem}[Convergence]\label{thm:minty}
  Let $\AAA$ satisfy Assumption~\ref{ass:1} for a balanced
  \linebreak {N-func\-tion} ${\phi}$. For every $h>0$, let 
  $(\bu_h,\bL_h,\bA_h)^\top \in \Vhk \times \Xhk \times \Xhk$ be a solution~of \eqref{eq:DG} for $\alpha\!>\!0$ and $k \!\in\! \setN$.
  Moreover, let $(h_n)_{n\in \mathbb{N}}\!\subseteq \!\setR^{>0}$ be
  any sequence such that $h_n\!\to\! 0$ $(n\!\to\! \infty)$~and define
  $(\bu_n,\bL_n,\bA_n)^\top\vcentcolon=(\bu_{h_n},\bL_{h_n},\bA_{h_n})^\top \in U_{h_n}^k
 \times X_{h_n}^k \times X_{h_n}^k $, $n\in \mathbb{N}$. Then,~it~holds
  \begin{align}
    \begin{aligned}
      \bu_n&\weakto \bu&&\quad\text{ in }L^{\phi}(\Omega)&&\quad(n\to \infty)\,,\\
      \bL_n&\weakto \nb \bu&&\quad\text{ in }L^{\phi}(\Omega)&&\quad(n\to \infty)\,,\\
      \bu_n&\weakto \bu&&\quad\text{ in
      }L^{\phi}(\Gamma_N)&&\quad(n\to \infty)\,,
    \end{aligned}\label{eq:minty0.1}
  \end{align}
  where $\bu\in W^{1,\phi}(\Omega)$ is the unique weak solution of
  \eqref{eq:p-lap}, i.e., $\bu=\bu_D$ in $L^{\phi}(\Gamma_D)$ and for
  every $\bz\in \smash{W^{1,\phi}_{\Gamma_D}(\Omega)}$, it holds
  \begin{align}
    (\AAA(\nb\bu),\nb\bz)=(\bfg,\bz)+(\bfG,\nb\bz)+\langle
    \ba_N,\bz\rangle_{\Gamma_N}.\label{eq:minty0.2} 
  \end{align}
\end{theorem}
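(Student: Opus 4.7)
The plan is to carry out a classical Minty-type monotonicity argument, adapted to the DG Orlicz setting. Since $\phi$ is balanced, both $\phi$ and $\phi^*$ satisfy the $\Delta_2$-condition, so $L^\phi(\Omega)$ and $L^{\phi^*}(\Omega)$ are reflexive separable Banach spaces. Corollary~\ref{cor:stab} provides uniform-in-$n$ bounds on $\rho_{\phi,\Omega}(\bL_n)$, $\rho_{\phi^*,\Omega}(\bA_n)$, and the jump modular $m_{\phi,h_n}(\bu_n-\bu_D^*)$. Combined with a DG--Poincaré inequality in Orlicz form and a DG--trace estimate on $\Gamma_N$, I also obtain uniform bounds on $\bu_n$ in $L^\phi(\Omega)$ and $L^\phi(\Gamma_N)$. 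Passing to a subsequence (not relabelled), there exist $\bu \in L^\phi(\Omega)$, $\tilde{\bL}\in L^\phi(\Omega)$, $\tilde{\bA}\in L^{\phi^*}(\Omega)$ such that the three weak convergences in \eqref{eq:minty0.1} hold, albeit with $\tilde{\bL}$ in place of $\nabla\bu$.

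Next I would identify $\tilde{\bL} = \nabla \bu$ in the sense of distributions on $\Omega$ and $\bu = \bu_D$ on $\Gamma_D$. Using $\bL_n = \Ghnk\bu_n + \Rhnk\bu_D^*$ together with the definition of $\Ghnk$, I test against an arbitrary $\bfX \in C^\infty_c(\Omega)$ and integrate by parts element-wise: the interior-jump contribution is bounded by $c\,\rho_{\phi,\Omega}(\bfX)^{1/?} \,m_{\phi,h_n}(\bu_n)^{1/?}$ via a Young-type pairing, and Corollary~\ref{cor:stab} combined with $h_n\to 0$ forces $m_{\phi,h_n}(\bu_n-\bu_D^*)\to 0$, hence passage to the limit yields $\langle \tilde{\bL},\bfX\rangle = -\langle \bu,\divergence \bfX\rangle$. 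The same reasoning at boundary faces on $\Gamma_D$ (testing against $\bfX\in C^\infty(\overline\Omega)$ with $\bfX \bfn$ supported in $\Gamma_D$) produces $\bu = \bu_D$ in the trace sense. Thus $\bu\in W^{1,\phi}(\Omega)$ with $\bu - \bu_D^* \in W^{1,\phi}_{\Gamma_D}(\Omega)$ and $\tilde{\bL}=\nabla\bu$.

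The third step is to pass to the limit in \eqref{eq:primal} tested against $\PiDG \bz$ for an arbitrary $\bz \in W^{1,\phi}_{\Gamma_D}(\Omega)\cap C^\infty(\overline\Omega)$. The right-hand side linear terms converge to the right-hand side of \eqref{eq:minty0.2} by weak convergence and the approximation properties of $\PiDG$ (the $\Ghnk \PiDG \bz$ converges strongly to $\nabla \bz$ in $L^\phi(\Omega)$, by Lemma \ref{eq:equi}, \eqref{eq:PiDGapproxmglobal} and $\bz|_{\Gamma_D}=0$). The stabilization term on the right pairs $\AAA_{|\{\Pia\bL_n\}|}(h_n^{-1}\jump{(\bu_n-\bu_D^*)\otimes\bn})$ against $\jump{\PiDG\bz \otimes \bn}$; via Young's inequality in the shifted N-function $\phi_{|\{\Pia \bL_n\}|}$ and Lemma \ref{lem:shift}(ii), this term is controlled by $\vep\,m_{\phi_{|\{\Pia\bL_n\}|},h_n}(\bu_n-\bu_D^*) + c_\vep\,m_{\phi,h_n}(\PiDG \bz)$, the first factor being uniformly bounded by Proposition \ref{prop:stab} and the second tending to zero since $\bz|_{\Gamma_D}=0$ and $\PiDG$ approximates $\bz$. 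Therefore, passing to the limit gives $(\tilde{\bA},\nabla\bz) = (\bff,\bz) + (\bF,\nabla\bz) + \langle\ba_N,\bz\rangle_{\Gamma_N}$ for all such $\bz$.

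It remains to identify $\tilde{\bA}=\AAA(\nabla\bu)$, which I view as the main obstacle. Here I invoke Minty's trick: for arbitrary $\bw \in W^{1,\phi}_{\Gamma_D}(\Omega)\cap C^\infty(\overline\Omega)$, I test \eqref{eq:op} with $\bu_n - \PiDG(\bu_D^* + \bw)$ and exploit monotonicity \eqref{eq:hammera} together with the fact that the stabilization bilinear form is non-negative on diagonal arguments (again by Proposition~\ref{lem:hammer} applied to $\phi_a$) to obtain, in the limit $n\to\infty$,
\begin{align*}
\bighskp{\tilde{\bA} - \AAA(\nabla(\bu_D^*+\bw))}{\nabla\bu - \nabla(\bu_D^*+\bw)} \ge 0.
\end{align*}
Choosing $\bw = (\bu-\bu_D^*) \pm t\, \boldsymbol{\eta}$ for $\boldsymbol{\eta} \in C^\infty_c(\Omega)$ and $t\to 0^+$, hemicontinuity of $\AAA$ and density of smooth functions in $W^{1,\phi}_{\Gamma_D}(\Omega)$ (valid by $\Delta_2$) yield $\tilde{\bA} = \AAA(\nabla\bu)$ and hence \eqref{eq:minty0.2}. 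Uniqueness of the weak solution follows from strict monotonicity \eqref{eq:hammera}, so the entire sequence converges, not only the subsequence. The delicate point throughout is controlling the nonlocal shift $|\{\Pia\bL_n\}|$ in the stabilization via the shift-change Lemma \ref{lem:change2} and the a~priori bound of Proposition~\ref{prop:stab}, preventing the stabilization term from obstructing either the limit passage or the Minty comparison.
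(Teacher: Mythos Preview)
Your overall strategy---a priori bounds, weak compactness, identification of the limit DG gradient as $\nabla\bu$, limit passage in the discrete equation, and Minty's trick---matches the paper's proof. However, there is a genuine gap in your identification step.

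You assert that ``Corollary~\ref{cor:stab} combined with $h_n\to 0$ forces $m_{\phi,h_n}(\bu_n-\bu_D^*)\to 0$.'' This is false: Proposition~\ref{prop:stab} (and Corollary~\ref{cor:stab}) give only the uniform bound $m_{\phi,h_n}(\bu_n-\bu_D^*)\le M_{\phi,h_n}(\bu_n-\bu_D^*)\le c$, \emph{not} convergence to zero. Without extra regularity on $\bu$ there is no reason for the jump modular to vanish in the limit. Consequently your proposed estimate of the ``interior-jump contribution'' by $c\,\rho_{\phi,\Omega}(\bfX)^{1/?}\,m_{\phi,h_n}(\bu_n)^{1/?}$ does not yield a term that tends to zero, and the identification $\tilde{\bL}=\nabla\bu$ is unproved as written.

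The paper closes this gap (in Lemma~\ref{lem:wc}) by exploiting the lifting structure of $\Ghnk$. Testing $(\bL_n,\bfX)$ with $\bfX\in C^\infty_0(\Omega')$ and using that $\Rhnk\bu_n\in X_{h_n}^k$ (so pairing with $\bfX$ equals pairing with $\Pi_{h_n}^k\bfX$), element-wise integration by parts leaves the residual
\[
\bigskp{\jump{\bu_n\otimes\bfn}}{\{\bfX-\Pi_{h_n}^k\bfX\}}_{\Gamma_I'\cup\Gamma_D'}
\le c\,h_n\big(m_{\phi,h_n}(\bu_n)+m_{\phi^*,h_n}(\bfX-\Pi_{h_n}^k\bfX)\big),
\]
where the Young split carries an \emph{extra factor $h_n$} (write $\jump{\cdot}\cdot\{\cdot\}=h_n^2\,(h_n^{-1}\jump{\cdot})\cdot(h_n^{-1}\{\cdot\})$). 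Both modulars on the right are merely bounded---the first by Proposition~\ref{prop:stab}, the second by \eqref{eq:PiDGapproxmglobal}---and it is the prefactor $h_n$ that drives the term to zero. Once you insert this mechanism, the rest of your outline (limit passage in \eqref{eq:primal} via the $\vep$-Young trick against the bound \eqref{eq:ah}, and the Minty comparison) goes through essentially as in the paper.
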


\begin{proof}
  Resorting to the theory of monotone operators (cf.~\cite{zei-IIB}), we~observe~that~\eqref{eq:p-lap} admits a
  unique~weak~\mbox{solution}, since $\AAA$ is strictly
  monotone. \!Thus, to prove~the~convergences \eqref{eq:minty0.1} for
  the entire sequence to the unique weak~solution~${\bu\in
    W^{1,\phi}(\Omega)}$ of \eqref{eq:p-lap}, it is sufficient to
  show that each sequence has a subsequence that satisfies \eqref{eq:minty0.1}.  Then, the
  assertion~for the entire sequence follows from the standard
  convergence principle. To this end, we adapt Minty's trick to our
  situation. Appealing~to~Proposition~\ref{prop:stab}, it holds
  $\smash{\sup_{n\in
      \mathbb{N}}{M_{\phi,h_n}(\bu_n-\bu_D^*)}}<\infty$, which,
  resorting to Lemma~\ref{lem:wc}, yields a not relabeled
  subsequence~and~a~function
  ${\tilde{\bu} \in W^{1,\phi}_{\Gamma_D}(\Omega)}$~such~that
  \begin{align}
    \begin{aligned}
      \bu_n-\bu_D^*&\weakto \tilde{\bu}&&\quad\text{ in }L^{\phi}(\Omega)&&\quad(n\to \infty)\,,\\
      \boldsymbol{\mathcal{G}}_{h_n}^k(\bu_n-\bu_D^*)&\weakto
      \nabla\tilde{\bu}&&\quad\text{ in }L^{\phi}(\Omega)&&\quad(n\to
      \infty)\,,
      \\
      \bu_n-\bu_D^*&\weakto \tilde{\bu}&&\quad\text{ in
      }L^{\phi}(\Gamma_N)&&\quad(n\to \infty)\,.
    \end{aligned}\label{eq:minty1}
  \end{align}
  Therefore, introducing the notation
  $\bu\vcentcolon=\tilde{\bu}+\bu_D^*\in W^{1,\phi}(\Omega)$, in particular,
  exploiting that
  $\boldsymbol{\mathcal{G}}_{h_n}^k(\bu_n-\bu_D^*)=\bL_n-\nb
  \bu_D^*$ (cf.~\eqref{eq:id}), 
  \eqref{eq:minty1} is exactly \eqref{eq:minty0.1} and
  yields directly that $\bu=\bu_D$ in $L^{\phi}(\Gamma_D)$.
  Resorting to
  Corollary~\ref{cor:stab}~and~the~\mbox{reflexivity}~of~$L^{\phi^*}(\Omega)$,
  we obtain a further not relabeled subsequence~and~a~function
  $\bA\in \smash{L^{\phi^*}(\Omega)}$ such that
  \begin{align}
    \bA_n\weakto \bA\quad\text{ in }L^{\phi^*}(\Omega)\quad(n\to \infty).\label{eq:minty3}
  \end{align}
  Next, let $\bz\in \smash{W^{1,\phi}_{\Gamma_D}(\Omega)}$ be arbitrary and
  define $\bz_n\vcentcolon=\smash{\Uppi_{h_n}^k\bz\in V_{h_n}^k}$ for every
  $n\in \mathbb{N}$.  Then, appealing to the convergence properties
  for $\smash{\Uppi_{h_n}^k}$ in Lemma \ref{lem:conv},
  Corollary~\ref{cor:stabpi}, and the trace inequality in
  \eqref{eq:trace}, we obtain for $n\to \infty$ that
  \begin{align}
    \begin{aligned}
      \rho_{\phi, \Omega}\big
    (\boldsymbol{\mathcal{G}}_{h_n}^k(\bz_n-\bz)\big )+
      \rho_{\phi, \Omega}(\bz_n-\bz)+M_{\phi,h_n}(\bz_n-\bz)+\rho_{\phi,
        \Gamma_N}(\bz_n-\bz)&\to 0\,.     \hspace*{-3mm}
    \end{aligned}\label{eq:minty4}
  \end{align}
  Apart from that, appealing to Corollary~\ref{cor:stab}, we have that
  \begin{align}
  {\sup_{n\in \mathbb{N}}{h_n\,\rho_{\phi^*,\smash{\Gamma_h^{i}}\cup\Gamma_D}
    \big(\boldsymbol{\mathcal{A}}_{\smash{\aaan}}
    (h_n^{-1}\jump{(\bu_n-\bu_D^*)\otimes
    \bn} )\big)}} \le c\,.\label{eq:ah}
  \end{align}
  Since $\bz_n\in U_{h_n}^k$ is an admissible test function in \eqref{eq:DG}, for every $n\in \mathbb{N}$,  we have
  that 
  \begin{align}
      \big(\bA_n,\boldsymbol{\mathcal{G}}_{h_n}^k\bz_n\big)
      &=(\bfg,\bz_n)+\big(\bfG,\boldsymbol{\mathcal{G}}_{h_n}^k\bz_n\big)+\langle
      \ba_N,\bz_n\rangle_{\Gamma_N}\label{eq:minty5}
      \\
      &\quad-\alpha\big\langle
      \boldsymbol{\mathcal{A}}_{\aaan}(h_n^{-1}\jump{(\bu_n-\bu_D^*)\otimes
        \bn}), \jump{(\bz_n-\bz)\otimes
        \bn}\big\rangle_{\smash{\Gamma_h^{i}}\cup\Gamma_D}\,,\notag 
  \end{align}
  where we used that $\jump{\bz\otimes \bn}=\bfzero $ on
  ${\smash{\Gamma_h^{i}}\cup\Gamma_D}$.
  This together with \eqref{eq:minty4} and \eqref{eq:ah} yields for
  $n\to \infty$ that for every
  $\smash{\bz\in W^{1,\phi}_{\Gamma_D}(\Omega)}$
  \begin{align}
    (\bA,\nabla\bz)=(\bfg,\bz)+(\bfG,\nb\bz)+\langle \ba_N,\bz\rangle_{\Gamma_N}\,.\label{eq:minty6}
  \end{align}
  For arbitrary $\bz\in
  W^{1,\phi}(\Omega)$, we set
  $\bz_n\vcentcolon=\Pi_{h_n}^k\bz\in V_{h_n}^k$, $n\in
  \mathbb{N}$. Then,~recalling~that
  $\bA_n=\Uppi_{h_n}^k\boldsymbol{\mathcal{A}}(\bL_n)$ in $\smash{X_{h_n}^k}$,
  $n\in \mathbb{N}$, the monotonicity of 
  $\boldsymbol{\mathcal{A}}$, the self-adjointness of
  $\Uppi_{h_n}^k$, and \eqref{eq:minty5}, for every $n\in \mathbb{N}$ and
  $\bz\in W^{1,\phi}(\Omega)$, further yield that
  \begin{align}
      0&\leq\big(\AAA(\bL_n)-\AAA(\nabla_{h_n}\bz_n),\bL_n-\nabla_{h_n}\bz_n\big) \notag 
      \\
      &=\big(\bA_n-\AAA(\nabla_{h_n}\bz_n),\bL_n-\nabla_{h_n}\bz_n\big) \notag 
      \\
      & =\big(\bA_n,\boldsymbol{\mathcal{G}}_{h_n}^k(\bu_n-\Uppi_{h_n}^k\bu_D^*)\big)
 +\big(\bA_n,\boldsymbol{\mathcal{G}}_{h_n}^k(\Uppi_{h_n}^k\bu_D^*-\bu_D^*)
 +\nb\bu_D^*-\nb_{h_n}\bz_n\big) \notag 
      \\
      &\quad+\big(\AAA(\nabla_{h_n}\bz_n),\nabla_{h_n}\bz_n-\bL_n\big) \label{eq:minty7}
      \\
      &
      =\big(\bfg,\bu_n-\Uppi_{h_n}^k\bu_D^*\big)+\big(\bfG,
      \boldsymbol{\mathcal{G}}_{h_n}^k(\bu_n-\Uppi_{h_n}^k\bu_D^*)\big)+\big\langle
      \ba_N,\bu_n-\Uppi_{h_n}^k\bu_D^*\big\rangle_{\Gamma_N}\notag 
      \\
      &\quad  -\alpha\,\big\langle
      \AAA_{\aaan}(h_n^{-1}\jump{(\bu_n-\bu_D^*)\otimes
        \bn}),\jump{(\bu_n-\Uppi_{h_n}^k\bu_D^*\pm\bu_D^*)\otimes
        \bn}\big\rangle_{\smash{\Gamma_h^{i}}\cup\Gamma_D}\notag 
      \\
      &\quad+\big(\bA_n,\boldsymbol{\mathcal{G}}_{h_n}^k(\Uppi_{h_n}^k\bu_D^*-\bu_D^*)
      +\nb\bu_D^*-\nb_{h_n}\bz\big)
+\big(\AAA(\nb_{h_n}\bz_n),\nb_{h_n}\bz_n-\bL_n\big)\,.
    \hspace*{-5mm}\notag 
  \end{align}
  Moreover, using $\AAA_{\aaa}(\bP):\bP\ge 0$ for every
  ${\bP\in \setR^{d\times n}}$, we conclude from~\eqref{eq:minty7} that
  for every $n\in \mathbb{N}$, there holds
  \begin{align}
    \begin{aligned}
      0&\leq\big(\bA_n-\AAA(\nb_{h_n}\bz_n),\bL_n-\nb_{h_n}\bz_n\big)
      \\
      &  \leq \big(\bfg,\bu_n-\Uppi_{h_n}^k\bu_D^*\big)+
      \big(\bfG,\boldsymbol{\mathcal{G}}_{h_n}^k(\bu_n-\Uppi_{h_n}^k\bu_D^*)\big)+\big\langle
      \ba_N,\bu_n-\Uppi_{h_n}^k\bu_D^*\big\rangle_{\Gamma_N}
      \\
      &\quad  +\alpha\big\langle
      \AAA_{\aaan}(h_n^{-1}\jump{(\bu_n-\bu_D^*)\otimes
        \bn}),\jump{(\Uppi_{h_n}^k\bu_D^*-\bu_D^*)\otimes
        \bn}\big\rangle_{\smash{\Gamma_h^{i}}\cup\Gamma_D}
      \\
      &\quad+\big(\bA_n,\boldsymbol{\mathcal{G}}_{h_n}^k
     (\Uppi_{h_n}^k\bu_D^*-\bu_D^*)+\nb\bu_D^*-\nb_{h_n}\bz_n\big)
      +\big(\AAA(\nb_{h_n}\bz_n),\nb_{h_n}\bz_n-\bL_n\big)\,.
    \end{aligned}\hspace*{-8mm}\label{eq:minty8}
  \end{align}
  Hence, by passing for $n\to \infty$ in \eqref{eq:minty8}, taking into
  account \eqref{eq:minty1}--\eqref{eq:minty4},~the~conver-gence
  properties of $\Uppi_{h_n}^k\!$ in Lemma \ref{lem:conv},
  Corollary~\ref{cor:stabpi}, the trace~\mbox{inequality}~in~\eqref{eq:trace}, the estimate \eqref{eq:ah}, and the fact that
  $\AAA$ generates a Nemyckii operator, we conclude for every
  $\bz\in W^{1,\phi}(\Omega)$ that 
  \begin{align}
    \begin{aligned}
      0&\leq (\bfg,\bu-\bu_D^*)+ (\bfG,\nb(\bu-\bu_D^*) )+\langle
      \ba_N,\bu-\bu_D^*\rangle_{\Gamma_N}
      \\
      &\quad
      + (\bA,\nb(\bu_D^*-\bz) 
      )+(\AAA(\nabla\bz),\nabla\bz-\nb\bu)
      \\
      &= (\bA,\nabla(\bu-\bu_D^*) )
      +  (\bA,\nb(\bu_D^*-\bz) )+  (\AAA(\nabla\bz),\nabla\bz-\nb\bu)
      \\
      &=(\bA-\AAA(\nabla\bz),\nb\bu-\nabla\bz)\,,
    \end{aligned}\label{eq:minty9}
  \end{align}
  where we used for the first equality sign that
  $\bu-\bu_D^*=\tilde{\bu}\in \smash{W^{1,\phi}_{\Gamma_D}(\Omega)}$
  and,~thus,~\eqref{eq:minty6} applies with
  $\bz=\bu-\bu_D^*\in \smash{W^{1,\phi}_{\Gamma_D}(\Omega)}$. Eventually, choosing
  $\bz\vcentcolon=\bu\pm \tau \tilde{\bz}\in \smash{W^{1,\phi}(\Omega)}$ in
  \eqref{eq:minty9} for arbitrary $\tau\!\in\! (0,1)$ and
  $\tilde{\bz}\!\in\! \smash{W^{1,\phi}_{\Gamma_D}(\Omega)}$, diving
  by $\tau\!>\!0$ and passing~for~${\tau\!\to\! 0}$, for every
  $\tilde{\bz}\in \smash{W^{1,\phi}_{\Gamma_D}(\Omega)}$, we conclude
  that $ (\bA-\AAA(\nb\bu),\nabla\tilde{\bz})=0$,~which,~due~to~\eqref{eq:minty6}, implies that $\bu\in W^{1,\phi}(\Omega)$
  satisfies \eqref{eq:minty0.2}.
\end{proof}

\begin{remark}
	\label{rem:zerobnd}
	There are only few numerical investigations for nonlinear
        problems with Orlicz-structure showing the convergence of
        discrete solutions to a weak solution. We are only aware of
        the studies \cite{dr-interpol,DKS13,EW13,BC15,Ruf17}. Non of
        these contributions uses DG methods. For the subclass of
        nonlinear problems of $p$-Laplace~type~DG~methods have been
        used in \cite{ern-p-laplace,BufOrt09,dkrt-ldg,CS16,sip,QS19}. In all
        these investigations,~only~the~case $\bfG=\bfzero$ is
        treated. Thus, to the best of the author's knowledge Theorem
        \ref{thm:minty} is the first convergence result for a DG
        scheme in an Orlicz-setting for general~\mbox{right-hand}~sides
$\bfg -\divo \bfG$.
\end{remark}

\section{Error estimates}
\label{ssec:primalerror}

In order to establish error estimates, we need to find a system similar~to~\eqref{eq:DG},~which is
satisfied by a solution of our original problem \eqref{eq:p-lap} in the case
$\bfG=\bfzero$.  Using the notation ${\bL=\nabla \bu}$, ${\bA
  =\AAA(\bL)}$, we~find~that
$(\bfu, \bfL, \bfA)^\top \in W^{1,\phi}(\Omega)\times L^\phi(\Omega) \times
L^{\phi^*}(\Omega)$. If, in addition, $\bA \in W^{1,1}(\Omega)$, we~observe~as~in~\cite{dkrt-ldg}, i.e., using integration-by-parts, the boundary
conditions, the properties~of~$\PiDG$, the definition of the discrete
gradient, and of the jump functional, that 
\begin{align}\label{eq:cont}
  \begin{aligned}
    \hskp{\bfL}{\bfX_h} &= \hskp{\nabla \bfu}{ \bfX_h}\,,
    \\
    \hskp{\bfA}{\bfY_h} &= \bighskp{\AAA(\bfL)}{ \bfY_h}\,,
    \\
    \bighskp{\bfA}{\nablaDG \bfz_h} &= \hskp{\bfg}{\bfz_h} +
    \skp{\bfa_N}{\bfz_h}_{\Gamma_N} { +
      \bigskp{\avg{\bA}-\bigavg{\PiDG\bA}}{\jump{\bz_h\otimes
          \bn}}_{\smash{\Gamma_h^{i}}\cup \Gamma_D}}
  \end{aligned}
\end{align}  
is satisfied for all $(\bfX_h,\bfY_h,\bfz_h)^\top \in \Xhk \times \Xhk \times \Vhk$.
Using this and \eqref{eq:primal}, we arrive at
\begin{align}
  \label{eq:errorprimal}
  \begin{aligned}
    & \bighskp{\AAA(\nablaDG \bfu_h +\Rhk \bfu_D^*) - \AAA(\nabla
      \bfu)}{\nablaDG \bfz_h}
    \\
    &\quad + \alpha \bigskp{\AAA_{\aaal}(h^{-1} \jump{(\bfu_h -\bu_D^*)\otimes
        \bfn})}{ \jump{\bfz_h \otimes \bfn}}_{\smash{\Gamma_h^{i}}\cup \Gamma_D}
    \\
    &={\bigskp{\bigavg{\PiDG\bA}-\avg{\bA}}{\jump{\bz_h\otimes
          \bn}}_{\smash{\Gamma_h^{i}}\cup \Gamma_D}}\,,
  \end{aligned}
\end{align}
which is satisfied for all $\bz_h \in \Vhk$. 

Before we prove the convergence rate in Theorem \ref{thm:error}, we derive some estimates.

\begin{lemma}\label{lem:e4}
  Let $\AAA$ satisfy Assumption~\ref{ass:1} for a balanced
   {N-func\-tion} ${\phi}$ and $k\in \mathbb{N}$. Moreover, let $\bu \in W^{1,\phi}(\Omega)$ satisfy
   $\bF(\nabla \bu) \in W^{1,2}(\Omega)$. Then, we have that
   \begin{align}
    \label{eq:e4}
    \rho_{\phi_{\abs{\nabla \bu}},\Omega}\big({ \Rhk(\bfu-\PiDG\bu)}\big)
    &\le  c\, h^2\, \norm{\nabla \bF(\nabla  \bu)}_2^2  
  \end{align}
with a constant $c>0$ depending only on the
characteristics of ${\AAA }$ and $\phi$, and the chunkiness
$\omega_0>0$. The same assertion is valid for the Scott--Zhang interpolation operator
$\PiSZ$, defined in \cite{zhang-scott}, instead of $\PiDG$. 
\end{lemma}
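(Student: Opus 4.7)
\textit{Proof sketch.} The plan is to reduce the modular of $\Rhk(\bu-\PiDG\bu)$ to a face modular via a shifted stability of the jump operator, and then to bound this face modular by $h^{2}\norm{\nabla\bF(\nabla\bu)}_2^2$ using standard Orlicz approximation theory. First, since $\bu\in W^{1,\phi}(\Omega)$, the interior traces of $\bu$ match, so that $\jump{(\bu-\PiDG\bu)\otimes\bn}_\gamma=-\jump{\PiDG\bu\otimes\bn}_\gamma$ on every $\gamma\in\Gamma_I$, and $\jump{(\bu-\PiDG\bu)\otimes\bn}_\gamma=\text{tr}^\Omega_\gamma(\bu-\PiDG\bu)\otimes\bn$ on every $\gamma\in\Gamma_D$. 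In either case, the jump is pointwise controlled on $\gamma$ by $(\bu-\PiDG\bu)$ traces of the adjacent element(s).

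Second, I would establish a shifted analogue of the standard $L^{\phi}$-stability \eqref{eq:Rgammaest} of $\Rhk$, namely
\begin{align*}
\rho_{\phi_{\abs{\nabla\bu}},\Omega}(\Rhk\bw) \le c\, h\, \rho_{\phi_{\abs{\nabla\bu}},\Gamma_I\cup\Gamma_D}(h^{-1}\jump{\bw\otimes\bn})\,.
\end{align*}
Since $\Rhk(\bw)|_K$ involves only faces of $\partial K$, the estimate reduces to an element-wise bound using the duality definition \eqref{eq:2.25}, shifted Young's inequality, and a scaled inverse/trace inequality. The non-constant shift $\abs{\nabla\bu}$ is replaced on each $K$ by the constant $\abs{\mean{\Pia\nabla\bu}_K}$ via Lemma~\ref{lem:change2} at the price of an additional $\int_K\abs{\bF(\nabla\bu)-\bF(\mean{\Pia\nabla\bu}_K)}^2\,\mathrm{d}x$-term, which by \eqref{eq:hammera} and the $W^{1,2}$-approximation of $\bF(\nabla\bu)$ is absorbed into $c\,h^{2}\norm{\nabla\bF(\nabla\bu)}_{L^2(S_K)}^2$.

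Third, applying the shifted stability to $\bw=\bu-\PiDG\bu$, it remains to control the face modular. An element-wise trace inequality combined with a Poincar\'e-type estimate in the shifted Orlicz space yields
\begin{align*}
h\int_\gamma\phi_{\abs{\nabla\bu}}\bigl(h^{-1}\abs{\jump{(\bu-\PiDG\bu)\otimes\bn}}\bigr)\,\mathrm{d}s \le c\int_{S_K}\phi_{\abs{\nabla\bu}}(\abs{\nabla\bu-\nabla\PiDG\bu})\,\mathrm{d}x\,,
\end{align*}
and the right-hand side is, by \eqref{eq:hammera}, equivalent to $\int_{S_K}\abs{\bF(\nabla\bu)-\bF(\nabla\PiDG\bu)}^2\,\mathrm{d}x$, which is bounded by $c\,h^{2}\norm{\nabla\bF(\nabla\bu)}_{L^2(S_K)}^2$ through the standard $L^{2}$-approximation of $\bF(\nabla\bu)\in W^{1,2}(\Omega)$ by $\bF(\nabla\PiDG\bu)$ (cf.\ \cite{dkrt-ldg}). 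Summation over $K$ using the finite overlap of patches yields the claim. The Scott--Zhang case is identical, since $\PiSZ$ enjoys the same local stability and approximation properties as $\PiDG$.

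The main obstacle is the shifted character of the estimates in Steps~2 and~3: the shift $\abs{\nabla\bu}$ is not piecewise constant, so results for a fixed shift do not apply directly. The resolution is Lemma~\ref{lem:change2}, which permits freezing the shift to a local constant on each patch at a cost that is itself absorbed into the target quantity $h^{2}\norm{\nabla\bF(\nabla\bu)}_2^2$.
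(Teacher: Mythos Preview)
Your proposal is correct and follows essentially the same route as the paper: localize to patches, freeze the variable shift $|\nabla\bu|$ to a local constant via Lemma~\ref{lem:change2} (absorbing the resulting $|\bF(\nabla\bu)-\bF(\text{const})|^2$ term into $h^2\|\nabla\bF(\nabla\bu)\|_2^2$ by Poincar\'e), apply the constant-shift stability of the local jump operator together with the trace/approximation estimate \eqref{eq:PiDGapproxmlocal}, and finish with Lemma~\ref{pro:SZnablaF}. The only cosmetic differences are that the paper freezes to $|\mean{\nabla\bu}_{S_\gamma}|$ rather than $|\mean{\Pia\nabla\bu}_K|$ and organizes the argument face-by-face via $\boldsymbol{\mathcal R}^k_{h,\gamma}$; also note that the intermediate ``shifted stability'' inequality you display in Step~2 should really carry the constant shift (plus error term) on the right-hand side, since with the variable shift it does not hold as written.
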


\begin{proof}
Since for every $\bfv \in \WDG$ we have $\smash{\Rhk}\bfv=\sum_{\gamma \in
\smash{\Gamma_h^{i}}\cup \Gamma_D} \smash{\boldsymbol{\mathcal
R}^k_{h,\gamma}}\bfv$ with $\textup{supp}\smash{\boldsymbol{\mathcal
R}^k_{h,\gamma}}\bfv\subseteq S_\gamma$, where for each face ${\gamma\in \smash{\Gamma_h^{i}} \cup
\Gamma_D}$ the set $S_\gamma$ consist of at most two elements
$K \in \mathcal T_h$, and the fact that $\smash{\Omega
=\bigcup_{\gamma\in \smash{\Gamma_h^{i}}\cup \Gamma_D}{S_\gamma}}$, it is
sufficient to treat $\smash{\boldsymbol{\mathcal
R}^k_{h,\gamma}}(\bfu-\PiDG\bu)$ on $S_\gamma$~to~obtain~the global
result \eqref{eq:e4} by summation. The shift
  change in Lemma \ref{lem:change2}, the local stability properties of
  $\boldsymbol{\mathcal R}^k_{h,\gamma}$~in~Lemma~\ref{lem:stabRhk},
  Proposition~\ref{lem:hammer},
  ${\bfu\!-\!\PiDG \bu\!=\!\bfu\!-\!\PiDG \bu\!+\!\PiDG(\bfu\!-\!\PiDG \bu)}$, the
  approximation property of $\PiDG$ in \eqref{eq:PiDGapproxmlocal} for
  $\bfu-\PiDG \bu$, Poincar\'e's inequality~on~$S_\gamma $together
  with \cite[Lemma A.12]{bdr-phi-stokes}, and again
  a shift change~in~Lemma~\ref{lem:change2} together with Poincar\'e's
  inequality on $S_\gamma $, yield
  \allowdisplaybreaks
  \begin{align*}
    &\int_{S_\gamma}\phi_{\abs{{\nabla
      \bu}}} \big(|{ \boldsymbol{\mathcal R}^k_{h,\gamma}
      (\bfu-\PiDG \bu )}|\big)\, \textrm{d}x
    \\
    &\le c\, \int_{S_\gamma}\phi_{\abs{\mean{\nabla
      \bu}_{S_\gamma}}} \big(|{ \boldsymbol{\mathcal R}^k_{h,\gamma}
      (\bfu-\PiDG \bu)}|\big) +\phi_{\abs{{\nabla \bu}}} (|\nabla
      \bfu-\mean{\nabla\bu}_{S_\gamma}|)\, \textrm{d}x
    \\
    &\le c\, h\int_{\gamma}\phi_{\abs{\mean{\nabla
      \bu}_{S_\gamma}}} \big( h^{-1} |\jump{(\bfu-\PiDG \bu)\otimes
  \bn}|\big )\,
      \textrm{d}s+\int_{S_\gamma}\abs{\bF(\nabla 
      \bfu)-\bF(\mean{\nabla\bu}_{S_\gamma})}^2\, \textrm{d}x
    \\
    &\le c\,\int_{S_\gamma} \phi_{\abs{\mean{\nabla
      \bu}_{S_\gamma}}} \big(|{ \nabla _h (\bfu-\PiDG \bu)}|\big) \, \textrm{d}x+c\,\int_{S_\gamma} 
      h^2\,\abs{\nabla \bF(\nabla \bu)}^2   \, \textrm{d}x
    \\
    &\le c\,\int_{S_\gamma} \phi_{\abs{{\nabla
      \bu}}} \big(|{ \nabla _h (\bfu-\PiDG \bu)}|\big) \, \textrm{d}x+c\,\int_{S_\gamma}
      h^2\,\abs{\nabla \bF(\nabla \bu)}^2   \, \textrm{d}x\,.
  \end{align*}
  By summation over $\gamma \in \smash{\Gamma_h^{i}}\cup \Gamma_D$, using
  Proposition \ref{lem:hammer} and~Lemma~\ref{pro:SZnablaF}~in~doing~so, we
  conclude that
  \begin{align*}    
     \rho_{\phi_{\abs{\nabla \bu}},\Omega}\big({ \Rhk
      (\bfu-\PiDG\bu )}\big)
    &\le c\,\rho_{\phi_{\abs{\nabla \bu}},\Omega}\big({ \nabla
      \bfu-\nabla _h\PiDG\bu}\big)+ c\, h^2\,\norm{\nabla \bF(\nabla
      \bu)}_2^2   \notag
    \\
    &\le c\, \bignorm{\bF(\nabla \bu) -\bF (\nabla _h\PiDG\bu )}_2^2 + c\, h^2\, \norm{\nabla \bF(\nabla
      \bu)}_2^2   \notag
    \\
    &\le  c\, h^2\, \norm{\nabla \bF(\nabla
      \bu)}_2^2   \,.
  \end{align*}
  This proves the assertion for $\PiDG$. Since the Scott--Zhang
  interpolation operator has the same properties
  (cf.~\cite{dr-interpol,dkrt-ldg}), the assertion for $\PiSZ$ follows analogously.
\end{proof}
\begin{lemma}\label{lem:e7}
  Let $\AAA$ satisfy Assumption~\ref{ass:1} for a balanced
  {N-func\-tion} ${\phi}$. Moreover, let $\bX \in L^\phi(\Omega)$,
  $\bY \in L^{\phi^*}(\Omega)$, and let $\bu \in W^{1,\phi}(\Omega)$
  satisfy $\bF(\nabla \bu) \in W^{1,2}(\Omega)$. Then,
  for any ${j\in \setN_0} $, it~holds
  \begin{align*}
      \bignorm{\bF(\nabla \bu) -\bF\big(\Uppi^j_h\bX\big)}_2^2 &\le
      c\, h^2 \norm{\nabla \bF(\nabla \bu) }_2^2 + c\,
      \norm{\bF(\nabla \bu) -\bF(\bX)}_2^2\,,
      \\
      \bignorm{\bF^*(\AAA(\nabla \bu))\! -\!\bF^*\big(\Uppi^j_h\AAA(\bY)\big)}_2^2 &\le
      c\, h^2 \norm{\nabla \bF(\nabla \bu) }_2^2 + c\,
      \norm{\bF^*(\AAA(\nabla \bu)) \!-\!\bF^*(\AAA(\bY))}_2^2
  \end{align*}
  with a  constant $c>0$ depending only on the characteristics of ${\AAA}$ and
  $\phi$, and the chunkiness~$\omega_0>0$.
\end{lemma}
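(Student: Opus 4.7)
The plan is to establish both estimates via the same triangle-inequality strategy, modelled on the proof of Lemma \ref{lem:e4}. I treat the first estimate in detail; the second is obtained by the analogous argument with $\phi$, $\bF$ replaced by $\phi^*$, $\bF^*$ (balanced by Lemma \ref{lem:bal}(i)) applied to $\boldsymbol{Z}:=\AAA(\bY)$ in place of $\bX$, and using Proposition \ref{lem:hammer} in its $\bF^*$-form together with Lemma \ref{lem:F-F^*-diff}.

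I decompose
\begin{align*}
\bF(\nabla\bu)-\bF\big(\Uppi^j_h\bX\big) = \big[\bF(\nabla\bu)-\bF\big(\Uppi^j_h\nabla\bu\big)\big] + \big[\bF\big(\Uppi^j_h\nabla\bu\big)-\bF\big(\Uppi^j_h\bX\big)\big]
\end{align*}
and bound each bracket in the squared $L^2$-norm. For the first bracket, \eqref{eq:hammera} converts it element-wise into the shifted modular $\phi_{\abs{\nabla\bu}}(\abs{\nabla\bu-\Uppi^j_h\nabla\bu})$. On each $K\in\mathcal{T}_h$ I change the shift to the element-wise constant $\abs{\mean{\nabla\bu}_K}$ via Lemma \ref{lem:change2}, use the $L^{\phi_a}$-stability \eqref{eq:PiDGLpsistable} of $\Uppi^j_h$ together with the fact that constants are reproduced, and finish with the Poincar\'e-type bound $\int_K \abs{\bF(\nabla\bu)-\bF(\mean{\nabla\bu}_K)}^2\,\textup{d}x \le c\,h^2\int_{S_K}\abs{\nabla\bF(\nabla\bu)}^2\,\textup{d}x$, exactly the mechanism that appears in the proof of Lemma \ref{lem:e4}. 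This yields the desired contribution $c\,h^2\,\norm{\nabla\bF(\nabla\bu)}_2^2$.

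For the second bracket, \eqref{eq:hammera} gives $\abs{\bF(\Uppi^j_h\nabla\bu)-\bF(\Uppi^j_h\bX)}^2 \sim \phi_{\abs{\Uppi^j_h\nabla\bu}}(\abs{\Uppi^j_h(\nabla\bu-\bX)})$. Two successive shift changes (Lemma \ref{lem:change2}) replace $\abs{\Uppi^j_h\nabla\bu}$ first by $\abs{\nabla\bu}$ and then by $\abs{\mean{\nabla\bu}_K}$; the two remainders, of the form $\abs{\bF(\cdot)-\bF(\cdot)}^2$, are absorbed into the first bracket (already controlled) and into $c\,h^2\,\norm{\nabla\bF(\nabla\bu)}_2^2$, respectively. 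With the constant element-wise shift, the $L^{\phi_{\abs{\mean{\nabla\bu}_K}}}$-stability of $\Uppi^j_h$ (legitimate by Lemma \ref{lem:shift}(i)) yields $\int_K \phi_{\abs{\mean{\nabla\bu}_K}}(\abs{\Uppi^j_h(\nabla\bu-\bX)})\,\textup{d}x \le c\int_K \phi_{\abs{\mean{\nabla\bu}_K}}(\abs{\nabla\bu-\bX})\,\textup{d}x$. Reversing the shift back to $\abs{\nabla\bu}$ via Lemma \ref{lem:change2} and invoking \eqref{eq:hammera} once more closes the bound by $c\,\norm{\bF(\nabla\bu)-\bF(\bX)}_2^2+c\,h^2\,\norm{\nabla\bF(\nabla\bu)}_2^2$. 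For the second estimate the same scheme applies verbatim, except that in the final Poincar\'e step the element-wise constant is chosen as $\AAA(\mean{\nabla\bu}_K)$ rather than $\mean{\AAA(\nabla\bu)}_K$, so that \eqref{eq:F-F*3} converts the resulting $\bF^*$-error into the $\bF$-error needed for standard Poincar\'e applied to $\bF(\nabla\bu)\in W^{1,2}(\Omega)$. The main technical obstacle throughout is the careful bookkeeping of the shift-change remainders so that each is absorbed into the correct right-hand-side quantity; everything else is a direct application of the machinery developed in the preliminaries.
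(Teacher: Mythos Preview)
Your proof is correct and follows the same overall strategy as the paper---triangle inequality with an intermediate point, shift changes via Lemma~\ref{lem:change2}, Orlicz-stability \eqref{eq:PiDGLpsistable} of the projection with a constant shift, and Poincar\'e for $\bF(\nabla\bu)\in W^{1,2}(\Omega)$---but the choice of intermediate point differs and this affects the amount of bookkeeping required.

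You insert $\Uppi^j_h\nabla\bu$ (respectively $\Uppi^j_h\AAA(\nabla\bu)$), which for $j\ge 1$ is not piecewise constant; hence you need two shift changes in the second bracket (to a constant shift and back) and must track several remainder terms. The paper instead inserts the piecewise constant $\Pia\nabla\bu$ (respectively $\Pia\AAA(\nabla\bu)$). With this choice the shift $\abs{\Pia\nabla\bu}$ is already constant on each element, so the stability estimate \eqref{eq:PiDGLpsistable} applies directly, and the identity $\Pia\nabla\bu-\Uppi^j_h\bX=\Uppi^j_h(\Pia\nabla\bu-\bX)$ holds immediately. The first bracket is then handled in one line by Lemma~\ref{pro:SZnablaF}, and for the $\bF^*$-estimate the paper invokes the equivalence $\bignorm{\bF(\nabla\bu)-\bF(\Pia\nabla\bu)}_2^2\sim\bignorm{\bF^*(\AAA(\nabla\bu))-\bF^*(\Pia\AAA(\nabla\bu))}_2^2$ from \cite[Lemma~4.4]{dkrt-ldg}. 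Your suggestion to use $\AAA(\mean{\nabla\bu}_K)$ and \eqref{eq:F-F*3} for the second estimate is a valid alternative that avoids this citation. In short: both routes work, the paper's is shorter because the piecewise constant intermediate eliminates the extra shift-change gymnastics.
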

\begin{proof}\let\qed\relax
  Resorting to \Poincare's inequality on each $K\in \mathcal T_h$ and Proposition
  \ref{lem:hammer}, we find that
  \begin{align}
    \bignorm{\bF(\nabla \bu) -\bF\big(\Uppi^j_h\bX\big)}_2^2
    &\le 2\,\bignorm{\bF(\nabla \bu) -\bF\big(\Pia \nabla \bu\big)}_2^2 + 2\,
      \bignorm{\bF\big(\Pia \nabla \bu\big) -\bF\big(\Uppi^j_h\bX\big)}_2^2\notag
    \\
    &\le c\, h^2 \norm{\nabla \bF(\nabla \bu) }_2^2 + c\,
      \smash{\rho_{\phi_{\abs{\Pia \nabla \bu}},\Omega}\big( \Pia \nabla \bfu
      - \Uppi^j_h \bX\big)}\,.\label{eq:e7.1}
  \end{align}
  The $L^\phi$-stability of $\Uppi^k_h$
  in \eqref{eq:PiDGLpsistable}, Proposition~\ref{lem:hammer}, and
  again \Poincare's inequality on each $K\in \mathcal T_h$ yield
  \begin{align}
    \rho_{\phi_{\abs{\Pia \nabla \bu}},\Omega}\big( \Pia \nabla \bfu
    - \Uppi^j_h \bX\big)\label{eq:e7.2}
    &=       \rho_{\phi_{\abs{\Pia \nabla \bu}},\Omega}\big( \Uppi^j_h\big(\Pia \nabla \bfu
      - \bX\big)\big)
    \\[-1mm]
    &\le c\,      \rho_{\phi_{\abs{\Pia \nabla \bu}},\Omega}\big( \Pia \nabla \bfu
      - \bX\big)\notag
    \\[-1mm]
    &\le c\, \bignorm{\bF\big(\Pia\nabla \bu\big) -\bF(\nb\bu) }_2^2 +c\,
      \norm{ \bF(\nabla \bu) -\bF(\bX) }_2^2  \notag
    \\
    &\le c\, h^2 \norm{\nabla \bF(\nabla \bu) }_2^2 +c\, \norm{
      \bF(\nabla \bu) -\bF(\bX) }_2^2 \,.\notag
  \end{align}
  Combining \eqref{eq:e7.1} and \eqref{eq:e7.2}, we conclude the first
  assertion. The second assertion follows analogously, if we
  additionally use Lemma \ref{lem:F-F^*-diff} and (cf.~\cite[Lemma 4.4]{dkrt-ldg})
  \begin{align}
    \bignorm{\bF(\nabla \bu) -\bF\big(\Pia \nabla \bu\big)}_2^2 \sim 
    \bignorm{\bF^*(\AAA(\nabla \bu)) -\bF^*\big(\Pia \AAA(\nabla \bu)\big)}_2^2\,. \tag*{$\square$}
  \end{align}
\end{proof}

\begin{lemma}\label{lem:e5}
	Let $\AAA$ satisfy Assumption~\ref{ass:1} for a balanced
	{N-func\-tion} ${\phi}$ and $k\in \mathbb{N}_0$. Moreover, let  $\bfX\in L^\varphi(\Omega)$ and let $\bu \in W^{1,\phi}(\Omega)$ satisfy
	$\bF(\nabla \bu) \in W^{1,2}(\Omega)$. Then, we have that
	\begin{align}
		\label{eq:e5}
		h\,\rho_{\phi_{\abs{\nabla \bu}},\smash{\Gamma_h^{i}}\cup\Gamma_D}\big(\vert \vert \nabla\bu\vert -\avg{\abs{\Pia \bfX}}\vert\big)
		&\le  c\, h^2\, \norm{\nabla \bF(\nabla  \bu)}_2^2  +c\,\norm{\bF(\nabla \bu)-\bF(\bX)}^2_2
	\end{align}
	with a constant $c>0$ depending only on the
	characteristics of ${\AAA }$ and $\phi$, and the chunkiness
	$\omega_0>0$. 
\end{lemma}

\begin{proof}
	Using $\vert \vert \nabla\bu\vert -\avg{\abs{\Pia \bfX}}\vert\leq\avg{\vert\nabla\bu -\Pia \bfX\vert}$ on $\smash{\Gamma_h^{i}}\cup\Gamma_D$, 
	the convexity of $\phi_{\abs{\nabla \bu}}$, \eqref{eq:hammera} and the trace inequality \eqref{eq:emb}, for every $\gamma\in \smash{\Gamma_h^{i}}\cup\Gamma_D$, we find that
	\begin{align*}
		&h\int_{\gamma}{\phi_{\abs{\nabla \bu}}\big(\vert \vert \nabla\bu\vert -\avg{\abs{\Pia \bfX}}\vert\big)\,\mathrm{d}s}\leq c\,h\int_{\gamma}{\phi_{\abs{\nabla \bu}}\big(\avg{\vert\nabla\bu -\Pia \bfX\vert}\big)\,\mathrm{d}s}\\&\leq 
		 c\,h\sum_{K\in \mathcal{T}_h;K\subseteq S_\gamma}{\int_{\gamma}{\phi_{\abs{\nabla \bu}}\big(\vert\nabla\bu -(\Pia \bfX)|_K\vert\big)	\,\mathrm{d}s}}
		 \\&\leq
		 c\,h\sum_{K\in \mathcal{T}_h;K\subseteq S_\gamma}{\int_{\gamma}{\abs{\bF(\nabla\bu) -\bF((\Pia \bfX)|_K)}^2	\,\mathrm{d}s}} 
		 \\&\leq
		 c\,h^2 \,\int_{S_\gamma}{\abs{\nabla\bF(\nabla\bu)}^2	\,\mathrm{d}x}+c\,\int_{S_\gamma}{\abs{\bF(\nabla\bu) -\bF(\Pia \bfX)}^2	\,\mathrm{d}x}\,. 
	\end{align*}
	Then, the assertion  follows by summing with respect to $\gamma\in \smash{\Gamma_h^{i}}\cup\Gamma_D$ and resorting to Lemma \ref{lem:e7} with $j=0$.
\end{proof}
\begin{theorem}
  \label{thm:error}
  Let $\AAA$ satisfy Assumption~\ref{ass:1} for a balanced
   {N-func\-tion} ${\phi}$.~\mbox{Moreover}, let $\bu \in W^{1,\phi}(\Omega)$ be a solution of \eqref{eq:p-lap} which satisfies
  $\bF(\nabla \bu) \in W^{1,2}(\Omega)$ and let $\bu_h \in \Vhk $ be a
  solution of \eqref{eq:primal} for $\alpha>0$ and $k\in \mathbb{N}$.  Then, we have
  that
  \begin{align*}
    \begin{split}
       \bignorm{\bfF\big(\nablaDG \bfu_h \!+\!\Rhk \bfu_D^*\big) \!-\! \bfF(\nabla
        \bfu)}_2^2 \!+\! \alpha\,m_{\phi_{\aaal},h } (\bfu_h\!-\!\bu)
      \!\le\! c\, h^2 \norm{\nabla \bF(\nabla \bu) }_2^2 
    \end{split}
  \end{align*}
  with a constant $c\!>\!0$ depending only on the characteristics of
  $\AAA$~and~$\phi$, $k \in \setN$, the~chunkiness $\omega_0>0$, and
  $\alpha^{-1}>0$.
\end{theorem}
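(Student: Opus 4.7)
The plan is to test the error identity \eqref{eq:errorprimal} with $\bz_h := \bfu_h - \PiSZ \bfu \in \Vhk$, where $\PiSZ$ is the Scott--Zhang interpolation operator, and then to split each bilinear contribution into a \emph{monotone part} producing the two quantities on the left-hand side of the claim (via Proposition \ref{lem:hammer}) together with \emph{remainder terms} that are absorbed after being bounded via the approximation properties of $\PiSZ$ (Lemma \ref{pro:SZnablaF} and Lemma \ref{lem:e4}).

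For the volume contribution, I would first use \eqref{eq:DGnablaR}, the conformity of $\PiSZ \bfu$ (so that $\nabla_h \PiSZ \bfu = \nabla \PiSZ \bfu$), and the definition $\bfL_h := \nablaDG \bfu_h + \Rhk \bfu_D^*$ to derive the identity
\[
\nablaDG \bz_h = (\bfL_h - \nabla \bfu) + (\nabla \bfu - \nabla \PiSZ \bfu) - \Rhk(\bfu_D^* - \PiSZ \bfu)\,.
\]
Substituting this into the volume part of \eqref{eq:errorprimal} isolates the monotone quantity $\hskp{\AAA(\bfL_h) - \AAA(\nabla \bfu)}{\bfL_h - \nabla \bfu}$, which by \eqref{eq:hammera} is equivalent to $\bignorm{\bfF(\bfL_h) - \bfF(\nabla \bfu)}_2^2$, plus two cross-terms. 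The cross-terms I would control by the Young-type inequality \eqref{ineq:young} applied to $\phi_{\abs{\nabla \bfu}}$ together with \eqref{eq:hammere}, obtaining $\phi_{\abs{\nabla \bfu}}$-modulars of $\nabla \bfu - \nabla \PiSZ \bfu$ and of $\Rhk(\bfu_D^* - \PiSZ \bfu)$, which Lemma \ref{pro:SZnablaF} and Lemma \ref{lem:e4} (both applied to $\PiSZ$) in turn bound by $c\,h^2 \norm{\nabla \bfF(\nabla \bfu)}_2^2$.

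For the stabilization term, I would split
\[
\jump{\bz_h \otimes \bfn} = \jump{(\bfu_h - \bfu_D^*) \otimes \bfn} + \jump{(\bfu_D^* - \PiSZ \bfu) \otimes \bfn}\,,
\]
where on $\Gamma_I$ the second summand vanishes by continuity of $\bfu_D^*$ and $\PiSZ \bfu$. The first piece is nonnegative and, by \eqref{eq:hammerd}, produces exactly $\alpha\,m_{\phi_{\abs{\avg{\Pia \bfL_h}}},h}(\bfu_h - \bfu_D^*)$, which coincides with the quantity $\alpha\,m_{\phi_{\abs{\avg{\Pia \bfL_h}}},h}(\bfu_h - \bfu)$ in the claim because the jumps of $\bfu - \bfu_D^*$ vanish ($\bfu - \bfu_D^* \in W^{1,\phi}_{\Gamma_D}(\Omega)$). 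The remaining cross-term I would control by a boundary Young inequality with $\phi_{\abs{\avg{\Pia \bfL_h}}}$ combined with a shift change (Lemma \ref{lem:change2}) that, up to an absorbable multiple of $\bignorm{\bfF(\bfL_h) - \bfF(\nabla \bfu)}_2^2$, swaps the shift for $\abs{\nabla \bfu}$; the resulting boundary modular of $\bfu_D^* - \PiSZ \bfu$ is then bounded via Lemma \ref{lem:e4}.

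Finally, I would treat the consistency error $\bigskp{\bigavg{\PiDG \bfA} - \avg{\bfA}}{\jump{\bz_h \otimes \bfn}}_{\Gamma_I \cup \Gamma_D}$ (with $\bfA = \AAA(\nabla \bfu)$) by the same jump decomposition and Young's inequality for the conjugate pair $\bigl((\phi_{\abs{\avg{\Pia \bfL_h}}})^*, \phi_{\abs{\avg{\Pia \bfL_h}}}\bigr)$; a further shift change together with Lemma \ref{lem:F-F^*-diff} and \eqref{eq:F-F*3} reduces the task to bounding $\bignorm{\bfF^*(\AAA(\nabla \bfu)) - \bfF^*(\PiDG \AAA(\nabla \bfu))}_2^2$, which the second part of Lemma \ref{lem:e7} (applied with $\bY = \nabla \bfu$) controls by $c\,h^2 \norm{\nabla \bfF(\nabla \bfu)}_2^2$. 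Choosing all Young parameters sufficiently small absorbs the $\bignorm{\bfF(\bfL_h) - \bfF(\nabla \bfu)}_2^2$ and $m_{\phi_{\abs{\avg{\Pia \bfL_h}}}, h}(\bfu_h - \bfu_D^*)$ contributions into the left-hand side, closing the estimate. The hard part will be the bookkeeping of shifts: every Young-type inequality produces a shifted modular whose shift must be realigned, via Lemma \ref{lem:change2} or \eqref{eq:phi_prime_shift}, with either $\abs{\nabla \bfu}$ (so that Lemmas \ref{pro:SZnablaF}, \ref{lem:e4}, and \ref{lem:e7} apply) or with $\abs{\avg{\Pia \bfL_h}}$ (so that the term can be absorbed into the discrete natural energy).
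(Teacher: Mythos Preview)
Your overall plan is sound and closely mirrors the paper's proof: test the error equation \eqref{eq:errorprimal} with the difference between $\bu_h$ and an interpolant of $\bu$, extract the monotone pieces via Proposition~\ref{lem:hammer}, and control the remainders by approximation. The paper uses $\PiDG\bu$ while you use $\PiSZ\bu$; this is a legitimate variant (cf.~Remark~\ref{rem:SZ}), and the conformity of $\PiSZ\bu$ slightly simplifies the stabilization cross-term because $\jump{(\bu_D^*-\PiSZ\bu)\otimes\bn}$ vanishes on $\Gamma_I$.

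There is, however, a genuine gap in your treatment of the consistency error $\bigskp{\avg{\PiDG\bA}-\avg{\bA}}{\jump{\bz_h\otimes\bn}}_{\Gamma_I\cup\Gamma_D}$. After Young's inequality and the shift change you describe, you land on a \emph{boundary} integral of the form $h\int_{\Gamma_I\cup\Gamma_D}(\phi_{\abs{\nabla\bu}})^*\bigl(\bigabs{\bA-\PiDG\bA}\bigr)\,ds$, but you then invoke Lemma~\ref{lem:e7}, which is a \emph{volume} estimate. The passage from boundary to volume here is not automatic: $\bA-\PiDG\bA$ is not polynomial (so \eqref{eq:pol-trace} does not apply), and $\bF^*(\bA)-\bF^*(\PiDG\bA)$ has no obvious $W^{1,2}$-regularity (so \eqref{eq:emb} does not apply directly either). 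The paper closes this step in \eqref{eq:e11} by inserting the elementwise constant $\AAA(\mean{\nabla\bu}_K)$: one piece becomes $\abs{\bF(\nabla\bu)-\bF(\mean{\nabla\bu}_K)}^2$, to which the $W^{1,2}$ trace inequality \eqref{eq:emb} and Poincar\'e apply, while the other piece has a polynomial argument $\PiDG(\AAA(\nabla\bu)-\AAA(\mean{\nabla\bu}_K))$ with a \emph{constant} shift, to which the discrete trace inequality \eqref{eq:pol-trace} and the $L^\phi$-stability of $\PiDG$ apply. The same issue arises for the boundary shift-change residuals you call ``an absorbable multiple of $\norm{\bF(\bL_h)-\bF(\nabla\bu)}_2^2$'' (compare the paper's $K_{3,3}$ in \eqref{eq:e10}); these are boundary integrals first and require the trace-plus-Lemma~\ref{lem:e7} argument of \eqref{eq:e10} before they become volume terms. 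Once you add this intermediate-constant step, your argument goes through.
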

\begin{proof}
  To shorten the notation, we use again $\bL_h$ instead of
  $\nablaDG \bfu_h +\Rhk \bfu_D^*$~in~the~shifts.  Let
  $\bfz_h = \bfu_h - \PiDG\bfu\in \Vhk$, then we have that
  \begin{align*}
    \nablaDG \bfz_h = \big(\nablaDG \bfu_h + \Rhk \bfu_D^* - \nabla \bfu\big) -
                      \big(\nablaDG \PiDG\bfu + \Rhk \bfu_D^* - \nabla \bfu\big)\,,
  \end{align*}
  so the error equation~\eqref{eq:errorprimal} gives us
  \begin{align}
  	\begin{aligned}
    &\bighskp{\AAA(\nablaDG \bfu_h +\Rhk \bfu_D^*) - \AAA(\nabla
      \bfu)}{(\nablaDG \bfu_h + \Rhk \bfu_D^*) - \nabla \bfu}
    \\
    &\quad + \alpha \bigskp{\AAA_{\aaa} (h^{-1} \jump{(\bfu_h -\bu_D^*)\otimes
      \bfn})}{ \jump{(\bfu_h -\bu_D^*) \otimes \bfn}}_{\smash{\Gamma_h^{i}}\cup
      \Gamma_D}
    \\
    &= \bighskp{\AAA(\nablaDG \bfu_h +\Rhk \bfu_D^*) - \AAA(\nabla
      \bfu)}{(\nablaDG \PiDG\bu + \Rhk \bfu_D^*) - \nabla \bfu}
    \\
    &\quad + \alpha \bigskp{\AAA_{\aaa} (h^{-1} \jump{(\bfu_h -\bu_D^*)\otimes
      \bfn})}{ \jump{(\PiDG\bfu -\bu_D^*) \otimes \bfn}}_{\smash{\Gamma_h^{i}}\cup
      \Gamma_D}
    \\
    &\quad +{\bigskp{\bigavg{\PiDG\bA}-\avg{\bA}}{\jump{(\bu_h-\PiDG\bu)\otimes
      \bn}}_{\smash{\Gamma_h^{i}}\cup \Gamma_D}}
    \\
    &=:K_1 +\alpha\, K_2+K_3\,. 
\end{aligned}\label{eq:e0}
  \end{align}
  Due to Proposition \ref{lem:hammer}, the first term on the
  left-hand side of \eqref{eq:e0} is equivalent~to
  \begin{align}
    \smash{\rho_{\phi,\Omega}\big({\nablaDG \bfu_h + \Rhk \bfu_D^*-\nabla \bu}\big)
    \sim \bignorm{\bfF\big(\nablaDG \bfu_h +\Rhk \bfu_D^*\big) - \bfF(\nabla
    \bfu)}_2^2\,,}\label{eq:e0.1}
  \end{align}
  while \eqref{eq:equi2} yields that the second term
  on the left-hand side~of~\eqref{eq:e0}  is equivalent~to
  \begin{align}
    \smash{\alpha\,m_{\phi_{\aaa},h } (\bfu_h-\bu_D^*) \,.}\label{eq:e0.2}
  \end{align}
  It is important to use that $\jump{\bu_D^*\otimes \bn} =\jump{\bu\otimes \bn}$ on
  $\smash{\Gamma_h^{i}}\cup \Gamma_D$, which allows to replace~in \eqref{eq:e0.2} $\bu_h-\bu_D^*$ by
  $\bu_h- \bu$.  Thus, \eqref{eq:e0.1} and \eqref{eq:e0.2} yield that the left-hand side of
\eqref{eq:e0} is bounded from~below~by
  \begin{align}\label{eq:lhs}
    c\, \bignorm{\bfF\big(\nablaDG \bfu_h +\Rhk \bfu_D^*\big) - \bfF(\nabla  \bfu)}_2^2
    + \alpha\,c\,m_{\phi_{\aaa},h } (\bfu_h-\bu)\,.
  \end{align}
  To treat the term $K_1$, we exploit that, owing to $\Rhk\bu =\Rhk \bu_D$, it holds
  \begin{align}
    \nablaDG \PiDG \bfu + \Rhk \bfu_D^* -\nabla \bu \!=\! (\nabla_h
    \PiDG \bfu -\nabla \bu )+ \Rhk(\bfu- \PiDG\bfu)\,.\label{eq:fe}   
  \end{align}
  Thus,
  Proposition~\ref{lem:hammer}, the $\varepsilon$-Young inequality \eqref{ineq:young}
  with $\psi=\phi_{\abs{\nabla \bu}}$,  Lemma~\ref{pro:SZnablaF},
  and Lemma \ref{lem:e4} yield that for all $\vep >0$, there exists
  $c_\vep>0$ such that
  \begin{align}\label{eq:e2}
    \begin{aligned}
      \abs{K_1} &\le \vep \,\rho_{\phi_{\abs{\nabla
            \bu}},\Omega}\big({\nablaDG \bfu_h + \Rhk \bfu_D^*-\nabla
        \bu}\big) +c_\vep \, \rho_{\phi_{\abs{\nabla
            \bu}},\Omega}\big({\nabla_h \PiDG\bfu -\nabla \bu}\big)
      \\
      &\quad +c_\vep \, \rho_{\phi_{\abs{\nabla \bu}},\Omega}\big({
        \Rhk (\bfu-\PiDG \bu)}\big) 
      \\[-0.5mm]
      &\le \vep \, c\, \bignorm{\bfF\big(\nablaDG \bfu_h +\Rhk \bfu_D^*\big) -
        \bfF(\nabla \bfu)}_2^2 + c_\vep\, \bignorm{\bfF\big(\nabla_h
        \PiDG\bfu \big) - \bfF(\nabla \bfu)}_2^2
      \\
      &\quad +c_\vep \, \rho_{\phi_{\abs{\nabla \bu}},\Omega}\big({
        \Rhk (\bfu-\PiDG \bu)}\big) 
      \\[-0.5mm]
      &\le \vep \, c\, \bignorm{\bfF\big(\nablaDG \bfu_h +\Rhk \bfu_D^*\big) -
        \bfF(\nabla  \bfu)}_2^2 + c_\vep \, h^2\, \norm{\nabla \bF(\nabla
        \bu)}_2^2  \,.
    \end{aligned}       
  \end{align}
  The identity $\jump{\bu_D^*\otimes \bn}=\jump{\bu\otimes \bn}$ on $\smash{\Gamma_h^{i}}\cup\Gamma_D$,
  \eqref{eq:flux}, the $\varepsilon$-Young inequality~\eqref{ineq:young}~with $\psi=\phi_{\aaa}$, a shift change in Lemma \ref{lem:change2},  Corollary \ref{cor:PiDGapproxmspecial}~with~$\bfw_h=\PiDG\bfu -\bu$, Lemma \ref{lem:e5}, \eqref{eq:hammera} and Lemma \ref{pro:SZnablaF}  yield
  \begin{align}\label{eq:e6}
    \begin{aligned}
      \abs{K_2} &= \bigabs{\bigskp{\AAA_{\aaa} (h^{-1} \jump{(\bfu_h
            -\bu)\otimes \bfn})}{ \jump{(\PiDG\bfu -\bu) \otimes
            \bfn}}_{\smash{\Gamma_h^{i}}\cup \Gamma_D}}
      \\
      &\le \vep\, \smash{m_{\phi_{\aaa},h}(\bfu_h -\bu)}
      +c_\vep \, \smash{m_{\phi_{\aaa},h}\big(\PiDG\bfu -\bu\big)}
        \\
      &\le \vep\, \smash{m_{\phi_{\aaa},h}(\bfu_h -\bu)}
      +c_\kappa\,c_\vep \, \smash{m_{\phi_{\abs{\nabla\bfu}},h}\big(\PiDG\bfu -\bu\big)}
      \\
      &\quad+\kappa\, c_\vep \, h\,{\rho _{\phi_{\abs{\nabla\bfu}},\smash{\Gamma_h^{i}}\cup\Gamma_D}\big(\abs{\abs{\nabla\bfu}-\aaal}\big)}
       \\
      &\le \vep\, \smash{m_{\phi_{\aaa},h}(\bfu_h -\bu)}+c_\kappa\,c_\vep \,
      \bignorm{\bF(\nabla \bu) -\bF\big(\nabla \PiDG\bfu \big)}_2^2
      \\
      &\quad+
      c_\kappa\, c_\vep \,  h^2 \norm{\nabla \bF(\nabla \bu) }_2^2 + \kappa\,c_\vep \, 
      \bignorm{\bF(\nabla \bu) -\bF\big(\Lh\big)}_2^2\,.
      \\
      &\le \vep\, \smash{m_{\phi_{\aaa},h}(\bfu_h -\bu)} +
      c_\kappa\,c_\vep \, h^2 \norm{\nabla \bF(\nabla \bu) }_2^2 
      \\
      &\quad+ \kappa\,c_\vep \, 
      \bignorm{\bF(\nabla \bu) -\bF\big(\Lh\big)}_2^2\,.
    \end{aligned}       
  \end{align} 
  To treat $K_3$, we use the $\varepsilon$-Young inequality~\eqref{ineq:young} for
  $\psi =\phi_{\abs{\nabla \bu}}$ 
  to get\footnote{We
    use $\bF(\nb\bu)\in W^{1,2}(\Omega)$, the embedding
    $W^{1,2}(\Omega)\vnor L^2(\smash{\Gamma_h^{i}}\cup\Gamma_D)$ and
    \eqref{eq:hammerg} to conclude that
    $\nb \bu \in L^\phi(\smash{\Gamma_h^{i}}\cup\Gamma_D)$ and
    $ \bA \in L^{\phi^*}(\smash{\Gamma_h^{i}}\cup\Gamma_D)$ and, thus, have
    well-defined traces on $\smash{\Gamma_h^{i}}\cup\Gamma_D$.\vspace*{-1cm}}
  \begin{align}
    \label{eq:e9}
    \abs{K_3}
    &\le c_\vep\,h\,\int_{\smash{\Gamma_h^{i}}\cup \Gamma_D} (\phi_{\abs{\nabla
      \bu}})^* \big (\big|\avg{\bA} -\bigavg{\PiDG\bA}\big|\big ) \, \textrm{d}s
      +\vep \,m_{\phi_{\abs{\nabla \bu}},h}\big (\bfu_h -\PiDG \bu\big ) 
    \\
    &=: c_\vep \, \sum_{\gamma \in \smash{\Gamma_h^{i}}\cup \Gamma_D} K^\gamma_{3,1} +\vep\, K_{3,2}\,. \notag 
  \end{align}
	Using a~shift~change~in~Lemma~\ref{lem:change2},  Corollary
        \ref{cor:PiDGapproxmspecial} with $\bfw_h=\bfu -\PiDG \bu$,
        Proposition~\ref{lem:hammer}, Lemma \ref{pro:SZnablaF} and Lemma \ref{lem:e5}, we find that
  \begin{align}
    \label{eq:e10}
    \begin{aligned}
      \abs{K_{3,2}}  &\le   c\, m_{\phi_{\abs{\nabla \bu}},h} \big(\bfu
      -\PiDG \bu\big)+c\,m_{\phi_{\abs{\nabla \bu}},h} \big(\bfu_h - \bu\big)
      \\
      &\le   c\, m_{\phi_{\abs{\nabla \bu}},h} \big(\bfu -\PiDG
      \bu\big)+c\,m_{\phi_{\aaa},h} \big(\bfu_h - \bu\big)
      \\[-1mm]
      &\quad +c\, h
      \,\rho _{\phi_{\abs{\nabla\bfu}},\smash{\Gamma_h^{i}}\cup\Gamma_D}\big(\abs{\abs{\nabla\bfu}-\aaal}\big)
      \\
      &\leq c\, h^2 \norm{\nabla \bF(\nabla \bu) }_2^2 + c\,
      \bignorm{\bF(\nabla \bu) -\bF\big(\Lh\big)}_2^2\,.
    \end{aligned}
  \end{align}
  From 
  Proposition \ref{lem:hammer}, Lemma \ref{lem:F-F^*-diff},
  choosing some $K\in \mathcal T_h$
  such~that~${\gamma \subseteq \pa K}$, using $\nb \bu, \bA \in W^{1,1}(\Omega)$, the trace inequality
  \eqref{eq:pol-trace}, \Poincare's inequality on $K$, the
  stability~of~$\PiDG$~in \eqref{eq:PiDGLpsistable}, \eqref{eq:F-F*2},
  \eqref{eq:hammera}, and again \Poincare's inequality on $K$, it~follows~that
  \begin{align}
    \begin{aligned}
    \abs{K_{3,1}^\gamma}
    &\le c\, h\int_{\gamma} \bigabs{\bF^*
      ({\AAA(\nabla \bu)} ) -\bF^*\big({\PiDG \AAA(\nabla \bu) }\big) }^2\,
      \textrm{d}s
    \\[-0.5mm]
    &\le c\, h\int_{\gamma} \bigabs{\bF^*
      (\AAA(\nabla \bu) ) -\bF^*\big(\AAA(\mean{\nabla \bu}_K)\big) }^2 \, \textrm{d}s 
    \\[-1mm]
    &\quad +c\, h\int_{\gamma} \bigabs{\bF^*
      (\AAA(\mean{\nabla \bu}_K)) -\bF^*\big(\PiDG \AAA(\nabla \bu)\big) }^2\, \textrm{d}s 
    \\[-0.5mm]
    &\le c\, h\int_{\gamma} \bigabs{\bF(\nabla \bu )
      -\bF(\mean{\nabla \bu}_K) }^2 \, \textrm{d}s
    \\[-1mm]
    &\quad +c\, h\int_{\gamma} 
      (\phi^*)_{\abs{\AAA(\mean{\nabla \bu}_K)}} \big (\PiDG (\AAA
      (\nabla \bu)-\AAA(\mean{\nabla \bu}_K))\big ) \, \textrm{d}s 
    \\[-0.5mm]
    &\le c\int_{K} \bigabs{\bF(\nabla \bu )
      -\bF(\mean{\nabla \bu}_K) }^2 +h^2\, \abs{\nabla \bF(\nabla
      \bu ) }^2 \, \textrm{d}x
    \\[-1mm]
    &\quad + c\int_K  (\phi^*)_{\abs{\AAA(\mean{\nabla \bu}_K)}} \big (\PiDG (\AAA
      (\nabla \bu)-\AAA(\mean{\nabla \bu}_K))\big )\, \textrm{d}x
    \\[-0.5mm]
    &\le c\int_{K} h^2\, \abs{\nabla \bF(\nabla
      \bu ) }^2 \, \textrm{d}x + c\int_K  (\phi^*)_{\abs{\AAA(\mean{\nabla \bu}_K)}} \big (\AAA
      (\nabla \bu)-\AAA(\mean{\nabla \bu}_K)\big )\, \textrm{d}x
     \\[-0.5mm]
    &\le c\int_{K} h^2\, \abs{\nabla \bF(\nabla
      \bu ) }^2 \, \textrm{d}x+c\int_{K} \abs{\bF(\nabla \bu )
      -\bF(\mean{\nabla \bu}_K) }^2 \, \textrm{d}x 
    \\[-0.5mm]
    &\le c\int_{K} h^2\, \abs{\nabla \bF(\nabla
      \bu ) }^2 \, \textrm{d}x\,.
   \end{aligned}\hspace*{-7mm} \label{eq:e11}
  \end{align}
  Eventually \eqref{eq:e9}--\eqref{eq:e11} imply that 
  \begin{align}
    \abs{K_3} & \le c_\vep\,  h^2 \norm{\nabla \bF(\nabla \bu) }_2^2 + \vep\,c\,
		\bignorm{\bF(\nabla \bu) -\bF\big(\Lh\big)}_2^2\,. 
  \end{align}
  All together, choosing first $\vep>0$ and than $\kappa>0$
  small enough, and absorbing the terms
  with $\vep $ and $\kappa$ in the left-hand side, we conclude the assertion.
\end{proof}
\enlargethispage{3mm}
\begin{corollary}
  \label{cor:error}
  Let $\AAA$ satisfy Assumption~\ref{ass:1} for a balanced
  {N-func\-tion}~${\phi}$.~Moreover, let $\bu \in W^{1,\phi}(\Omega)$ be a solution of
  \eqref{eq:p-lap} which satisfies
  $\bF(\nabla \bu) \in W^{1,2}(\Omega)$ and let
  $(\bu_h,\bL_h,\bA_h)^\top \in \Vhk \times \Xhk \times \Xhk$ be a solution
  of \eqref{eq:DG} for $\alpha>0$, $h>0$ and $k\in \mathbb{N}$. Then,~it~holds
  \begin{align}
    \begin{split}
      \norm{\bfF^*(\AAA(\nabla \bfu)) - \bfF^*(\bA_h)}_2^2 \le c\,
      h^2\, \norm{\nabla \bF(\nabla \bu) }_2^2 \,,
    \end{split}\label{eq:ba1}
  \end{align}
  with a constant $c>0$ depending only on the characteristics of
  $\AAA$~and~$\phi$, $k \in \setN$, the chunkiness $\omega_0>0$, and
  $\alpha^{-1}>0$.
\end{corollary}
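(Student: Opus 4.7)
The plan is to reduce the claim about $\bF^*(\AAA(\nabla\bu))-\bF^*(\bA_h)$ to the already established estimate of Theorem~\ref{thm:error} by exploiting two ingredients: (i) the identity $\bA_h=\PiDG\AAA(\bL_h)$ with $\bL_h=\nablaDG\bu_h+\Rhk\bu_D^*$ from \eqref{eq:Lh}--\eqref{eq:Ah}, and (ii) the equivalence \eqref{eq:F-F*3} from Lemma~\ref{lem:F-F^*-diff}, which lets us pass between $\bF^*\circ\AAA$--differences and $\bF$--differences without loss.

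First, I would apply Lemma~\ref{lem:e7} with $\bY=\bL_h$ and $j=k$. Since $\Uppi_h^k\AAA(\bL_h)=\bA_h$, this yields
\begin{align*}
\norm{\bF^*(\AAA(\nabla\bu))-\bF^*(\bA_h)}_2^2
&\le c\,h^2\norm{\nabla\bF(\nabla\bu)}_2^2
+c\,\norm{\bF^*(\AAA(\nabla\bu))-\bF^*(\AAA(\bL_h))}_2^2.
\end{align*}
Next, I would use \eqref{eq:F-F*3} from Lemma~\ref{lem:F-F^*-diff} pointwise to estimate
\begin{align*}
\norm{\bF^*(\AAA(\nabla\bu))-\bF^*(\AAA(\bL_h))}_2^2
\sim \norm{\bF(\nabla\bu)-\bF(\bL_h)}_2^2
= \bignorm{\bF(\nabla\bu)-\bF\big(\nablaDG\bu_h+\Rhk\bu_D^*\big)}_2^2.
\end{align*}
Finally, Theorem~\ref{thm:error} bounds the right-hand side by $c\,h^2\norm{\nabla\bF(\nabla\bu)}_2^2$, and combining the three inequalities proves the corollary.

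There is no real obstacle here: the only non-trivial steps are (a) ensuring $\bY=\bL_h$ is admissible in Lemma~\ref{lem:e7} (this follows from Corollary~\ref{cor:stab} giving $\bL_h\in L^\phi(\Omega)$, hence $\AAA(\bL_h)\in L^{\phi^*}(\Omega)$ by \eqref{eq:hammere2} and \eqref{eq:phi*phi'}), and (b) checking that the constants in Lemma~\ref{lem:F-F^*-diff} depend only on the characteristics of $\AAA$ and $\phi$, which is explicitly asserted in that lemma. The dependence on $\alpha^{-1}$ in the final constant is inherited solely from Theorem~\ref{thm:error}.
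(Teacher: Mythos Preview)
Your proposal is correct and follows exactly the same route as the paper: apply Lemma~\ref{lem:e7} with $\bY=\bL_h$ and $j=k$ (using $\bA_h=\PiDG\AAA(\bL_h)$ from \eqref{eq:Ah}), then invoke \eqref{eq:F-F*3} and Theorem~\ref{thm:error}. One tiny quibble: the hypothesis of Lemma~\ref{lem:e7} requires $\bY\in L^{\phi^*}(\Omega)$ rather than $\AAA(\bY)\in L^{\phi^*}(\Omega)$, but this is immediate since $\bL_h\in\Xhk$ is piecewise polynomial.
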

\begin{proof} 
  Lemma \ref{lem:e7} with
  $\bY=\bL_h$, $j=k$, $\bA_h = \PiDG \AAA(\bL_h)$, \eqref{eq:F-F*3},
  \eqref{eq:Lh} imply that
  \begin{align*}
    \norm{\bfF^*(\AAA(\nabla \bfu)) - \bfF^*(\bA_h)}_2^2
    &\le c\, \norm{\bfF^*(\AAA(\nabla \bfu)) - \bfF^*(\AAA(\bL_h))}_2^2 
    \\&\quad+ c\,  h^2\, \norm{\nabla \bF(\nabla \bu) }_2^2
    \\
    &\le c\, \bignorm{\bfF(\nabla \bfu) - \bfF\big(\Lh\big)}_2^2 
    \\&\quad+ c\,  h^2\, \norm{\nabla \bF(\nabla \bu) }_2^2\,,
  \end{align*}
  which together with Theorem \ref{thm:error} yields the assertion.
  Alternatively, we~could~use \mbox{\cite[Proposition 4.9]{dkrt-ldg}} and Theorem \ref{thm:error}.
\end{proof}

\begin{remark}\label{rem:dis}
  Let us compare our results in the special case that $\phi$ possesses
  $(p,\delta)$-structure and $k=1$ with the corresponding ones in \cite{dkrt-ldg}.
  \begin{itemize}[noitemsep,topsep=1pt,leftmargin=\widthof{(iiiv)},labelwidth=\widthof{(iii)}] 
  \item [(i)] \cite[Theorem 4.8, Corollary 4.10 (ii)]{dkrt-ldg} provides
    sub-optimal convergence rates for $p\le 2$ as well as for $p\ge 2$, while
    Theorem \ref{thm:error} and Corollary \ref{cor:error} prove
    optimal ones for all $p\in (1,\infty)$. We emphasize that in
    \cite{dkrt-ldg}, the cases $p\le 2$ and $p\ge 2$ are treated
    differently, while our approach provides a unified treatment. The
    reason for these differences are the different fluxes $\widehat \bA$. Due to the shift in~our~new~flux, it fits perfectly with the structure of the problem
    (cf.~Proposition~\ref{lem:hammer} and treatment of the term $K_3$
    in the proof of Theorem \ref{thm:error}). This is in analogy to
    the gradient shift in the natural distance (cf.~Remark
    \ref{rem:natural_dist}).
  \item [(ii)] \cite[Theorem 4.3, Theorem 4.5, Corollary 4.10
    (i)]{dkrt-ldg} treat the case
    $\bu_D^*=\PiSZ \bu\in \Vhk\cap W^{1,p}(\Omega)$, where
    $\bu\in W^{1,p}(\Omega)$ is the solution of \eqref{eq:p-lap}. In
    this case, an optimal convergence rate is proved for $p\le 2$, while
    for $p\ge 2$ only sub-optimal results are provided. Inasmuch as the solution
    $\bu$ is a priori unknown~and,~in~general,
    $\PiSZ \bu \neq \bu =\bu_D$ on $\Gamma_D$, these results are of
    rather theoretical interest than of interest from the practical
    computational point of view.
  \item [(iii)]   The assertions of Theorem \ref{thm:error} and Corollary
  \ref{cor:error} also hold if we replace the extension $\bu_D^*\in W^{1,\phi}(\Omega)$ of
  the boundary datum $\bu_D$ by the approximation $\PiSZ
  \bu\in \Vhk\cap W^{1,\phi}(\Omega)$ of the solution $\bu\in W^{1,\phi}(\Omega)$, i.e., we define ${\bu_D^* \vcentcolon=\PiSZ \bu}$. 
  In this case, we obtain
  $$
    \bu_h-\bu_D^*= (\bu_h-\bu) +\big(\bu-\PiSZ\bu\big)\quad\text{ in }\Vhk\,.
  $$
  Thus, we can, once more, replace $\bu_h-\bu_D^*\in \Vhk$ by $\bu_h-
  \bu\in W^{1,\phi}(\Omega)$ in the modular 
  $\smash{m_{\phi_{\aaa},h } (\bfu_h-\bu_D^*)}$ if we add on the right-hand
  side~of~\eqref{eq:e0}~the~term
  \begin{align}
    \label{eq:e1}
  \alpha\,c\, m_{\phi_{\aaa},h } \big(\bfu-\PiSZ\bu\big)\,.
  \end{align}
Similarly, we get instead of \eqref{eq:fe} 
  \begin{align*}
  	\nablaDG \PiDG \bfu + \Rhk \bfu_D^* -\nabla \bu
    &= (\nabla_h \PiDG \bfu
      -\nabla \bu )+ \Rhk(\bfu- \PiDG\bfu)\\&\quad + \Rhk\big(\PiSZ\bfu- \bfu\big),
  \end{align*}
  which amounts in an additional term
  \begin{align}\label{eq:e1a}
    \rho_{\phi_{\abs{\nb \bu}}}\big (\Rhk (\PiSZ\bfu- \bfu)\big)\,.
  \end{align}
  Consequently, in the case
  $\bu_D^* =\PiSZ \bu\in\Vhk\cap W^{1,\phi}(\Omega)$, we have to treat
  additionally the terms in \eqref{eq:e1} and
  \eqref{eq:e1a}. \hspace{-0.2em}However, the Scott--Zhang
  interpolation operator~$\PiSZ$~has~\mbox{similar} properties as the
  projection operator $\PiDG$. Thus, we can handle these terms in the
  same way as the corresponding terms in the proof of Theorem
  \ref{thm:error}. Thus, our approach also gives for ${\bu_D^* =\PiSZ \bu}$ optimal
  convergences rates for all balanced N-functions $\phi$.
  \end{itemize}
\end{remark}

\section{Numerical experiments}
\label{sec:experiments}

In this section, we apply the LDG scheme \eqref{eq:DG} (or \eqref{eq:primal}), described above, to solve
numerically the system \eqref{eq:p-lap} with balanced Orlicz-structure with the nonlinear operator $\AAA:\setR^{d\times d}\to\setR^{d\times d}$, for every $\bP\in\setR^{d\times d}$ defined by 
\begin{align*}
	\AAA(\bP) \vcentcolon=(\delta+\vert \bP\vert)^{p-2}\ln(1+\delta+\vert \bP\vert)\bP\,,
\end{align*}
where $\delta\vcentcolon=1\textrm{e}{-}3$ and $ p\in (1,\infty)$, i.e., the operator $\bfF:\setR^{d\times d}\to\setR^{d\times d}$ for every $\bP\in\setR^{d\times d}$ is defined by  
\begin{align*}
	\bfF(\bP) \vcentcolon=(\delta+\vert \bP\vert)^{\frac{p-2}{2}}\sqrt{\ln(1+\delta+\vert \bP\vert)}\,\bP\,.
\end{align*}
We approximate the discrete solution~${\bu_h\in U^k_h}$ of the nonlinear problem \eqref{eq:DG} deploying the Newton line search algorithm of \textsf{PETSc} (version 3.16.1),~cf.~\cite{PETSc19}, with an absolute tolerance of $\tau_{abs}=1\textrm{e}{-}8$ and a relative tolerance of $\tau_{rel}=1\textrm{e}{-}10$.~The~linear system emerging in each Newton step is solved deploying \textsf{PETSc}'s preconditioned biconjgate gradient stabilized method (BCGSTAB) with an~incomplete~LU~factorizaton. For~the numerical flux \eqref{def:flux-A}, we choose the parameter $\alpha>0$ according to Table
\ref{tb0} as a function of~${p\in (1,\infty)}$. This choice is in accordance~with~the~choice~in~\mbox{\cite[Table~1]{dkrt-ldg}}.

\begin{table}[H]
	\centering 
	\begin{tabular}{|c| |c|c|c|c|c|c|c|c|c|c|} 
		\hline 
		&\multicolumn{10}{|c|}{$p$ } \\ [0.5ex] \hline 
		--         & 1.25 &4/3& 1.5 & 5/3 &1.8 &  2  &2.25&  2.5 & 3   &  4   \\ \hline\hline
		$\alpha$  & 0.06 &0.1 &0.2  &0.5 &1 &    2  &2   &  2.5 & 2.5 &  2.5    \\ \hline
	\end{tabular}
	
	\caption{Choice of the stabilization parameter
		$\alpha>0$.\vspace*{-0.5cm}
	}
	\label{tb0}
\end{table}

All experiments were carried out using the finite element software package~\mbox{\textsf{FEniCS}} (version 2019.1.0), cf.~\cite{LW10}. All graphics are generated using the \textsf{Matplotlib} library (version 3.5.1), cf.~\cite{Hun07}.

For our numerical experiments, we choose $\Omega= (-2,2)^2$, $\Gamma_D=\pa\Omega$, ${\Gamma_N=\emptyset}$,~and linear elements, i.e., $k\hspace*{-0.1em}=\hspace*{-0.1em}1$. For $\beta\hspace*{-0.1em}=\hspace*{-0.1em}0.01$, we choose $\smash{\bfg\!\in\! L^{p'}(\Omega)}$ and $\smash{\bu_D\!\in\! W^{1-\frac{1}{p},p}(\Gamma_D)}$ such that $\bu\in W^{1,p}_{\Gamma_D}(\Omega)$, for every $x\vcentcolon=(x_1,x_2)^\top\in \Omega$ defined by
\begin{align}
	\bu(x)\vcentcolon=\vert x\vert^\beta(x_2,-x_1)^\top,
\end{align}
is a solution of  \eqref{eq:p-lap} and satisfies $\bF(\nb\bu)\!\in\! W^{1,2}(\Omega)$. \!We construct~a~\mbox{starting}~\mbox{triangula}-tion $\mathcal
T_{h_0}$ with $h_0\!=\!1$ by subdividing a rectangular cartesian~grid~into~regular~triangles with different orientations.  Finer triangulations $\mathcal T_{h_i}$, $i=1,\dots,5$, with $h_{i+1}=\frac{h_i}{2}$ for all $i=1,\dots,5$, are 
obtained by
regular subdivision of the previous~grid:~Each~\mbox{triangle} is subdivided
into four equal triangles~by~connecting~the~midpoints~of~the~edges. 

Then, for the resulting series of triangulations $\mathcal T_{h_i}$, $i\!=\!1,\dots,5$, we apply~the~above Newton scheme to compute the corresponding numerical solutions $(\bu_i,\bL_i,\bA_i)^\top\vcentcolon=\smash{(\bu_{h_i},\bL_{h_i},\bA_{h_i})^\top\in U_{h_i}^k\times X_{h_i}^k\times X_{h_i}^k}$, $i=1,\dots,5$, 
and the error quantities
\begin{align*}
	\left.\begin{aligned}
		e_{\bL,i}&\vcentcolon=\|\bF(\bL_i)-\bF(\bL)\|_2\,,\\
		e_{\jump{},i}&\vcentcolon=m_{\phi_{\avg{\abs{\bL_i}}},h_i}(\bu_i-\bu)^{\smash{\frac{1}{2}}}\,,
	\end{aligned}\quad\right\}\quad i=1,\dots,5\,.
\end{align*}
As estimation of the convergence rates,  the experimental order of convergence~(EOC)
\begin{align*}
	\texttt{EOC}_i(e_i)\vcentcolon=\frac{\log(e_i/e_{i-1})}{\log(h_i/h_{i-1})}, \quad i=1,\dots,5\,,
\end{align*}
where for every $i=1,\dots,5$, we denote by $e_i$ either  
$e_{\bL,i}$
or $e_{\jump{},i}$,~\mbox{resp.}, is recorded.
For different values of $p\in \{1.25, 4/3, 1.5, 5/3, 1.8, 2, 2.25, 2.5, 3, 4\}$~and~for a series of triangulations $\mathcal{T}_{h_i}$, $i=1,\dots,5$,
obtained by regular, global refinement as described above with
$h_0=1$, the EOC is computed
and presented in 
Table~\ref{tab1} 
and
Table~\ref{tab3}, resp.  In each case, we observe a convergence~ratio~of about $\texttt{EOC}_i(e_i)\approx 1$, $i=1,\dots, 5$, as predicted by 
Theorem~\ref{thm:error}.

\begin{table}[H]
	\centering 
	\begin{tabular}{|c| |c|c|c|c|c|c|c|c|c|c|} 
		\hline 
		&\multicolumn{10}{|c|}{$p$} \\ [0.5ex] \hline 
		$\frac{h_0}{2^i}$    & 1.25 &4/3  & 1.5 & 5/3 &1.8   &  2   & 2.25 &  2.5  &  3       &  4   \\ \hline\hline
		$i=1$                 			& 0.90 &0.90  &0.90 &0.90 &0.88  & 0.85    & 0.86  &0.85    &  0.87 &  0.90  \\ \hline
		$i=2$                		   &0.94 &0.94  &0.94 &0.93 &0.91  & 0.88      &0.89  &0.89    &  0.90    & 0.93   \\ \hline
		$i=3$                  			& 0.95&0.95 & 0.94 &0.94 &0.93  & 0.91     &0.91  &0.91   & 0.92    & 0.94 \\ \hline
		$i=4  $               			& 0.95 &0.95 &0.95 & 0.95 &  0.93  & 0.92&0.92  &0.92  &0.93  &  0.94\\ \hline
		$i=5$               			& 0.96  & 0.96 & 0.96 & 0.95  &  0.94& 0.93&  0.93&  0.94 & 0.94  & 0.95  \\ \hline 
	\end{tabular}
	\caption{Experimental order of convergence: $\texttt{EOC}_i(e_{\bL,i})$,~${i=1,\dots,5}$, 
		in the~case~of $k=1$, $\Gamma_D=\partial
		\Omega$, $\bu_D^*=\bu\in W^{1,p}_{\Gamma_D}(\Omega)$
		and $\mathbf{F}(\nabla\mathbf{u})\in
		{W}^{1,2}(\Omega)$.\vspace*{-0.5cm}} 
	\label{tab1}
\end{table}
\begin{table}[h]
	\centering 
	\begin{tabular}{|c| |c|c|c|c|c|c|c|c|c|c|} 
		\hline 
		&\multicolumn{10}{|c|}{$p$ } \\ [0.5ex] \hline 
		$\frac{h_0}{2^m}$   & 1.25 & 4/3 & 1.5  &5/3    &   1.8 &  2  & 2.25 & 2.5 & 3      &  4    \\ \hline\hline
		$i=1$              &1.12  & 1.10 & 1.08 &1.05   & 1.04  & 1.03& 1.03 &1.03 & 1.03  & 1.04   \\ \hline
		$i=2$              &1.04  & 1.03 & 1.03 &1.03   & 1.03   & 1.03& 1.03 &1.03 & 1.03   & 1.04  \\ \hline
		$i=3$               &1.02  & 1.02 & 1.02 &1.02   & 1.02   & 1.02& 1.02 & 1.02 & 1.03   & 1.03 \\ \hline
		$i=4$             &1.01  & 1.01 & 1.012&1.01  & 1.02   & 1.02& 1.02 &1.02 & 1.02  & 1.03  \\ \hline 
		$i=5$               	& 1.01  & 1.01 & 1.01 & 1.01  &  1.01& 1.01 &  1.01&  1.02 & 1.02  & 1.02  \\ \hline 
	\end{tabular}
	\caption{Experimental order of convergence: $\texttt{EOC}_i(e_{\jump{},i})$, $i=1,\dots,5$, 
		in the~case~of $k=1$, $\Gamma_D=\partial
		\Omega$, $\bu_D^*=\bu\in W^{1,p}_{\Gamma_D}(\Omega)$ and $\mathbf{F}(\nabla\mathbf{u})\in {W}^{1,2}(\Omega)$.\vspace*{-0.5cm}}
	\label{tab3}
\end{table}

In the Figures \ref{fig:plot_uh}--\ref{fig:plot_Rh}, for $p=4$ and
$\alpha=2.5$, we display $\Uppi_{h_4}^1(\vert \bu_4\vert )$,
$\Uppi_{h_4}^1(\vert \bu_4 -\bu\vert )$,
$\Uppi_{h_4}^1(\vert \bL_4\vert )$,
$\Uppi_{h_4}^1(\vert \bL_4 -\bL\vert
)$, and $\Uppi_{h_4}^1(\vert \bR_{h_4}^1(\bu_4 -\bu)\vert
)$. In it, we clearly observe~that the major proportion of the error
is located near the singularity of the exact solution $\bu\in
W^{1,4}(\Omega)$.  For all other considered~\mbox{values}~${p\!\in\!
  \{1.25, 4/3, 1.5, 5/3, 1.8, 2,2.25, 2.5, 3,
  4\}}$ with associated
$\alpha>0$, according~to~Table~\ref{tb0},
one~get~similar~\mbox{pictures}.\vspace*{-0.5cm}

\begin{figure}[H]
	\centering
	\hspace*{-0.25cm}\includegraphics[width=6cm]{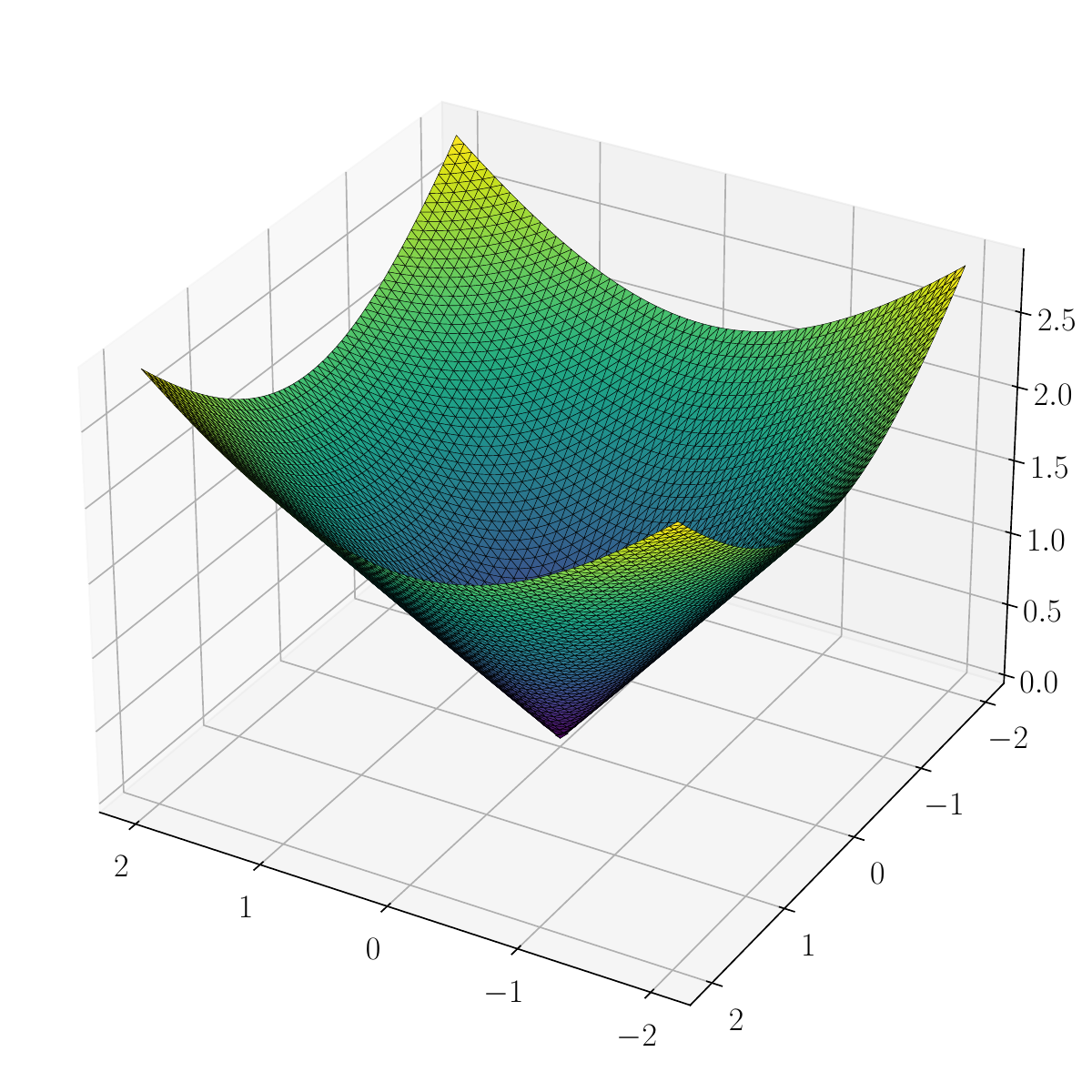}%
	\includegraphics[width=6.25cm]{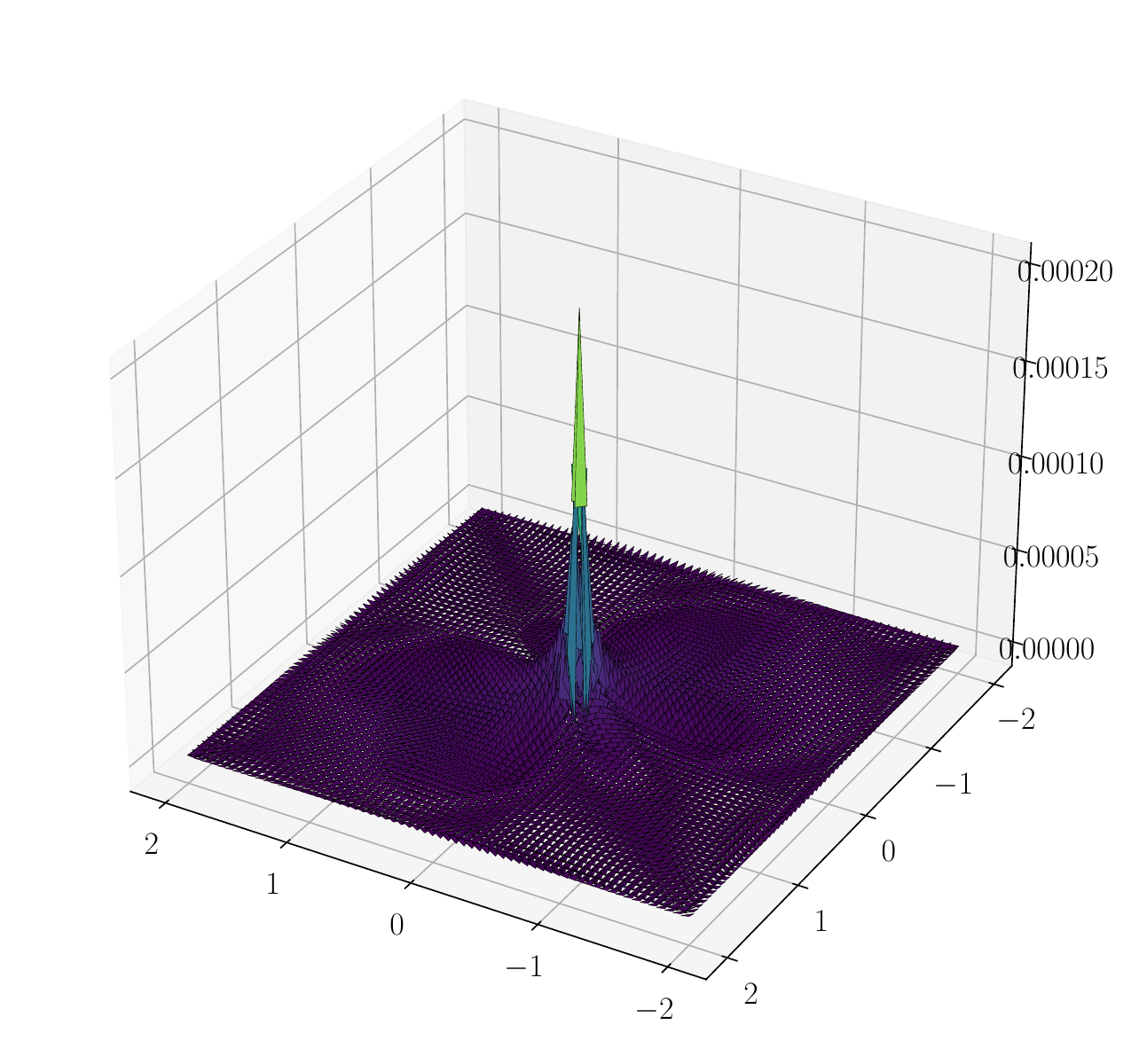}\vspace*{-0.5cm}
	\caption{Plots of  $\Uppi_{h_4}^1(\vert \bu_4\vert )$ (left) and $\Uppi_{h_4}^1(\vert \bu_4 -\bu\vert )$ (right) for ${p=4}$, ${\alpha=2.5}$.}%
	\label{fig:plot_uh}%
\end{figure}\vspace*{-1.25cm}

\begin{figure}[H]
	\centering
	\hspace*{-0.25cm}\includegraphics[width=6cm]{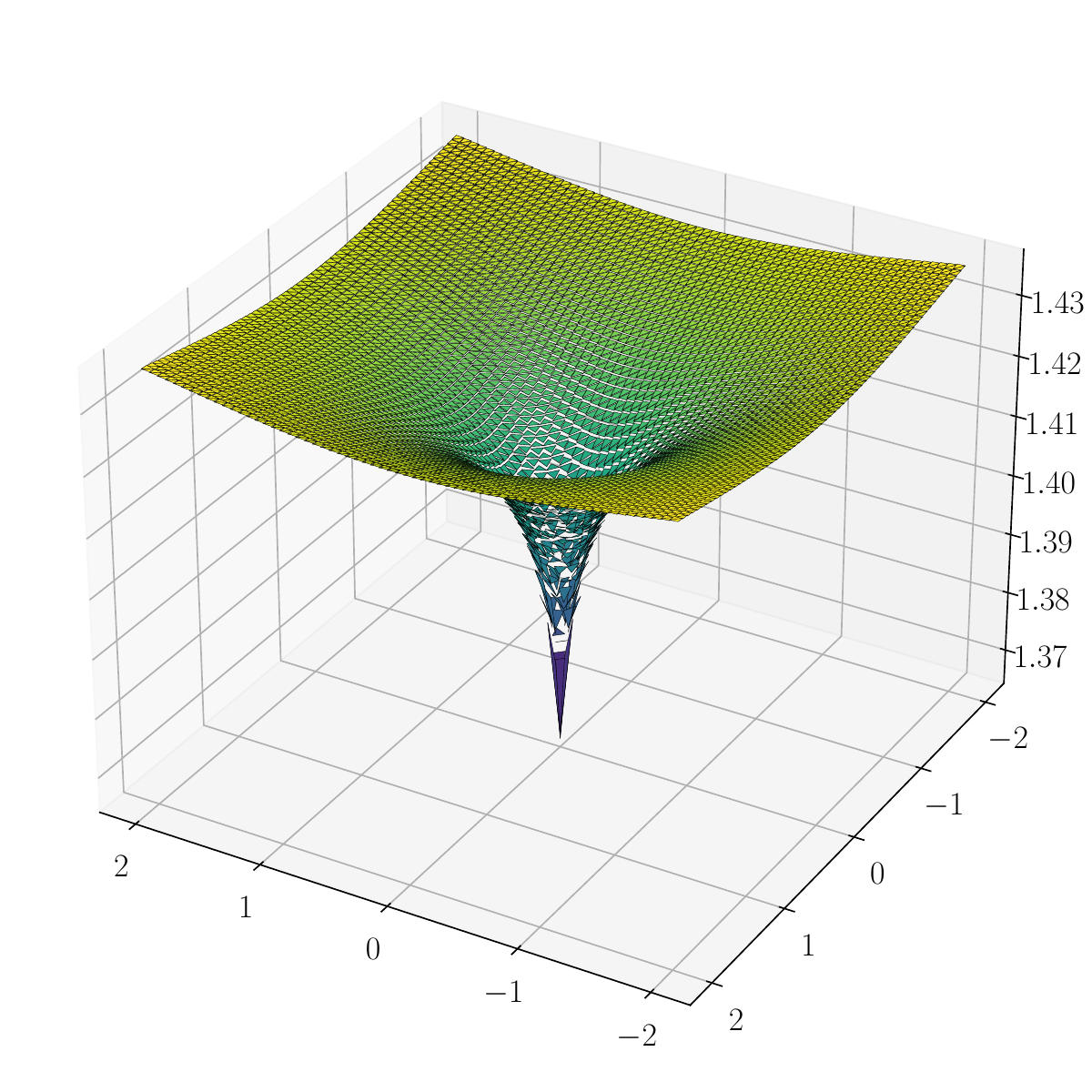}%
	\includegraphics[width=6cm]{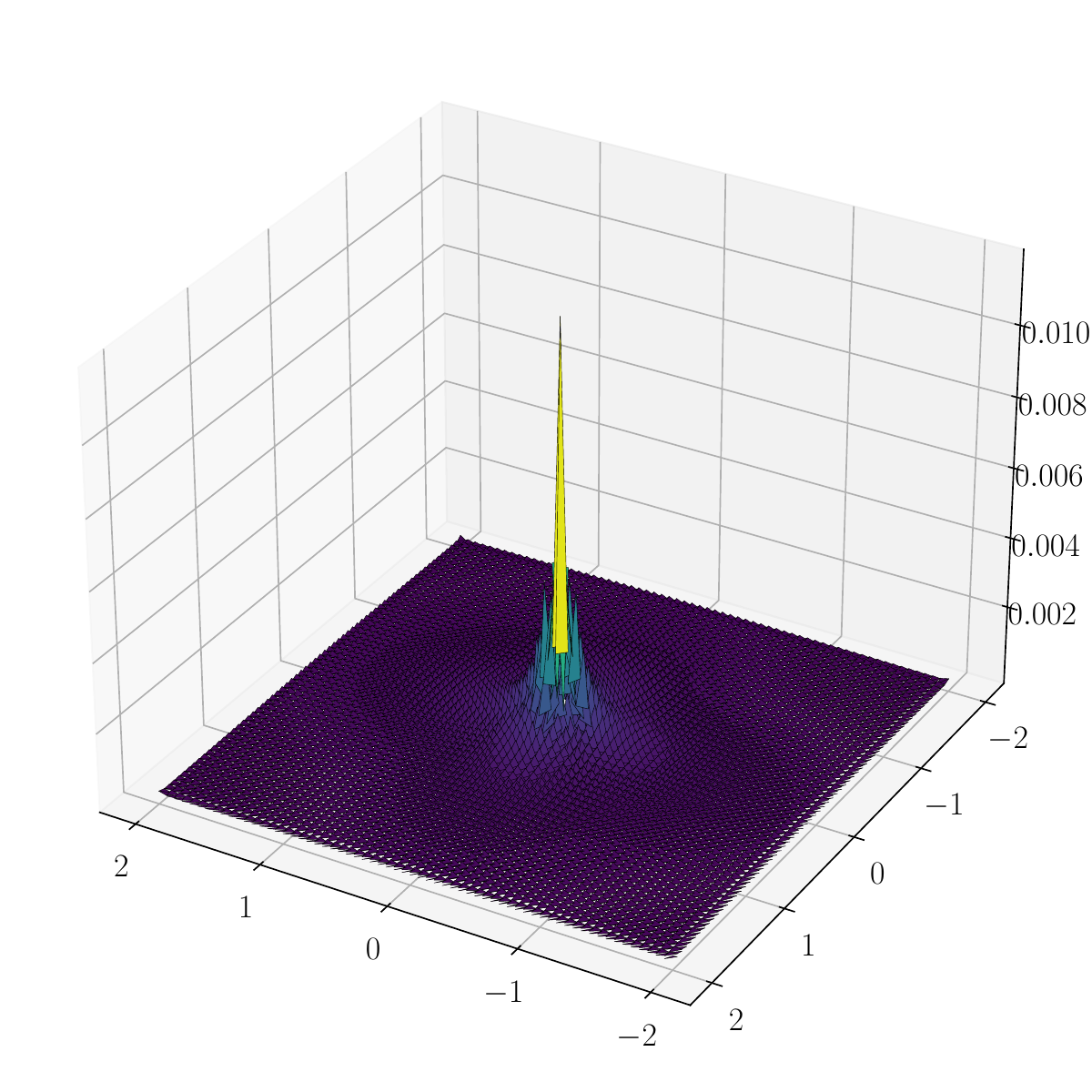}\vspace*{-0.5cm}
	\caption{Plots of $\Uppi_{h_4}^1(\vert \bL_4\vert )$ (left) and  ${\Uppi_{h_4}^1(\vert \bL_4-\bL\vert )}$~(right) for ${p=4}$, ${\alpha=2.5}$.}\vspace*{-1.25cm}
	\label{fig:plot_Lh}%
\end{figure}

\begin{figure}[H]
	\centering
	\hspace*{-0.25cm}\includegraphics[width=6cm]{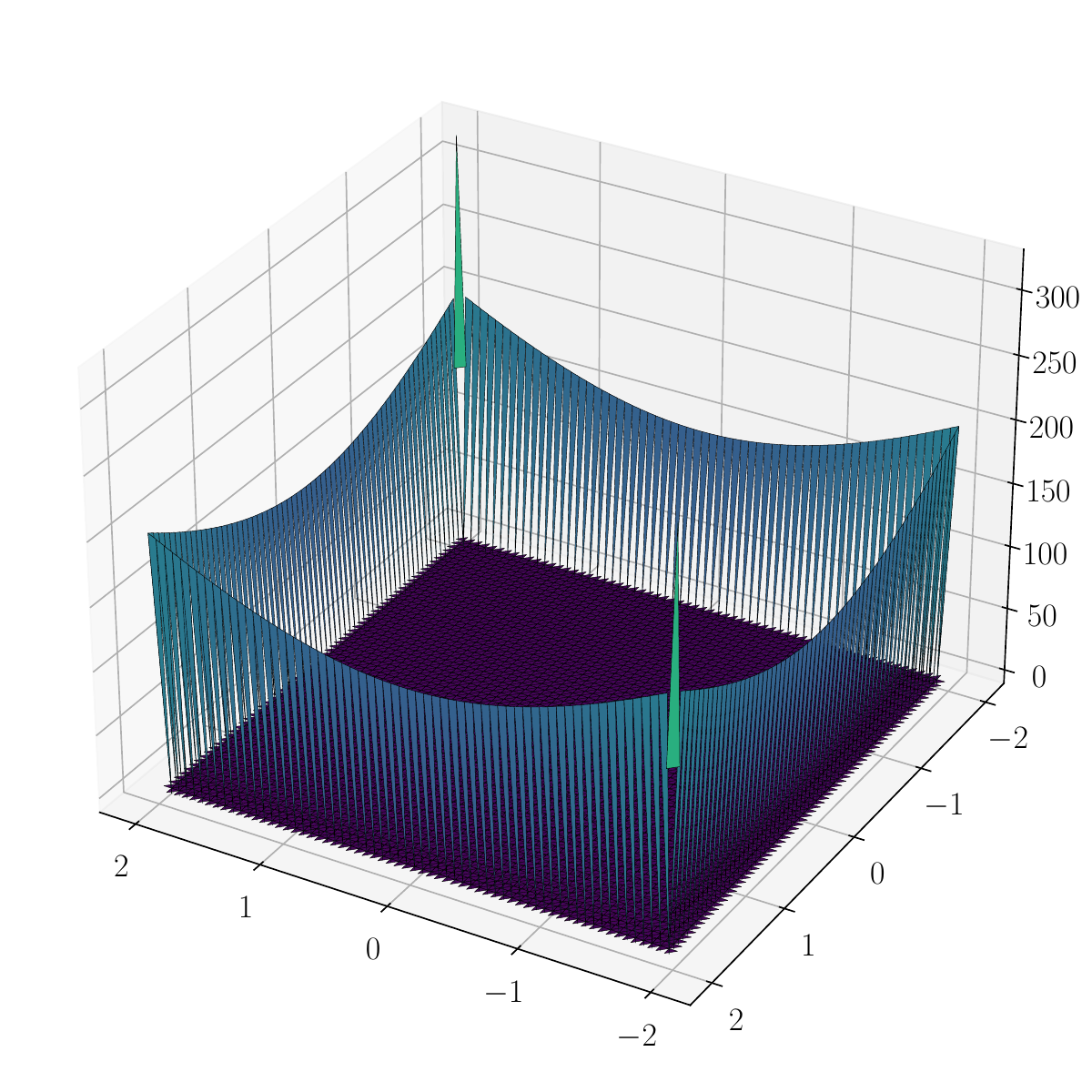}%
	\includegraphics[width=6cm]{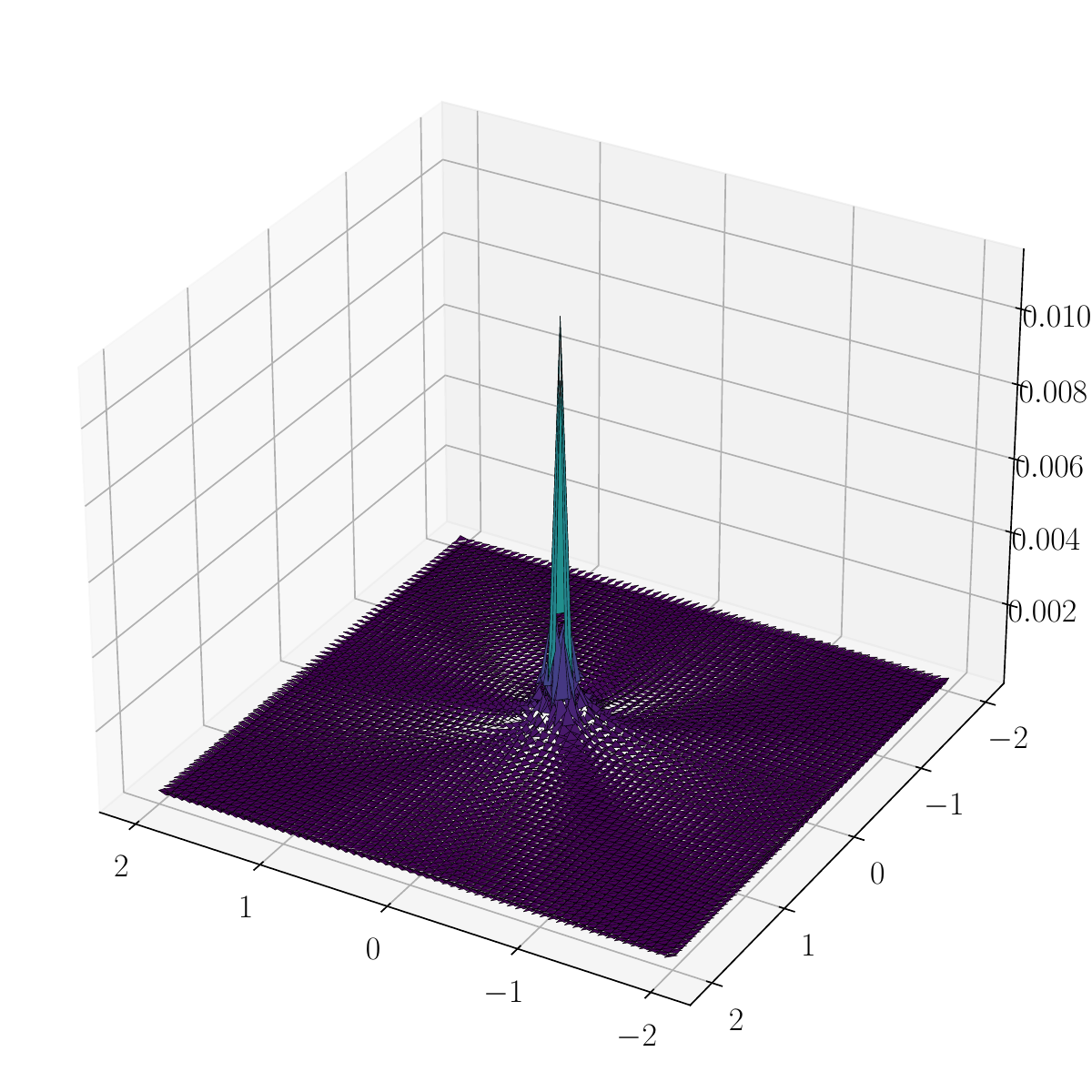}\vspace*{-0.5cm}
	\caption{Plots of  $\Uppi_{h_4}^1(\vert \bR_{h_4}^1\bu_4 \vert )$ (left) and  $\Uppi_{h_4}^1\!(\vert \bR_{h_4}^1(\bu_4 -\bu)\vert )$ (right) for ${p=4}$, ${\alpha=2.5}$.}%
	\label{fig:plot_Rh}%
\end{figure}

\appendix 
\section{}
\label{sec:aux}

This Appendix collects known results, used in the paper,
and proves new results in the DG Orlicz-setting. 

\begin{lemma}\label{lem:stabRhk}
  Let $\psi:\setR^{\ge 0}\to \setR^{\ge 0}$ be an N-function
 satisfying the $\Delta_2$-condition and $k\in \mathbb{N}_0$.  Then, for every
  $\bw_h \in W^{1,\psi}(\mathcal T_h)$, we have that
  \begin{align}
    \label{eq:Rgammaest}
    \begin{aligned}
      \int_{S_\gamma} \psi \big ( \boldsymbol{\mathcal
        R}^k_{h,\gamma}\bw_h\big)\, \textup{d}x
      &\le c\, h\,\int_\gamma \psi  ( h^{-1}\abs{\jump{\bw_h \otimes
          \bn}})\, \textup{d}s\,,
      \\
      \rho_{\psi,\Omega} \big (\Rhk \bw_h\big) &\leq c\,
      m_{\psi,h}(\bw_h)
    \end{aligned}
  \end{align}
  with a constant $c>0$ depending only on $k\in\mathbb{N}_0$, 
  $\Delta_2(\psi)>0$, and $\omega_0>0$.
\end{lemma}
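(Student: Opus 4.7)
The plan is to localise to each patch $S_\gamma$ and exploit the finite-dimensional structure of $\mathcal{P}_k(K)$ via scaling to the reference simplex, combined with the convexity and $\Delta_2$-property of $\psi$. I would proceed in four short steps.

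First, set $\bfZ_h := \boldsymbol{\mathcal{R}}^k_{h,\gamma}\bw_h \in \Xhk$. I would verify that $\bfZ_h$ is supported in $S_\gamma$: any $\bfX_h \in \Xhk$ vanishing on $S_\gamma$ has $\avg{\bfX_h}_\gamma = \bfzero$, so the defining identity \eqref{eq:2.25} gives $(\bfZ_h, \bfX_h) = 0$. Taking $\bfX_h$ equal to $\bfZ_h$ on $\Omega \setminus S_\gamma$ and $\bfzero$ on $S_\gamma$ (an admissible choice in $\Xhk$, since it is piecewise polynomial) forces $\bfZ_h = \bfzero$ off $S_\gamma$.

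Next, on each $K \subseteq S_\gamma$, I would derive a pointwise bound. Fixing a basis $\{\phi_i\}_{i=1}^{N_k}$ of $\mathcal{P}_k(K)$ obtained as the pushforward of a fixed reference basis on $\hat K$, I extend each $\phi_i$ by $\bfzero$ outside $K$ to an element of $\Xhk$. Testing \eqref{eq:2.25} with these basis elements yields a linear system for the coefficients of $\bfZ_h|_K$ whose mass matrix equals the reference mass matrix up to the factor $|K|/|\hat K|$ and is therefore uniformly invertible with bounds depending only on $k$ and $\omega_0$. Since the right-hand side is bounded by $c\int_\gamma |\jump{\bw_h \otimes \bn}|\,\textup{d}s$ (using $\|\phi_i|_\gamma\|_\infty \le c$ from the fixed reference basis) and $|K|\sim h^n$, $|\gamma|\sim h^{n-1}$ by chunkiness, this yields
\begin{equation*}
  \|\bfZ_h\|_{L^\infty(K)} \le c\,|K|^{-1}\int_\gamma |\jump{\bw_h \otimes \bn}|\,\textup{d}s \le c\, h^{-1}\dashint_\gamma |\jump{\bw_h \otimes \bn}|\,\textup{d}s\,.
\end{equation*}
Applying $\psi$ pointwise, using $\Delta_2(\psi)$ to absorb multiplicative constants inside $\psi$, and then Jensen's inequality for the convex function $\psi$, I obtain
\begin{equation*}
  \int_K \psi(|\bfZ_h|)\,\textup{d}x \le |K|\,\psi\bigl(\|\bfZ_h\|_{L^\infty(K)}\bigr) \le c\,|K|\,\dashint_\gamma \psi\bigl(h^{-1}|\jump{\bw_h \otimes \bn}|\bigr)\,\textup{d}s \le c\,h\int_\gamma \psi\bigl(h^{-1}|\jump{\bw_h \otimes \bn}|\bigr)\,\textup{d}s\,.
\end{equation*}
Summing over the at most two simplices $K \subseteq S_\gamma$ yields the first estimate in \eqref{eq:Rgammaest}.

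For the second inequality, I would decompose $\Rhk\bw_h = \sum_{\gamma \in \Gamma_I\cup\Gamma_D}\boldsymbol{\mathcal{R}}^k_{h,\gamma}\bw_h$ and observe via the support property of Step~1 that on each simplex $K$, only the (at most $n+1$) faces of $K$ contribute to the sum. Convexity of $\psi$ combined with $\Delta_2(\psi)$ then yields the pointwise bound $\psi(|\Rhk\bw_h|) \le c\sum_{\gamma}\psi(|\boldsymbol{\mathcal{R}}^k_{h,\gamma}\bw_h|)$; integrating over $\Omega$, applying the first estimate, and recognising the sum as $m_{\psi,h}(\bw_h)$ produces the claim. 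The main technical point is Step~2: one has to carefully track that the finite-dimensional norm equivalences on the reference polynomial space yield constants depending only on $k$ and $\omega_0$. The key reason to go through the $L^\infty$ estimate rather than the more natural Orlicz-duality argument is precisely that the latter would need stability of an $L^2$-projection in $L^{\psi^*}$ (hence $\Delta_2(\psi^*)$), which is not assumed here.
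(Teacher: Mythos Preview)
Your proof is correct. The paper does not actually prove this lemma at all: it simply cites \cite[(A.25)]{dkrt-ldg} after remarking that elements of $\WDGpsi$ extend by zero to $\Omega'\setminus\Omega$. Your argument is therefore more informative than the paper's, and the route you take---support localisation, an $L^\infty$ bound on each $K\subseteq S_\gamma$ via inversion of the (scaled) reference mass matrix, followed by Jensen's inequality and a finite-overlap summation---is the standard one and is essentially what the cited reference does. Your closing remark about avoiding $\Delta_2(\psi^*)$ by going through the $L^\infty$ estimate is apt and matches the hypotheses of the lemma. One minor cosmetic point: $\bfZ_h$ is tensor-valued, so the basis argument in Step~2 is implicitly done component-wise; this changes nothing in the estimates.
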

\begin{proof}
	The assertions are proved in \cite[(A.23), (A.25)]{dkrt-ldg}.
\end{proof}
\begin{lemma}
	\label{lem:equiW1psi}
	Let $\psi:\setR^{\ge 0}\to \setR^{\ge 0}$  be an N-function satisfying the $\Delta_2$-condition and $k\in \mathbb{N}_0$.  Then, for every $\bw_h \in \WDGpsi$, we have that 
	\begin{align}\label{eq:equi}
		c^{-1}\,  M_{\psi,h}(\bw_h) \le \rho_{\psi,\Omega}\big(\Ghk \bw_h\big) +
		m_{\psi,h}(\bw_h) \le c\,  M_{\psi,h}(\bw_h)
	\end{align}
	with a constant $c>0$ depending only on $k\in\mathbb{N}_0$, 
        $\Delta_2(\psi)>0$, and $\omega_0>0$.
      \end{lemma}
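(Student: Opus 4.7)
The plan is to exploit the identity $\Ghk \bw_h = \nabla_h \bw_h - \Rhk \bw_h$ from \eqref{eq:DGnablaR} together with the $L^\psi$-stability of the jump operator $\Rhk$ (i.e.\ the second inequality of \eqref{eq:Rgammaest} in Lemma~\ref{lem:stabRhk}) and the $\Delta_2$-condition on $\psi$. Since $\psi\in\Delta_2$, the modular $\rho_{\psi,\Omega}$ is quasi-additive: for any $\mathbf{U},\mathbf{V}\in L^\psi(\Omega)$, one has $\rho_{\psi,\Omega}(\mathbf{U}+\mathbf{V})\le c\,\rho_{\psi,\Omega}(\mathbf{U})+c\,\rho_{\psi,\Omega}(\mathbf{V})$, with $c>0$ depending only on $\Delta_2(\psi)$. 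This is the only nontrivial ingredient besides \eqref{eq:Rgammaest}.

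For the upper bound, I would start from $\Ghk \bw_h=\nabla_h\bw_h-\Rhk\bw_h$ and apply the quasi-additivity of $\rho_{\psi,\Omega}$ together with \eqref{eq:Rgammaest}, giving
\begin{align*}
\rho_{\psi,\Omega}\big(\Ghk\bw_h\big)\le c\,\rho_{\psi,\Omega}(\nabla_h\bw_h)+c\,\rho_{\psi,\Omega}\big(\Rhk\bw_h\big)\le c\,\rho_{\psi,\Omega}(\nabla_h\bw_h)+c\,m_{\psi,h}(\bw_h).
\end{align*}
Adding $m_{\psi,h}(\bw_h)$ to both sides and using the definition \eqref{def:mh} of $M_{\psi,h}$ yields
$\rho_{\psi,\Omega}(\Ghk\bw_h)+m_{\psi,h}(\bw_h)\le c\,M_{\psi,h}(\bw_h)$.

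For the lower bound, I would write $\nabla_h \bw_h = \Ghk\bw_h+\Rhk\bw_h$ and apply the same two ingredients in the reverse direction, obtaining
\begin{align*}
\rho_{\psi,\Omega}(\nabla_h\bw_h)\le c\,\rho_{\psi,\Omega}\big(\Ghk\bw_h\big)+c\,\rho_{\psi,\Omega}\big(\Rhk\bw_h\big)\le c\,\rho_{\psi,\Omega}\big(\Ghk\bw_h\big)+c\,m_{\psi,h}(\bw_h).
\end{align*}
Adding $m_{\psi,h}(\bw_h)$ on both sides again gives $M_{\psi,h}(\bw_h)\le c\,\big(\rho_{\psi,\Omega}(\Ghk\bw_h)+m_{\psi,h}(\bw_h)\big)$, which is the claimed lower bound.

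There is no real obstacle: the whole argument is a two-line application of the triangle-type inequality for $\Delta_2$-modulars combined with the already-established stability of $\Rhk$. The only point worth checking carefully is that all constants indeed only depend on $k$, $\Delta_2(\psi)$, and the chunkiness $\omega_0$; this is immediate since these are the sole dependencies entering \eqref{eq:Rgammaest} and the quasi-additivity of $\rho_{\psi,\Omega}$.
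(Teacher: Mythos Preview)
Your argument is correct and is precisely the approach underlying the paper's proof, which simply cites \cite[(A.26), (A.28)]{dkrt-ldg} together with Lemma~\ref{lem:stabRhk}; those references encode exactly the two-sided estimate you wrote out using $\Ghk=\nabla_h-\Rhk$, the $\Delta_2$-quasi-additivity of $\rho_{\psi,\Omega}$, and the stability bound \eqref{eq:Rgammaest}. The constant dependencies you identify are the correct ones.
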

\begin{proof}
	The assertion follows from \cite[(A.26), (A.28)]{dkrt-ldg} and Lemma
	\ref{lem:stabRhk}.
\end{proof}
\begin{lemma}
	\label{lem:stabpi}
	Let $\psi:\setR^{\ge 0}\to \setR^{\ge 0}$ be an N-function satisfying the $\Delta_2$-condition and $k\in \mathbb{N}_0$.
	Then,
	for every $K \in \mathcal{T}_h$, $\bfu \in W^{\ell,\psi}(K)$ and 
	$0\leq j \leq \ell \leq k+1$,~we~have~that
	\begin{align}
		\label{eq:PiDGapproxpsi}
		\dashint_K \psi\big(h_K^j \vert\nabla^j(\bfu - \PiDG \bfu)\vert\big)\,\textup{d}x &\leq
		c\, \dashint_K \psi\big(h_K^{\ell} \vert \nabla^{\ell} \bfu\vert\big)\,\textup{d}x\,,\\
		\dashint_K \psi\big(h_K^j \vert\nabla^j\PiDG \bfu\vert\big)\,\textup{d}x &\leq
		c\, \dashint_K \psi\big(h_K^{\ell} \vert \nabla^{\ell} \bfu\vert\big)\,\textup{d}x \label{eq:PiDGstabpsi}
	\end{align}
	with a constant $c>0$ depending only on $\ell, k\in
        \mathbb{N}_0$, $\Delta_2(\psi)>0$, and $ \omega_0>0$.
\end{lemma}
\begin{proof}
	The first assertion is shown in \cite[(A.7)]{dkrt-ldg}, and
        the second one follows from the triangle inequality. 
\end{proof}
\begin{corollary}\label{cor:stabpi}
	Let $\psi:\setR^{\ge 0}\to \setR^{\ge 0}$ be an N-function satisfying the $\Delta_2$-condition and $k\in \mathbb{N}_0$. Then,
	for every $\bu \in L^{\psi}(\Omega)$ and $\bw_h \in W^{1,\psi}(\mathcal T_h)$, we have that
	\begin{align}
		\label{eq:PiDGapprox0}
		\rho_{\psi,\Omega}\big(\bu - \PiDG \bu\big) &\leq c\,
		\rho_{\psi,\Omega}(\bu)\,,
		\\
		\label{eq:PiDGapprox1a}
		\rho_{\psi,\Omega}\big(\bw_h - \PiDG \bw_h\big) &\leq c\,
	\rho_{\psi,\Omega}(h\, {\nabla_h \bw_h})\,,
		\\
		\label{eq:PiDGapprox2}
		\rho_{\psi,\Omega}\big(\nabla_h \bw_h - \nabla_h \PiDG \bw_h\big) &\leq c\,
		\rho_{\psi,\Omega}({\nabla_h \bw_h})
	\end{align}
	with a constant $c>0$ depending only on $k\in\mathbb{N}_0$, 
  $\Delta_2(\psi)>0$, and $\omega_0>0$. In~particular, this implies for every $k\in \setN_0$, $\bu \in L^{\psi}(\Omega)$ and $\bw_h \in W^{1,\psi}(\mathcal T_h)$ that
	\begin{align}
		\label{eq:PiDGLpsistable}
		\rho_{\psi,\Omega}\big(\PiDG \bu\big) &\leq c\,
	\rho_{\psi,\Omega}(\bu)\,,
		\\
		\label{eq:PiDGLW1psistable}
		\rho_{\psi,\Omega}\big(\nabla_h \PiDG \bw_h\big) &\leq c\,
		\rho_{\psi,\Omega}(\nabla_h \bw_h)\,.
	\end{align} 
	For every $k \in \setN$ and $\bw_h \in W^{2,\psi}(\mathcal T_h)$, we have that
	\begin{align}
		\label{eq:PiDGapprox1}
		\rho_{\psi,\Omega}\big(\nabla_h \bw_h - \nabla_h \PiDG \bw_h\big) &\leq c\,
		\rho_{\psi,\Omega}\big({h\nabla^2_h \bw_h}\big)
	\end{align}
	with a constant $c>0$ depending only on $k\in\mathbb{N}$, 
  $\Delta_2(\psi)>0$, and $\omega_0>0$.
\end{corollary}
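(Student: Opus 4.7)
The plan is to derive each of the stated inequalities by specializing the local estimate of Lemma~\ref{lem:stabpi} to appropriate pairs $(j,\ell)$ with $0\le j\le\ell\le k+1$, then summing over $K\in \mathcal{T}_h$ and using $h_K\le h\le 1$ together with the $\Delta_2$-condition on $\psi$ (which also controls shifts of the argument by constants and permits the inequality $\psi(a+b)\le c(\psi(a)+\psi(b))$).

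First, I would establish \eqref{eq:PiDGapprox0} by applying Lemma~\ref{lem:stabpi} with $j=\ell=0$ on each $K\in \mathcal{T}_h$: since the right-hand side is $c\dashint_K\psi(|\bu|)\,\textup{d}x$, multiplying by $|K|$ and summing over $K\in \mathcal{T}_h$ yields $\rho_{\psi,\Omega}(\bu-\PiDG\bu)\le c\,\rho_{\psi,\Omega}(\bu)$. Next, \eqref{eq:PiDGapprox1a} follows by applying Lemma~\ref{lem:stabpi} with $j=0$ and $\ell=1$ on each $K$ (using $\bw_h\in W^{1,\psi}(\mathcal T_h)$, so $\nabla^\ell$ means $\nabla$ on each $K$), after which summation and $h_K\le h$ give the bound $c\,\rho_{\psi,\Omega}(h\nabla_h\bw_h)$. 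For \eqref{eq:PiDGapprox2}, I would observe that $\nabla_h\PiDG$ applied elementwise agrees with the gradient of the local polynomial projection, and apply Lemma~\ref{lem:stabpi} with $j=\ell=1$ to get the local bound by $\dashint_K\psi(h_K|\nabla\bw_h|)\,\textup{d}x$; since the shift $h_K$ inside $\psi$ can be dropped (up to a $\Delta_2$-constant), summation gives the desired estimate. The bound \eqref{eq:PiDGapprox1} is obtained in the same way with $j=1,\ell=2$ on $\bw_h\in W^{2,\psi}(\mathcal T_h)$.

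The two stability estimates are then immediate consequences by the triangle inequality on modulars. For \eqref{eq:PiDGLpsistable}, I write $\PiDG\bu=\bu-(\bu-\PiDG\bu)$ and use $\Delta_2$ of $\psi$ together with \eqref{eq:PiDGapprox0} to conclude
\begin{align*}
\rho_{\psi,\Omega}(\PiDG\bu)\le c\bigl(\rho_{\psi,\Omega}(\bu)+\rho_{\psi,\Omega}(\bu-\PiDG\bu)\bigr)\le c\,\rho_{\psi,\Omega}(\bu).
\end{align*}
For \eqref{eq:PiDGLW1psistable}, the same argument applied to $\nabla_h\bw_h-\nabla_h\PiDG\bw_h$ combined with \eqref{eq:PiDGapprox2} yields $\rho_{\psi,\Omega}(\nabla_h\PiDG\bw_h)\le c\,\rho_{\psi,\Omega}(\nabla_h\bw_h)$.

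There is no real obstacle here; the entire proof is a direct bookkeeping exercise built on Lemma~\ref{lem:stabpi}. The only points of (minor) care are: (i) keeping the powers of $h_K$ consistent when passing $\psi$-arguments in and out (exploiting $h\le 1$ and $\Delta_2(\psi)$ to absorb constants), and (ii) noting that $\PiDG$ acts elementwise, so that $\nabla_h\PiDG\bw_h$ on each $K$ coincides with the derivative of the local $L^2$-projection of $\bw_h|_K$, permitting Lemma~\ref{lem:stabpi} to be applied elementwise even though $\bw_h$ is only in $W^{1,\psi}(\mathcal T_h)$.
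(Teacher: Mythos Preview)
Your approach is correct and is exactly what the paper intends (the corollary is stated without proof, as a direct consequence of Lemma~\ref{lem:stabpi}); the choices $(j,\ell)=(0,0),(0,1),(1,1),(1,2)$ and the modular triangle inequality via $\Delta_2$ are the right moves.

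One step needs correction. For \eqref{eq:PiDGapprox2} you invoke Lemma~\ref{lem:stabpi} with $j=\ell=1$, which yields
\[
\dashint_K \psi\big(h_K\,|\nabla_h(\bw_h-\PiDG\bw_h)|\big)\,\textup{d}x \le c\,\dashint_K\psi\big(h_K\,|\nabla\bw_h|\big)\,\textup{d}x,
\]
with $h_K$ on \emph{both} sides. Your claim that ``the shift $h_K$ inside $\psi$ can be dropped (up to a $\Delta_2$-constant)'' does not work: using $h_K\le 1$ you may drop $h_K$ on the right, but on the left $\psi(h_K t)\le\psi(t)$ goes the wrong direction, and $\Delta_2$ does not give $\psi(t)\le c\,\psi(h_K t)$ uniformly in $h_K$. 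The correct argument is scale-invariance: the constant in Lemma~\ref{lem:stabpi} depends on $\psi$ only through $\Delta_2(\psi)$, and $\Delta_2(\psi(\lambda\,\cdot\,))=\Delta_2(\psi)$ for every $\lambda>0$. Applying the lemma with $t\mapsto\psi(h_K^{-1}t)$ therefore removes $h_K$ from both sides simultaneously, giving
\[
\dashint_K \psi\big(|\nabla_h(\bw_h-\PiDG\bw_h)|\big)\,\textup{d}x \le c\,\dashint_K\psi\big(|\nabla\bw_h|\big)\,\textup{d}x
\]
with the same constant. The same rescaling is needed for \eqref{eq:PiDGapprox1} to pass from $(h_K,h_K^2)$ to $(1,h_K)$ before using $h_K\le h$.
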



\begin{lemma}
  \label{pro:SZnablaF}
    Let $\AAA$ satisfy Assumption~\ref{ass:1} for a balanced
   {N-func\-tion}~${\phi}$~and~${k\in \mathbb{N}}$. Let $\bfF(\nabla \bfu) \in W^{1,2}(\Omega)$, then
  \begin{align*}
    \bignorm{\bfF\big(\nabla_h \PiDG \bfu\big) - \bfF(\nabla \bfu)}_2^2 &\le c\,
    h^2 \, \norm{\nabla \bF(\nabla \bu)}_{2}^2
  \end{align*}
  with $c\!>\!0$ depending only on the characteristics of $\AAA$ and $\phi$,
  and the chunkiness~$\omega_0$. The same
  assertion also holds for the Scott--Zhang interpolation operator~$\PiSZ$.
\end{lemma}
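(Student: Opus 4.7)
The plan is to convert the claim into a modular estimate via Proposition~\ref{lem:hammer}: since $|\bfF(\bfP) - \bfF(\bfQ)|^2 \sim \phi_{|\bfP|}(|\bfP-\bfQ|)$, it suffices to bound $\sum_{K\in\mathcal T_h}\int_K \phi_{|\nabla \bfu|}(|\nabla \PiDG \bfu - \nabla \bfu|)\,\mathrm{d}x$ by $c\,h^2 \|\nabla \bfF(\nabla \bfu)\|_2^2$, and I will do so element-wise. On each $K\in\mathcal T_h$, I introduce a constant reference $\bar{\bfP}_K \in \setR^{d\times n}$ chosen so that $\bfF(\bar{\bfP}_K) = \mean{\bfF(\nabla \bfu)}_K$; such a $\bar{\bfP}_K$ exists because $\bfF$ is a homeomorphism of $\setR^{d\times n}$ (its magnitude $|\bfF(\bfP)|^2 = \phi'(|\bfP|)\,|\bfP|$ is strictly increasing in $|\bfP|$, and $\bfF(\bfP)$ is parallel to $\bfP$).

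Applying Lemma~\ref{cor:change3} pointwise with $\bfP = \nabla \bfu(x)$, $\bfQ = \bar{\bfP}_K$ and integrating over $K$ yields
\[
\int_K \phi_{|\nabla \bfu|}(|\nabla \PiDG \bfu - \nabla \bfu|)\,\mathrm{d}x \le c \int_K \phi_{|\bar{\bfP}_K|}(|\nabla \PiDG \bfu - \nabla \bfu|)\,\mathrm{d}x + c \int_K |\bfF(\nabla \bfu) - \bfF(\bar{\bfP}_K)|^2\,\mathrm{d}x.
\]
The last term equals $\int_K |\bfF(\nabla \bfu) - \mean{\bfF(\nabla \bfu)}_K|^2\,\mathrm{d}x$ by the choice of $\bar{\bfP}_K$, and Poincar\'e's inequality applied to $\bfF(\nabla \bfu) \in W^{1,2}(K)$ bounds it by $c\,h_K^2 \int_K |\nabla \bfF(\nabla \bfu)|^2\,\mathrm{d}x$.

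For the first (shifted) term I use, assuming $k\ge 1$, that $\PiDG$ reproduces affine fields, so $\PiDG(\bar{\bfP}_K\cdot x) = \bar{\bfP}_K\cdot x$ on $K$ and hence $\nabla \PiDG \bfu - \nabla \bfu = \nabla \PiDG(\bfu - \bar{\bfP}_K\cdot x) - \nabla(\bfu - \bar{\bfP}_K\cdot x)$. Invoking the shifted $W^{1,\phi_{|\bar{\bfP}_K|}}$-stability of $\PiDG$ from Corollary~\ref{cor:stabpi}, \eqref{eq:PiDGapprox2} (uniform in the shift thanks to the uniform $\Delta_2$-bound of $\phi_a$ in Lemma~\ref{lem:shift}(i)) yields
\[
\int_K \phi_{|\bar{\bfP}_K|}(|\nabla \PiDG \bfu - \nabla \bfu|)\,\mathrm{d}x \le c \int_K \phi_{|\bar{\bfP}_K|}(|\nabla \bfu - \bar{\bfP}_K|)\,\mathrm{d}x \sim c \int_K |\bfF(\nabla \bfu) - \bfF(\bar{\bfP}_K)|^2\,\mathrm{d}x,
\]
which is again controlled by Poincar\'e as above. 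Summing over $K\in \mathcal T_h$ and using $h_K\le h$ closes the estimate for $\PiDG$.

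The assertion for the Scott--Zhang operator $\PiSZ$ follows by the same scheme, with $K$ replaced by the patch $S_K$ on which $\PiSZ \bfu|_K$ depends; one uses the polynomial preservation together with the shifted $W^{1,\phi_a}$-stability of $\PiSZ$ established in \cite{dr-interpol,dkrt-ldg}, and the finite overlap of the patches produces the global bound. The main obstacle I expect is the bookkeeping required to ensure that the shift-change and shifted-stability constants are genuinely uniform in $\bar{\bfP}_K$; this ultimately rests on the uniform $\Delta_2$-bound for the shifted N-functions in Lemma~\ref{lem:shift}(i).
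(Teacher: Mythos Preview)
Your argument is correct and is in fact the standard proof of this approximation result; the paper itself does not give a proof at all but simply cites \cite[Corollary~5.8]{dr-interpol}. Your route---localize to $K$, introduce $\bar{\bfP}_K$ with $\bfF(\bar{\bfP}_K)=\mean{\bfF(\nabla\bfu)}_K$, shift-change to the constant shift, use polynomial preservation of $\PiDG$ together with its shifted Orlicz stability, and close with Poincar\'e on $\bfF(\nabla\bfu)$---is exactly the argument behind that cited corollary.

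Two remarks. First, you invoke the global estimate \eqref{eq:PiDGapprox2} but work element-wise; strictly speaking you want the local version from Lemma~\ref{lem:stabpi} (with $j=\ell=1$), which is legitimate since $\PiDG$ acts element by element. Second, your restriction to $k\ge 1$ is not a weakness of your proof but a genuine necessity: for $k=0$ one has $\nabla_h\Pia\bfu=\bfzero$, so the left-hand side equals $\norm{\bfF(\nabla\bfu)}_2^2$, which does not tend to zero with $h$. The paper's ``$k\in\setN_0$'' is therefore a slip; elsewhere the scheme is set up with $k\in\setN$, and that is all the lemma is ever used for.
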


\begin{proof}
	The assertion is proved in \cite[Corollary 5.8]{dr-interpol}.
\end{proof}

\begin{lemma}
	\label{lem:traceW1psi}
	Let $\psi$ be an N-function satisfying the $\Delta_2$-condition and $k\in \mathbb{N}_0$.
	Let $K \in \mathcal{T}_h$~and $\gamma$ be a face of~$K$. Then, for every
	$\bu \in W^{1,\psi}(K)$ and ${\bu_h \in \mathcal
          P_k(K)}$,  it holds
	\begin{align}\label{eq:emb}
		\dashint_\gamma \psi(\abs{\bu})\,\textup{d}s &\leq c \dashint_K
		\psi(\abs{\bu}) \,\textup{d}x + c \dashint_K \psi(h\abs{\nabla
			\bu}) \,\textup{d}x\,,
		\\
		\dashint_\gamma \psi(\abs{\bu_h})\,\textup{d}s &\le c \,
		\dashint_K \psi(\abs{\bu_h})\,\textup{d}x \,, \label{eq:pol-trace}
	\end{align}
	with  constants $c>0$ depending only on $k\in\mathbb{N}_0$, 
 $\Delta_2(\psi)>0$, and $\omega_0>0$. 
\end{lemma}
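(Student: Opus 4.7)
Both inequalities are of a standard trace-type character and I would deduce them by scaling to a fixed reference simplex~$\hat K$, on which the arguments can be made explicit. Shape regularity (quantified by~$\omega_0$) ensures that the affine map $F_K\colon\hat K \to K$ has Jacobian determinant equivalent to $|K|$ and derivative bounded by $c\,h_K$, so that the passage between $K$ and~$\hat K$ alters modular expressions only up to multiplicative constants depending on $\omega_0$; all constants below silently incorporate this dependence.

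For \eqref{eq:emb}, I would apply the divergence theorem on~$K$ with an auxiliary smooth vector field $\mathbf{b}\colon K\to\setR^n$ satisfying $\mathbf{b}\cdot\bfn\equiv 1$ on~$\gamma$, $\mathbf{b}\cdot\bfn\equiv 0$ on $\partial K\setminus\gamma$, together with $\|\mathbf{b}\|_\infty\le c$ and $\|\nabla\mathbf{b}\|_\infty\le c\,h^{-1}$. Such a field exists on~$\hat K$ by explicit construction and scales correctly. Applied to $\psi(|\bu|)\,\mathbf{b}$, this yields
\begin{align*}
\int_\gamma \psi(|\bu|)\,\textup{d}s
&= \int_K \psi(|\bu|)\,\divo\mathbf{b}\,\textup{d}x + \int_K \mathbf{b}\cdot\nabla \psi(|\bu|)\,\textup{d}x\\
&\le c\,h^{-1}\!\int_K \psi(|\bu|)\,\textup{d}x + c\int_K \psi'(|\bu|)\,|\nabla \bu|\,\textup{d}x.
\end{align*}
Since only $\psi\in\Delta_2$ is at our disposal (not $\psi^*$), I would handle the second term by the standard Young inequality $\psi'(|\bu|)\cdot h|\nabla\bu|\le \psi^*(\psi'(|\bu|))+\psi(h|\nabla\bu|)$ combined with the elementary convexity bound
\begin{equation*}
\psi^*(\psi'(a))\le \psi'(a)\,a \le \psi(2a)\le c\,\psi(a),
\end{equation*}
which only uses $\psi\in\Delta_2$. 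This gives $\int_K \psi'(|\bu|)|\nabla\bu|\,\textup{d}x\le c\,h^{-1}\int_K\psi(|\bu|)+\psi(h|\nabla\bu|)\,\textup{d}x$. Dividing through by $|\gamma|\sim h^{n-1}$ and $|K|\sim h^n$ produces \eqref{eq:emb} directly.

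For \eqref{eq:pol-trace}, I would pull $\bu_h$ back to~$\hat K$. On the finite-dimensional space $\mathcal{P}_k(\hat K)$ all norms are equivalent, so $\|\hat\bu_h\|_{L^\infty(\hat K)}\le c_k\dashint_{\hat K}|\hat\bu_h|\,\textup{d}x$. Combining this with monotonicity of~$\psi$ and Jensen's inequality,
\begin{equation*}
\dashint_{\hat\gamma}\psi(|\hat\bu_h|)\,\textup{d}s \le \psi\bigl(\|\hat\bu_h\|_{L^\infty(\hat K)}\bigr) \le \psi\Bigl(c_k\dashint_{\hat K}|\hat\bu_h|\,\textup{d}x\Bigr) \le c\,\dashint_{\hat K}\psi(|\hat\bu_h|)\,\textup{d}x,
\end{equation*}
where the final step absorbs $c_k$ via iterated applications of $\Delta_2(\psi)$. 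Transporting back to $K$ via~$F_K$ preserves both averaged modulars up to constants depending only on~$\omega_0$, yielding~\eqref{eq:pol-trace}.

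The only genuinely non-routine ingredient is the treatment of $\psi'(|\bu|)|\nabla\bu|$ in the bulk; the remaining steps are affine scaling and Jensen's inequality. The hypothesis excludes $\Delta_2(\psi^*)$, so the refined Young inequality \eqref{ineq:young} is not available; the minor subtlety is to replace it by the elementary convexity estimate above, which suffices because $\Delta_2(\psi)$ alone controls $\psi^*\circ\psi'$ by $\psi$ up to a multiplicative constant.
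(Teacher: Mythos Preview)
Your proof is correct. The paper does not give its own argument here; it simply cites \cite[(A.13), (A.14)]{dkrt-ldg} for both inequalities, so your self-contained treatment is strictly more detailed than what appears in the text.

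A couple of remarks. For \eqref{eq:emb}, the auxiliary field you invoke exists explicitly on a simplex: with $P$ the vertex opposite $\gamma$ and $h_\gamma$ the height from $P$ to $\gamma$, the linear field $\mathbf{b}(x)=(x-P)/h_\gamma$ satisfies $\mathbf{b}\cdot\bfn=1$ on $\gamma$, $\mathbf{b}\cdot\bfn=0$ on the remaining faces, $\abs{\mathbf{b}}\le c(\omega_0)$, and $\abs{\nabla\mathbf{b}}=c\,h^{-1}$, so no smoothness issue arises. Your observation that $\psi^*(\psi'(a))\le a\,\psi'(a)\le\psi(2a)\le\Delta_2(\psi)\,\psi(a)$ is exactly what is needed to stay within the hypothesis $\psi\in\Delta_2$ alone, without invoking $\psi^*\in\Delta_2$; this is the correct reading of the lemma's assumptions. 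For \eqref{eq:pol-trace}, the inverse estimate on $\mathcal P_k(\hat K)$ combined with Jensen and $\Delta_2$ is the standard argument.
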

\begin{proof}
	The assertions are proved in \cite[(A.13), (A.14)]{dkrt-ldg}.
\end{proof}
\begin{corollary}\label{cor:PiDGapproxmlocal}
	Let  $\psi:\setR^{\ge 0}\to \setR^{\ge 0}$ be an N-function satisfying the $\Delta_2$-condition and $k\in \mathbb{N}_0$.
	Let $K \in \mathcal{T}_h$ and $\gamma$ be a face of~$K$. Then, for
	every $\bu \in W^{1,\psi}(K)$, $\bw_h \in \WDGpsi$ and $\bu_h \in \Vhk$, we have that
	\begin{align}
          \label{eq:PiDGapproxmlocal}
          h \int_\gamma \psi\big(h^{-1} \bigabs{\bfu - \PiDG \bfu}\big)\,\textup{d}s 
          &\leq c \int_K \psi(\abs{\nabla \bfu})\,\textup{d}x\,,
          \\[-1.5mm]
		\label{eq:PiDGapproxmglobal}
          m_{\psi,h}\big(\bw_h - \PiDG \bw_h\big) &\leq c\,\rho_{\psi,\Omega}(\nabla_h \bw_h)\,,
          \\
          \label{eq:PiDGapproxmglobal2}
          m_{\psi,h}(\bw_h )
          &\leq c\,\rho_{\psi,\Omega}\big(h^{-1}
            \bw_h\big)+c\,\rho_{\psi,\Omega}(\nabla_h \bw_h)\,, 
          \\
          \label{eq:PiDGapproxmglobal1}
          h\,\rho_{\psi,\smash{\Gamma_h^{i}}\cup\Gamma_D}(\avg{\bu_h}
          ) &\leq c\,\rho_{\psi,\Omega}( \bu_h)\,,
          \\
          \label{eq:PiDGapproxmglobal3}
          h\,\rho_{\psi,\smash{\Gamma_h^{i}}\cup\Gamma_D}(\avg{\bw_h-\PiDG
          \bw_h} ) &\leq c\,\rho_{\psi,\Omega}( h\nabla _h\bw_h)\,,
	\end{align}
	with  constants $c>0$ depending only on $k\in\mathbb{N}_0$, $\Delta_2(\psi)$, and $\omega_0>0$.
\end{corollary}
\begin{proof}
	For the assertions \eqref{eq:PiDGapproxmlocal} and
	\eqref{eq:PiDGapproxmglobal}, we refer to \cite[(A.15) \& (A.16)]{dkrt-ldg}.~The assertions \eqref{eq:PiDGapproxmglobal2} and
	\eqref{eq:PiDGapproxmglobal1} follow from \eqref{eq:emb} and
	\eqref{eq:pol-trace}, resp., by summation, while
        \eqref{eq:PiDGapproxmglobal3} follows from
        \eqref{eq:PiDGapproxmlocal} by summation.
\end{proof}

\begin{corollary}\label{cor:PiDGapproxmspecial}
	Let $\AAA$ satisfy Assumption~\ref{ass:1} for a balanced
	{N-func\-tion} ${\phi}$ and $k\in \mathbb{N}_0$. Moreover, let $\bfu\in W^{1,\phi}(\Omega) $ satisfy $\bfF(\nabla \bfu) \in W^{1,2}(\Omega)$. Then, for every $\bfw_h\in W^{1,\phi}(\mathcal{T}_h) $, it holds
	\begin{align}
		\label{eq:PiDGapproxmspecialglobal}
		m_{\phi_{\abs{\nabla\bfu}},h}\big(\bw_h - \PiDG \bw_h\big) \leq c\,\rho_{\phi_{\abs{\nabla\bu }},\Omega}(\nabla_h\bw_h)+c\,h^2\,\|\nabla\bfF(\nabla\bfu)\|_2^2\,,
	\end{align}
	with  constants $c>0$ depending only on $k\in\mathbb{N}_0$,
        the characteristics of $\AAA$ and $\phi$, and $\omega_0>0$.
\end{corollary}

\begin{proof}
  Using the convexity of $\phi_{\abs{\nabla\bfu}}$, twice a change of
  shift (cf.~Lemma \ref{lem:change2}), Proposition \ref {lem:hammer},
  the local trace inequalities \eqref{eq:PiDGapproxmlocal} and
  \eqref{eq:emb} and \Poincare's inequality on $K$ together with
  \cite[Lemma A.12]{bdr-phi-stokes} we find that
	\begin{align*}
          &h \int_\gamma
            \phi_{\abs{\nabla\bfu}}\big(h^{-1}\abs{\jump{(\bfw_h -
            \PiDG \bfw_h)\otimes\bfn}}\big)\,\textup{d}s
          \\
          &\leq h\sum_{K\in \mathcal{T}_h;K\subseteq
            S_\gamma}{\int_\gamma{\phi_{\abs{\mean{\nabla\bfu}_K}}\big(h^{-1}
            \abs{ \bfw_h - (\PiDG
            \bfw_h)|_K}\big)+\abs{\bfF(\nabla\bfu)
            -\bfF(\mean{\nabla\bfu}_K)}^2\,\textup{d}s}} 
          \\
          & \leq \sum_{K\in \mathcal{T}_h;K\subseteq
            S_\gamma}{\int_K{\phi_{\abs{\mean{\nabla\bfu}_K}}\big(\vert
            \nabla\bfw_h\vert\big)\,\textup{d}s}}+h^2\sum_{K\in
            \mathcal{T}_h;K\subseteq
            S_\gamma}{\int_K{\abs{\nabla\bfF(\nabla\bfu)}^2\,\textup{d}s}} 
          \\
          &\leq \int_{S_\gamma}{\phi_{\abs{\nabla\bfu}}\big(\vert
            \nabla_h\bfw_h\vert\big)\,\textup{d}s}+h^2\int_{S_\gamma}{\abs{\nabla\bfF(\nabla\bfu)}^2\,\textup{d}s}\,.  
	\end{align*}
	Then, the assertion follows by summing with respect to $\gamma\in \smash{\Gamma_h^{i}}\cup\Gamma_D$.
\end{proof}

\begin{lemma}\label{lem:conv}
	Let $\psi:\setR^{\ge 0}\to \setR^{\ge 0}$ be an N-function satisfying the
	$\Delta_2$-condition and $k\in \mathbb{N}$. Then, for every $\bu \in W^{1,\psi}(\Omega)$, we have that
	\begin{alignat}{2}
		\label{eq:grad}
		\rho_{\psi,\Omega}\big(\nabla_h(\PiDG \bu -\bu)\big)&\to 0 &&\quad (h\to0)\,,
		\\
		\label{eq:mod}
		m_{\psi,\Omega}\big(\PiDG \bu -\bu\big)&\to 0 &&\quad (h\to0)\,,
		\\
		\label{eq:R}
		\rho_{\psi,\Omega}\big(\Rhk(\PiDG \bu -\bu)\big)&\to 0 &&\quad (h\to0)\,,
		\\
		\label{eq:G}
		\rho_{\psi,\Omega}\big(\Ghk(\PiDG \bu -\bu)\big)&\to 0 &&\quad (h\to0)\,.
	\end{alignat}
\end{lemma}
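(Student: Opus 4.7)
The plan is to proceed by density, reducing to the smooth case and then invoking the approximation and stability properties of $\PiDG$ collected in this appendix, combined with a dominated-convergence argument. Since $\psi$ satisfies the $\Delta_2$-condition, $C^\infty(\bar\Omega)$ is dense in $W^{1,\psi}(\Omega)$ in the modular sense: for every $\varepsilon>0$ there exists $\bv \in C^\infty(\bar\Omega)$ with $\rho_{\psi,\Omega}(\bu-\bv) + \rho_{\psi,\Omega}(\nabla(\bu-\bv)) < \varepsilon$. I would prove \eqref{eq:grad} and \eqref{eq:mod} first; \eqref{eq:R} and \eqref{eq:G} then follow by abstract manipulation from the definitions of $\Rhk$ and $\Ghk$.

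For \eqref{eq:grad}, write $\PiDG\bu - \bu = \PiDG(\bu-\bv) - (\bu-\bv) + (\PiDG\bv - \bv)$ and apply the triangle inequality together with $\Delta_2(\psi)$ to obtain
\begin{align*}
\rho_{\psi,\Omega}\big(\nabla_h(\PiDG\bu - \bu)\big) &\le c\, \rho_{\psi,\Omega}\big(\nabla_h \PiDG(\bu-\bv)\big) + c\,\rho_{\psi,\Omega}\big(\nabla(\bu-\bv)\big) \\
&\quad + c\, \rho_{\psi,\Omega}\big(\nabla_h(\PiDG\bv-\bv)\big).
\end{align*}
The $L^\psi$-gradient stability \eqref{eq:PiDGLW1psistable} dominates the first term by $c\,\rho_{\psi,\Omega}(\nabla(\bu-\bv)) < c\varepsilon$. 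For the last term, the second-order approximation \eqref{eq:PiDGapprox1}, available because $k\ge 1$ and $\bv \in C^2(\bar\Omega)$, yields $\rho_{\psi,\Omega}(\nabla_h(\PiDG\bv-\bv)) \le c\, \rho_{\psi,\Omega}(h\,|\nabla^2\bv|)$, which vanishes as $h\to 0$ by dominated convergence with integrable dominant $\psi(|\nabla^2\bv|)$ (valid for $h\le 1$). Taking $\limsup_{h\to 0}$ and then $\varepsilon\to 0$ concludes \eqref{eq:grad}.

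The proof of \eqref{eq:mod} follows the same template. The splitting
$$m_{\psi,h}(\PiDG\bu - \bu) \le c\,m_{\psi,h}\big(\PiDG(\bu-\bv) - (\bu-\bv)\big) + c\,m_{\psi,h}(\PiDG\bv - \bv)$$
combined with the modular approximation \eqref{eq:PiDGapproxmglobal} bounds the first term by $c\,\rho_{\psi,\Omega}(\nabla(\bu-\bv)) < c\varepsilon$. For the smooth part, the trace-summation \eqref{eq:PiDGapproxmglobal2} applied to $\bw_h = \PiDG\bv - \bv$ gives
$$m_{\psi,h}(\PiDG\bv - \bv) \le c\,\rho_{\psi,\Omega}\big(h^{-1}(\PiDG\bv - \bv)\big) + c\,\rho_{\psi,\Omega}\big(\nabla_h(\PiDG\bv - \bv)\big);$$
since $k\ge 1$, the standard polynomial-approximation estimates $\|\PiDG\bv - \bv\|_{L^\infty(K)} \le c\, h_K^2 \|\nabla^2\bv\|_{L^\infty(K)}$ and $\|\nabla(\PiDG\bv - \bv)\|_{L^\infty(K)} \le c\, h_K \|\nabla^2\bv\|_{L^\infty(K)}$ force both integrands to vanish uniformly as $h\to 0$, and dominated convergence closes the argument.

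Finally, \eqref{eq:R} is immediate from \eqref{eq:Rgammaest} applied to $\bw_h = \PiDG\bu - \bu$, giving $\rho_{\psi,\Omega}(\Rhk(\PiDG\bu-\bu)) \le c\,m_{\psi,h}(\PiDG\bu-\bu) \to 0$ by \eqref{eq:mod}. Since $\Ghk = \nabla_h - \Rhk$ by \eqref{eq:DGnablaR}, the triangle inequality with $\Delta_2(\psi)$ yields $\rho_{\psi,\Omega}(\Ghk(\PiDG\bu-\bu)) \le c\,\rho_{\psi,\Omega}(\nabla_h(\PiDG\bu-\bu)) + c\,\rho_{\psi,\Omega}(\Rhk(\PiDG\bu-\bu))$, and both terms tend to zero by \eqref{eq:grad} and \eqref{eq:R}. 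The main obstacle is Orlicz bookkeeping: ensuring that the dominated-convergence steps admit genuine integrable dominants independent of $h\in(0,1]$, and that each triangle inequality absorbs only constants depending on $\Delta_2(\psi)$.
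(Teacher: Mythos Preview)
Your proof is correct and follows essentially the same density-plus-approximation strategy as the paper for \eqref{eq:grad}, \eqref{eq:R}, and \eqref{eq:G}. The only notable difference is in \eqref{eq:mod}: the paper exploits that $\PiDG$ is a projection, so $\PiDG(\PiDG\bu-\bu)=\bfzero$, and applies \eqref{eq:PiDGapproxmglobal} with $\bw_h=\PiDG\bu-\bu$ to obtain directly $m_{\psi,h}(\PiDG\bu-\bu)\le c\,\rho_{\psi,\Omega}(\nabla_h(\PiDG\bu-\bu))$, which vanishes by \eqref{eq:grad}; this bypasses your second density argument and the appeal to \eqref{eq:PiDGapproxmglobal2} (which, as stated, controls traces rather than jumps, though the passage is easily justified).
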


\begin{proof}
	For every $\bu \in W^{1,\psi}(\Omega)$, there exists a sequence
	$(\bu^n)_{n\in \mathbb{N}} \subseteq C^{\infty}(\overline {\Omega})$ such that
	\begin{align}
		\rho_{\psi, \Omega}\big(\nabla (\bu^n-\bu)\big)\to 0 \quad  (n \to
		\infty)\,.\label{eq:1b}
	\end{align}
	Thus, using \eqref{eq:PiDGLW1psistable},
	\eqref{eq:PiDGapprox1} and the properties of the N-function $\psi$,
	we find that
	\begin{align}
          \label{eq:1a}
          \begin{aligned}
            \rho_{\psi,\Omega}\big(\nabla_h(\PiDG \bu -\bu)\big) &\le
            c\,\rho_{\psi,\Omega}\big(\nabla_h\PiDG (\bu -\bu^n)\big)
            + c\, \rho_{\psi,\Omega}\big(\nabla_h(\PiDG \bu^n
            -\bu^n)\big)
            \\
            &\quad + c\, \rho_{\psi,\Omega}\big(\nabla(\bu^n
            -\bu)\big)
            \\
            &\le c\, \rho_{\psi,\Omega}\big(\nabla(\bu^n -\bu)\big)
            +c\, \rho_{\psi,\Omega}\big(h\,\nabla^2\bu^n\big)
            \,.
		\end{aligned}
	\end{align}
	Due to \eqref{eq:1b}, for every $\vep >0$, there
	exists $n_0\in \setN$ such that
	${c\rho_{\psi,\Omega}(\nabla(\bu^n -\bu)) \!\le\! \vep}$~for all $n\in\mathbb{N}$ with $n\ge n_0$. 
	Therefore,  choosing $n=n_0\in \mathbb{N}$ in \eqref{eq:1a}, we~\mbox{conclude}~that $ \lim_{h\to 0} \rho_{\psi,\Omega}(\nabla_h(\PiDG \bu -\bu))
	\!\le\! \vep$,
	which yields \eqref{eq:grad}, since $\vep\!>\!0$~was~chosen~\mbox{arbitrarily}.
	
	Choosing $\PiDG \bu -\bu= \PiDG \bu -\bu- \PiDG (\PiDG \bu -\bu)$
	in \eqref{eq:PiDGapproxmglobal}, since ${W^{1,\psi}(\Omega) \subseteq \WDGpsi}$,
	also using \eqref{eq:grad}, we find that $  m_{\psi,\Omega}(\PiDG \bu -\bu) \le c\,
	\rho_{\psi,\Omega}(\nabla_h(\PiDG \bu -\bu))\to 0$~${(h\to 0)}$,
	which is \eqref{eq:mod}.
	
	Next, \eqref{eq:Rgammaest} and \eqref{eq:mod} yield $\rho_{\psi,\Omega}(\Rhk(\PiDG \bu -\bu)) \le c\,m_{\psi,\Omega}(\PiDG \bu -\bu) \to 0$~$(h\to 0)$,
	which is \eqref{eq:R}.
	
	Finally, \eqref{eq:G} follows from the definition of
	$\Ghk$ in \eqref{eq:DGnablaR}, \eqref{eq:grad} and \eqref{eq:R}. 
\end{proof}
\begin{lemma}
	\label{lem:poincareWDGz1psi}
	Let $\psi:\setR^{\ge 0}\to \setR^{\ge 0}$ be an N-function
        such that $\psi $ and $\psi^*$ satisfy the
        $\Delta_2$-condition. Then, for~every~$\bw_h \in \WDGpsi$, we have that
	\begin{align}
		\rho_{\psi,\Omega}(\bw_h) &\leq c\,
		M_{\psi,h}(\diameter(\Omega)\,  \bw_h)\,,\label{eq:poincare}
		\\
		\rho_{\psi, \Gamma_N}(\bw_h) &\leq c\, \diameter(\Omega)^{-1}
		M_{\psi,h}(\diameter(\Omega)\, \bw_h)\,,\label{eq:trace}
	\end{align}
	where $c>0$ only depends on $\Omega, \Omega'\subseteq\setR^n$, $n\ge 2$, and $\Delta_2(\psi), \Delta_2(\psi^*),\omega_0>0$.
\end{lemma}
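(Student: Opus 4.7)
My plan is to reduce the DG--Poincar\'e and DG--trace inequalities \eqref{eq:poincare}, \eqref{eq:trace} to their classical Orlicz--Sobolev counterparts by means of a conforming node-averaging (enriching) operator $E_h$ that maps $\bw_h\in\WDGpsi$, extended by zero from $\Omega$ to $\Omega'$, to a continuous piecewise polynomial of degree $k$ on $\mathcal T_h'$, vanishing on $\Omega'\setminus\Omega$ (hence with zero trace on $\Gamma_D$). The key technical ingredients are the Orlicz versions of the Karakashian--Pascal estimates
\begin{align*}
\rho_{\psi,\Omega}\big(\bw_h-E_h\bw_h\big)&\le c\,m_{\psi,h}(\bw_h)\,,\\
\rho_{\psi,\Omega}\big(\nabla E_h\bw_h\big)&\le c\,M_{\psi,h}(\bw_h)\,.
\end{align*}
These follow from the finite-dimensional observation that, on each patch, $|\bw_h-E_h\bw_h|$ is pointwise controlled by the values of $\jump{\bw_h}$ at nearby faces, combined with the $\Delta_2$-condition of $\psi$ to pass from pointwise to modular bounds; in particular, they are invariant under the rescaling $\bw_h\mapsto\diameter(\Omega)\bw_h$.

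For \eqref{eq:poincare}, I split $\bw_h=(\bw_h-E_h\bw_h)+E_h\bw_h$ and use the $\Delta_2$-condition of $\psi$ to obtain
\begin{align*}
\rho_{\psi,\Omega}(\bw_h)\le c\,\rho_{\psi,\Omega}\big(\bw_h-E_h\bw_h\big)+c\,\rho_{\psi,\Omega}(E_h\bw_h)\,.
\end{align*}
The first term is dominated by $c\,m_{\psi,h}(\bw_h)\le c\,m_{\psi,h}(\diameter(\Omega)\bw_h)$ via monotonicity of $\psi$. For the second, the classical Orlicz--Sobolev Poincar\'e inequality (available because $\abs{\Gamma_D}>0$) applied to $E_h\bw_h$, which vanishes on $\Omega'\setminus\Omega$, yields
\begin{align*}
\rho_{\psi,\Omega}(E_h\bw_h)\le c\,\rho_{\psi,\Omega}\big(\diameter(\Omega)\nabla E_h\bw_h\big)\le c\,M_{\psi,h}\big(\diameter(\Omega)\bw_h\big)\,,
\end{align*}
where the last step applies the gradient stability of $E_h$ to the rescaled function $\diameter(\Omega)\bw_h$.

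For \eqref{eq:trace}, I again split $\bw_h=(\bw_h-E_h\bw_h)+E_h\bw_h$. For the conforming part, the classical Orlicz trace inequality combined with Poincar\'e gives
\begin{align*}
\rho_{\psi,\Gamma_N}(E_h\bw_h)\le c\,\diameter(\Omega)^{-1}\rho_{\psi,\Omega}\big(\diameter(\Omega)\nabla E_h\bw_h\big)\le c\,\diameter(\Omega)^{-1}M_{\psi,h}\big(\diameter(\Omega)\bw_h\big)\,.
\end{align*}
For the nonconforming remainder, Lemma~\ref{lem:traceW1psi} applied on each simplex $K\in\mathcal T_h$ with $\partial K\cap\Gamma_N\neq\emptyset$ bounds $\rho_{\psi,\Gamma_N}(\bw_h-E_h\bw_h)$ by a weighted combination of $\rho_{\psi,\Omega}(\bw_h-E_h\bw_h)$ and $\rho_{\psi,\Omega}(h\nabla_h(\bw_h-E_h\bw_h))$; the enriching estimates together with $h\le\diameter(\Omega)$ then produce the claimed scaling. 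The main obstacle is establishing the Orlicz-form Karakashian--Pascal estimates while tracking the $\Delta_2$-constants cleanly; crucially, the gradient-stability term must inherit the $\nabla_h\bw_h$-contribution without a spurious factor of $h^{-1}$, which ultimately rests on the sharp jump-operator bound in Lemma~\ref{lem:stabRhk}.
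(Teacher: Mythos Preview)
The paper does not prove this lemma here; it simply defers to \cite[Lemma~A.9, Lemma~A.10]{dkrt-ldg}. Your enriching-operator strategy is a standard route to such DG--Poincar\'e and DG--trace inequalities and is broadly sound, but there is a genuine gap in your outline as written.

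The node-averaging operator $E_h$ is built from point evaluations at Lagrange nodes and therefore acts only on piecewise polynomials, i.e., on $\Vhk$, not on all of $\WDGpsi$. The lemma, however, is stated for arbitrary $\bw_h\in W^{1,\psi}(\mathcal T_h)$, where nodal values are in general undefined (for $n\ge 2$ there is no embedding of $W^{1,\psi}(K)$ into $C(\overline K)$ without extra growth assumptions on $\psi$). The Karakashian--Pascal estimates you invoke are thus not directly available for such $\bw_h$. The standard repair is to insert the $L^2$-projection: set $\bv_h:=\PiDG\bw_h\in\Vhk$, run your argument on $\bv_h$, and control both $\bw_h-\bv_h$ and the change in the jump modular via Corollary~\ref{cor:stabpi} (in particular \eqref{eq:PiDGapprox1a}, \eqref{eq:PiDGapprox2}) together with \eqref{eq:PiDGapproxmglobal}; these give $\rho_{\psi,\Omega}(\bw_h-\bv_h)\le c\,\rho_{\psi,\Omega}(h\,\nabla_h\bw_h)$ and $m_{\psi,h}(\bv_h)\le c\,M_{\psi,h}(\bw_h)$, which suffices to close the argument.

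A smaller point: your final sentence attributes the absence of a spurious $h^{-1}$ in the gradient stability of $E_h$ to Lemma~\ref{lem:stabRhk}. That lemma concerns the lifting operator $\Rhk$ and plays no role in the enriching-operator estimates. The correct mechanism is purely local: on each $K$, $\nabla(E_h\bv_h-\bv_h)$ is a polynomial whose $L^\infty$-norm is bounded by $h^{-1}$ times the $L^\infty$-jumps on the faces of the patch, and equivalence of norms on $\mathcal P_k(K)$ together with $|K|\sim h\,|\gamma|$ converts this into the modular $m_{\psi,h}$ with the right scaling.
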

\begin{proof}
	This is proved in \cite[Lemma A.9, Lemma A.10]{dkrt-ldg}.
\end{proof}

\begin{lemma}\label{lem:wc}
	Let $\psi:\setR^{\ge 0}\to \setR^{\ge 0}$ be an N-function 
        such that $\psi $ and $\psi^*$ satisfy the
        $\Delta_2$-condition and $k\!\in\! \mathbb{N}$. Let $\bw_h\! \in\! \WDGpsi$, $h\!>\!0$,~be~such~that~${\sup_{h>0}M_{\psi,h}(\bw_h) \!\le\! c}$. Then, for the sequence~$\bw_n\vcentcolon=\bw_{h_n}\in W^{1,\psi}(\mathcal{T}_{h_n})$, where $h_n \to 0$  $(n\to \infty)$, there exists a function
	$\bw \in W^{1,\psi}_{\Gamma_D}(\Omega)$ such that, up to subsequences, we have that
	\begin{alignat}{3}
		\label{eq:weakpsi}
		\bw_n &\weakto \bw &&\quad\text { in } L^\psi(\Omega)&&\quad(n\to \infty)\,,
		\\
		\label{eq:weaknabla}
		\boldsymbol{\mathcal{G}}_{h_n}^k\bw_n &\weakto \nabla\bw &&\quad\text { in } L^\psi(\Omega)&&\quad(n\to \infty)\,,
		\\
		\label{eq:weaktrace}
		\bw_n &\weakto \bw &&\quad\text { in } L^\psi(\Gamma_N)&&\quad(n\to \infty)\,.
	\end{alignat}
\end{lemma}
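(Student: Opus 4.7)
The plan is to mimic the standard Rellich--type compactness argument adapted to the DG--Orlicz setting, proceeding in three stages: extraction of weak limits by reflexivity, identification of the weak limit of $\Ghnk\bu_n$ with the distributional gradient of the weak limit of $\bu_n$ via a piecewise integration-by-parts test, and treatment of boundary traces. For the first stage, since $\psi,\psi^*\in\Delta_2$, the spaces $L^\psi(\Omega)$ and $L^\psi(\Gamma_N)$ are reflexive and modular boundedness is equivalent to Luxembourg-norm boundedness. From $\sup_n M_{\psi,h_n}(\bu_n)\le c$, Lemma \ref{lem:equiW1psi} bounds $\rho_{\psi,\Omega}(\nabla_{h_n}\bu_n)$, $\rho_{\psi,\Omega}(\Ghnk\bu_n)$, and $m_{\psi,h_n}(\bu_n)$ uniformly in $n$; Poincar\'e's inequality \eqref{eq:poincare} then bounds $\rho_{\psi,\Omega}(\bu_n)$, and the trace inequality \eqref{eq:trace} bounds $\rho_{\psi,\Gamma_N}(\bu_n)$. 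Passing to a common (not relabeled) subsequence yields $\bu\in L^\psi(\Omega)$, $\boldsymbol{G}\in L^\psi(\Omega)$, $\bv\in L^\psi(\Gamma_N)$ with $\bu_n\weakto\bu$ in $L^\psi(\Omega)$, $\Ghnk\bu_n\weakto\boldsymbol{G}$ in $L^\psi(\Omega)$, and $\bu_n\weakto\bv$ in $L^\psi(\Gamma_N)$.

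For the distributional identification $\boldsymbol{G}=\nabla\bu$, I test with $\boldsymbol{\xi}\in C_c^\infty(\Omega;\setR^{d\times n})$. Element-wise integration by parts (whose contributions on $\partial\Omega$ vanish by compact support) combined with the $L^2$-self-adjointness of $\Uppi_{h_n}^k$ and the defining identity \eqref{eq:2.25.1} of $\Rhnk$ yield
\begin{align*}
	(\Ghnk\bu_n,\boldsymbol{\xi}) = -(\bu_n,\divergence\boldsymbol{\xi}) + \bigl\langle\jump{\bu_n\otimes\bn},\{\boldsymbol{\xi}-\Uppi_{h_n}^k\boldsymbol{\xi}\}\bigr\rangle_{\Gamma_I\cup\Gamma_D} - \bigl(\Rhnk\bu_n,\boldsymbol{\xi}-\Uppi_{h_n}^k\boldsymbol{\xi}\bigr).
\end{align*}
The first term tends to $-(\bu,\divergence\boldsymbol{\xi})$ by weak convergence; the last vanishes since Lemma \ref{lem:stabRhk} bounds $\Rhnk\bu_n$ in $L^\psi(\Omega)$, while $\rho_{\psi^*,\Omega}(\boldsymbol{\xi}-\Uppi_{h_n}^k\boldsymbol{\xi})\to 0$ by \eqref{eq:PiDGapprox1a} applied with $\psi^*$. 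The middle term is the main obstacle: a priori only $h_n\,\rho_{\psi,\Gamma_I\cup\Gamma_D}(h_n^{-1}\jump{\bu_n\otimes\bn})$ is bounded, not the unscaled modular of the jump. The crucial observation is that the convexity of $\psi$ with $\psi(0)=0$ together with $h_n\le 1$ imply $\psi(t)\le h_n\psi(h_n^{-1}t)$, whence $\rho_{\psi,\Gamma_I\cup\Gamma_D}(\jump{\bu_n\otimes\bn})\le m_{\psi,h_n}(\bu_n)\le c$; combined with Young's inequality \eqref{ineq:young} and the strong convergence $\rho_{\psi^*,\Gamma_I\cup\Gamma_D}(\{\boldsymbol{\xi}-\Uppi_{h_n}^k\boldsymbol{\xi}\})\to 0$ (smoothness of $\boldsymbol{\xi}$ plus Corollary~\ref{cor:PiDGapproxmlocal}), the middle term vanishes in the limit (first $n\to\infty$, then $\varepsilon\to 0$). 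Thus $(\boldsymbol{G},\boldsymbol{\xi})=-(\bu,\divergence\boldsymbol{\xi})$ for every such $\boldsymbol{\xi}$, so $\boldsymbol{G}=\nabla\bu$ distributionally and $\bu\in W^{1,\psi}(\Omega)$.

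For the boundary behavior, repeating the identity above with $\boldsymbol{\xi}\in C^\infty(\overline{\Omega};\setR^{d\times n})$ produces the additional contribution $\langle\bu_n,\boldsymbol{\xi}\bn\rangle_{\Gamma_N}\to\langle\bv,\boldsymbol{\xi}\bn\rangle_{\Gamma_N}$, since no jump operator intervenes on $\Gamma_N$. Comparing the resulting limit identity $(\boldsymbol{G},\boldsymbol{\xi})=-(\bu,\divergence\boldsymbol{\xi})+\langle\bv,\boldsymbol{\xi}\bn\rangle_{\Gamma_N}$ with the classical Green identity for $\bu\in W^{1,\psi}(\Omega)$ yields $\langle\bv-\bu,\boldsymbol{\xi}\bn\rangle_{\Gamma_N}=\langle\bu,\boldsymbol{\xi}\bn\rangle_{\Gamma_D}$. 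Localizing $\boldsymbol{\xi}$ near $\Gamma_N$ with support disjoint from $\Gamma_D$ identifies $\bv=\bu\vert_{\Gamma_N}$, while localizing near $\Gamma_D$ with arbitrary prescribed normal component forces $\bu\vert_{\Gamma_D}=0$, so $\bu\in W^{1,\psi}_{\Gamma_D}(\Omega)$, establishing \eqref{eq:weakpsi}--\eqref{eq:weaktrace}.
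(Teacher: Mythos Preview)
Your argument is correct and follows the same overall strategy as the paper (reflexivity, an integration-by-parts identity for $\Ghnk$, and passage to the limit), but the two proofs differ in how the Dirichlet condition and the jump remainder are handled. The paper extends $\bu_n$ and $\Rhnk\bu_n$ by zero to the larger domain $\Omega'\supsetneq\Omega$ and tests with $\bX\in C_0^\infty(\Omega')$; the limit then lies in $W^{1,\psi}(\Omega')$ and vanishes on $\Omega'\setminus\Omega$, which gives $\bu\in W^{1,\psi}_{\Gamma_D}(\Omega)$ immediately. You instead stay on $\Omega$ and recover both $\bv=\bu$ on $\Gamma_N$ and $\bu=\bfzero$ on $\Gamma_D$ by comparing the limit identity with Green's formula for $\bu$ and localising the test field near each boundary part; this is equally valid and avoids the auxiliary domain, at the price of a short density argument on $\partial\Omega$. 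For the face term, the paper writes the pairing as $h_n$ times two bounded modulars, whereas you bound $\rho_{\psi,\Gamma_I\cup\Gamma_D}(\jump{\bu_n\otimes\bn})\le m_{\psi,h_n}(\bu_n)$ via convexity and send the conjugate factor to zero; both routes work. One minor redundancy: your term $-(\Rhnk\bu_n,\boldsymbol{\xi}-\Uppi_{h_n}^k\boldsymbol{\xi})$ is in fact identically zero, since $\Rhnk\bu_n\in X_{h_n}^k$ and $\Uppi_{h_n}^k$ is the $L^2$-projection onto $X_{h_n}^k$, so no separate estimate is needed there.
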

\begin{proof}
	The proof is a straightforward adaptation of the proof of \cite[Theorem
	5.7]{ern-book}.  In fact, from Poincar\'e's inequality
	\eqref{eq:poincare} and the reflexivity of $L^\psi(\Omega)$, it  follows
	that there exists $\bw \in L^\psi (\Omega)$ such that for a not
	relabeled subsequence, it holds \eqref{eq:weakpsi}. We extend both
	$\bw$ and $\bw_n$ by zero to $\Omega'\setminus \Omega$ and denote
	the~extensions~again~by~$\bw$~and~$\bw_n$, respectively. Moreover,
	we extend $\smash{\boldsymbol{\mathcal{R}}_{h_n}^k \bw_n}$ by zero to $\Omega'\setminus \Omega$ and denote
	the extension again by $\smash{\boldsymbol{\mathcal{R}}_{h_n}^k \bw_n}$. Using these
	extensions and \eqref{eq:equi}, we obtain a not relabeled~sub-sequence and a function $\bG \in L^\psi (\Omega')$ such that
	\begin{align}\label{eq:4b}
		\boldsymbol{\mathcal G}_{h_n}^k\bw_n &\weakto \bfH \quad\text { in } L^\psi(\Omega')\quad(n\to\infty)\,.
	\end{align}
	We have to show that $\bfH =\nabla \bw$ holds in $L^\psi(\Omega')$. To this end, we observe that~for~every $\bX \in C^{\infty}_0 (\Omega')$, there holds
	\begin{align}
		\label{eq:4a}
		\begin{aligned}
			\big (\boldsymbol{\mathcal G}_{h_n}^k\bw_n, \bX\big )_{\Omega'} &= \big( \nabla _{h_n} \bw_n,
			\bX\big )_{\Omega'} - \big (\boldsymbol{\mathcal R}_{h_n}^k\bw_n ,\Uppi_{h_n}^k \bX\big
			)_{\Omega'}
			\\
			&= -  ( \bw_n , \divo \bX )_{\Omega'} + \big\langle
			\llbracket\bfw_n\otimes\bfn\rrbracket,\big\{\bfX-\Uppi_{h_n}^k
			\bX\big\}\big\rangle_{\smash{\smash{\Gamma_h^{i}}'\cup\Gamma_D'}}\,.
		\end{aligned}
	\end{align}
	Using Young's inequality, $\sup_{n\in \mathbb{N}}{m_{\psi,h_n}(\bw_n)}\leq \sup_{n\in \mathbb{N}}{M_{\psi,h_n}(\bw_n)}<\infty$ and \eqref{eq:PiDGapproxmglobal} for $\psi^*$, by passing for $n\to \infty$, for every $\bX \in C^{\infty}_0 (\Omega')$, we find that
	\begin{align*}
		\big\langle
		\llbracket\bfw_n\otimes\bfn\rrbracket,\big\{\bfX-\Uppi_{h_n}^k
		\bX\big\}\big\rangle_{\smash{\smash{\Gamma_h^{i}}'\cup\Gamma_D'}} \le h_n c \big (m_{\psi^*,h_n}\big(\bX -\Uppi_{h_n}^k \bX\big) +
		m_{\psi,h_n}(\bw_n)\big )  
		\to 0\,.
	\end{align*}
	Thus, by passing for $n\!\to\! \infty$ in \eqref{eq:4a}, using
	\eqref{eq:4b} and \eqref{eq:weakpsi},~for~any~${\bX \in C^{\infty}_0 (\Omega')}$, we arrive at $(\bfH , \bX)_{\Omega'} = -  ( \bw , \divo \bX)_{\Omega'}$,
	i.e., $\bfH  =\nabla \bw$ in $\Omega'$ and, thus,
	$\bw\hspace*{-0.1em} \in\hspace*{-0.1em} W^{1,\psi}(\Omega')$. Since $\bw =\bfzero$ in
	$\Omega' \setminus \Omega$,~we~get~${\bw \hspace*{-0.1em}\in\hspace*{-0.1em}
		W^{1,\psi}_{\Gamma_D}(\Omega)}$.
	
	Inequality \eqref{eq:trace} and the reflexivity of
	$L^\psi(\Gamma_N)$ yield a not relabeled subsequence  and a function $\bg \in L^\psi (\Gamma_N)$
	such that 
	\begin{align}\label{eq:4c}
		\bw_n &\weakto \bfh \quad\text { in } L^\psi(\Gamma_N)\quad(n\to \infty)\,.
	\end{align}
	Similar arguments as above yield that for every $\bX \in C^{\infty} (\overline\Omega)$, we have that
	\begin{align*}
		\big (\boldsymbol{\mathcal G}_{h_n}^k\bw_n, \bX\big )_{\smash{\Omega}} &\!=\! - (
		\bw_n , \divo \bX)_{\Omega} \!+\! \big\langle
		\llbracket\bfw_n\otimes\bfn\rrbracket,\!\big\{\bfX-\Uppi_{h_n}^k
		\bX\big\}\big\rangle_{\smash{\smash{\Gamma_h^{i}} \cup \Gamma_D}} \!+\!\langle
		\bfw_n\otimes\bfn, \bX\rangle_{\Gamma_N} .
	\end{align*}
	Taking the limit with respect to $n\to \infty$ in this equality, we find that
	\begin{align*}
		(\nabla \bw, \bX )_{\Omega}
		&= - \big (
		\bw , \divo \bX )_{\Omega} +\langle
		\bfh\otimes\bfn, \bX\rangle_{\Gamma_N}
		\\
		&
		=(\nabla \bw, \bX)_{\Omega} +\langle
		(\bfh-\bw)\otimes\bfn, \bX\rangle_{\Gamma_N} \,.
	\end{align*}
	Choosing $\bX = \bz \otimes \bn$ for arbitrary $\bz \in C^\infty
	_0(\Gamma_N)$, we conclude that $\bfh=\bw$ in $L^{\psi}(\Gamma_N)$, which together with
	\eqref{eq:4c} proves \eqref{eq:weaktrace}.  
\end{proof}

\begin{samepage}

\begin{thebibliography}{10}

\bibitem{acerbi-fusco}
{\sc E.~Acerbi and N.~Fusco}, {\sl Regularity for minimizers of nonquadratic
  functionals: the case {$1<p<2$}}, J. Math. Anal. Appl. {\bf 140} (1989),
  no.~1, 115--135.

\bibitem{arnold-brezzi}
{\sc D.~N. Arnold, F.~Brezzi, B.~Cockburn, and L.~D. Marini}, {\sl Unified
  analysis of discontinuous {G}alerkin methods for elliptic problems}, SIAM J.
  Numer. Anal. {\bf 39} (2001/02), no.~5, 1749--1779.

\bibitem{PETSc19}
{\sc S.~Balay~et al.}, {\sl {PETS}c {W}eb page},
  \url{https://www.mcs.anl.gov/petsc}, 2019.

\bibitem{barliu}
{\sc J.~W. Barrett and W.~B. Liu}, {\sl Quasi-norm error bounds for the finite
  element approximation of a non-{N}ewtonian flow}, Numer. Math. {\bf 68}
  (1994), no.~4, 437--456.

\bibitem{Bar21}
{\sc S.~Bartels}, {\sl Nonconforming discretizations of convex minimization
  problems and precise relations to mixed methods}, Comput. Math. Appl. {\bf
  93} (2021), 214--229.

\bibitem{bdr-phi-stokes}
{\sc L.~Belenki, L.~C. Berselli, L.~Diening, and M.~R{\r u}{\v z}i{\v c}ka},
  {\sl On the {F}inite {E}lement Approximation of $p$-{S}tokes Systems}, SIAM
  J. Numer. Anal. {\bf 50} (2012), no.~2, 373--397.

\bibitem{BDK12}
{\sc L.~Belenki, L.~Diening, and C.~Kreuzer}, {\sl Optimality of an adaptive
  finite element method for the {$p$}-{L}aplacian equation}, IMA J. Numer.
  Anal. {\bf 32} (2012), no.~2, 484--510.

\bibitem{br-multiple-approx}
{\sc L.~C. Berselli and M.~R\r{u}\v{z}i\v{c}ka}, {\sl Natural second-order
  regularity for parabolic systems with operators having $(p,\delta)$-structure
  and depending only on the symmetric gradient}, Calc. Var. PDEs (2022), Paper
  No.~137.

\bibitem{BC15}
{\sc D.~Breit and A.~Cianchi}, {\sl Negative {O}rlicz-{S}obolev norms and
  strongly nonlinear systems in fluid mechanics}, J. Differential Equations
  {\bf 259} (2015), no.~1, 48--83.

\bibitem{BufOrt09}
{\sc A.~Buffa and C.~Ortner}, {\sl Compact embeddings of broken {S}obolev
  spaces and applications}, IMA J. Numer. Anal. {\bf 29} (2009), no.~4,
  827--855.

\bibitem{ern-p-laplace}
{\sc E.~Burman and A.~Ern}, {\sl Discontinuous {G}alerkin approximation with
  discrete variational principle for the nonlinear {L}aplacian}, C. R. Math.
  Acad. Sci. Paris {\bf 346} (2008), no.~17-18, 1013--1016.

\bibitem{CS16}
{\sc B.~Cockburn and J.~Shen}, {\sl A Hybridizable Discontinuous {G}alerkin
  Method for the $p$-Laplacian}, SIAM Journal on Scientific Computing {\bf 38}
  (2016), no.~1, A545--A566.

\bibitem{ern-book}
{\sc D.A. Di~Pietro and A.~Ern}, {\sl Mathematical aspects of discontinuous
  Galerkin methods}, Math\'ematiques \& Applications, vol.~69, Springer,
  Berlin, 2012.

\bibitem{die-ett}
{\sc L.~Diening and F.~Ettwein}, {\sl Fractional estimates for
  non-differentiable elliptic systems with general growth}, Forum Math. {\bf
  20} (2008), no.~3, 523--556.

\bibitem{die-kacanow}
{\sc L.~Diening, M.~Fornasier, R.~Tomasi, and M.~Wank}, {\sl A relaxed
  {K}a\v{c}anov iteration for the {$p$}-{P}oisson problem}, Numer. Math. {\bf
  145} (2020), no.~1, 1--34.

\bibitem{DKS13}
{\sc L.~Diening, C.~Kreuzer, and S.~Schwarzacher}, {\sl Convex hull property
  and maximum principle for finite element minimisers of general convex
  functionals}, Numer. Math. {\bf 124} (2013), no.~4, 685--700.

\bibitem{dkrt-ldg}
{\sc L.~Diening, D.~Kr\"oner, M.~R{\r u}{\v z}i{\v c}ka, and I.~Toulopoulos},
  {\sl A {L}ocal {D}iscontinuous {G}alerkin approximation for systems with
  $p$-structure}, IMA J. Num. Anal. {\bf 34} (2014), no.~4, 1447--1488.

\bibitem{dr-interpol}
{\sc L.~Diening and M.~R{\r u}{\v z}i{\v c}ka}, {\sl Interpolation Operators in
  {O}rlicz--{S}obolev Spaces}, Num. Math. {\bf 107} (2007), 107--129.

\bibitem{eb-liu}
{\sc C.~Ebmeyer and W.~B. Liu}, {\sl Quasi-Norm Interpolation Error Estimates
  for Finite Element Approximations of Problems with $p$--Structure}, Numer.
  Math. {\bf 100} (2005), 233--258.

\bibitem{EW13}
{\sc E.~Emmrich and A.~Wr\'{o}blewska-Kami\'{n}ska}, {\sl Convergence of a full
  discretization of quasi-linear parabolic equations in isotropic and
  anisotropic {O}rlicz spaces}, SIAM J. Numer. Anal. {\bf 51} (2013), no.~2,
  1163--1184.

\bibitem{Hun07}
{\sc J.~D. Hunter}, {\sl Matplotlib: A 2D graphics environment}, Computing in
  Science \& Engineering {\bf 9} (2007), no.~3, 90--95.

\bibitem{LW10}
{\sc A.~Logg and G.~N. Wells}, {\sl DOLFIN: Automated Finite Element
  Computing}, ACM Transactions on Mathematical Software {\bf 37} (2010), no.~2,
  1--28.

\bibitem{sip}
{\sc T.~Malkmus, M.~R{\r u}{\v z}i{\v c}ka, S.~Eckstein, and I.~Toulopoulos},
  {\sl Generalizations of {SIP} methods to systems with {$p$}-structure}, IMA
  J. Numer. Anal. {\bf 38} (2018), no.~3, 1420--1451.

\bibitem{Pal79}
{\sc G.~Palmieri}, {\sl Some inequalities for intermediate derivatives in
  {O}rlicz-{S}obolev spaces, and applications}, Rend. Accad. Sci. Fis. Mat.
  Napoli (4) {\bf 46} (1979), 633--652 (1980).

\bibitem{QS19}
{\sc W.~Qiu and K.~Shi}, {\sl Analysis on an {HDG} method for the
  {$p$}-{L}aplacian equations}, J. Sci. Comput. {\bf 80} (2019), no.~2,
  1019--1032.

\bibitem{ren-rao}
{\sc M.~M. Rao and Z.~D. Ren}, {\sl Theory of {O}rlicz spaces}, Monographs and
  Textbooks in Pure and Applied Mathematics, vol. 146, Marcel Dekker Inc., New
  York, 1991.

\bibitem{Ruf17}
{\sc A.~M. Ruf}, {\sl Convergence of a full discretization for a second-order
  nonlinear elastodynamic equation in isotropic and anisotropic {O}rlicz
  spaces}, Z. Angew. Math. Phys. {\bf 68} (2017), no.~5, Paper No. 118, 24.

\bibitem{r-cetraro}
{\sc M.~R{\r {u}}{\v{z}}i{\v{c}}ka}, {\sl Analysis of generalized {N}ewtonian
  fluids}, Topics in mathematical fluid mechanics, Lecture Notes in Math., vol.
  2073, Springer, Heidelberg, 2013, pp.~199--238.

\bibitem{dr-nafsa}
{\sc M.~R{\r u}{\v z}i{\v c}ka and L.~Diening}, {\sl Non--{N}ewtonian Fluids
  and Function \!Spaces}, \!Nonlinear \!Analysis, \!Function \!Spaces \!and \!Applications,
  \!Proceedings \!of \!{NAFSA}~\!2006~\!{P}rague,~\!vol.~\!8,~\!2007,~\!pp.~\!\mbox{95--144}.

\bibitem{zhang-scott}
{\sc L.~R. Scott and S.~Zhang}, {\sl Finite element interpolation of nonsmooth
  functions satisfying boundary conditions}, Math. Comp. {\bf 54} (1990),
  no.~190, 483--493.

\bibitem{zei-IIB}
{\sc E.~Zeidler}, {\sl Nonlinear functional analysis and its applications.
  {II}/{B}}, Springer, New York, 1990, Nonlinear monotone operators.

\end{thebibliography}
\def\cprime{$'$} \def\cprime{$'$} \def\cprime{$'$}
\ifx\undefined\bysame
\newcommand{\bysame}{\leavevmode\hbox to3em{\hrulefill}\,}
\fi

\end{samepage}

\end{document}